\def\twobytwo#1#2#3#4{\bigl(\begin{smallmatrix}#1&#2\\#3&#4\end{smallmatrix}\bigr)}
\def\rmA{\mathrm{A}}
\def\rmB{\mathrm{B}}
\def\rmC{\mathrm{C}}
\def\rmD{\mathrm{D}}
\def\SA{S_{\rmA}}
\def\SD{S_{\rmD}}
\def\sA{s_{\rmA}}
\def\sB{s_{\rmB}}
\def\sC{s_{\rmC}}
\def\sD{s_{\rmD}}
\def\sigmaA{\sigma_{\rmA}}
\def\sigmaB{\sigma_{\rmB}}
\def\sigmaC{\sigma_{\rmC}}
\def\sigmaD{\sigma_{\rmD}}
\def\tauA{\tau_{\rmA}}
\def\tauB{\tau_{\rmB}}
\def\tauC{\tau_{\rmC}}
\def\tauD{\tau_{\rmD}}
\def\Htilde{\widetilde{\H}}
\def\betabar{\bar{\beta}}
\def\deltabar{\bar{\delta}}
\def\Xst{X^{\mathrm{st}}}
\def\Yst{Y^{\mathrm{st}}}
\def\ftilde{\tilde{f}}
\def\chihat{\hat{\chi}}
\def\cpone{\C P^1}
\def\PGDM{\PG_{\!\mathrm{DM}}}
\def\GDM{\Gamma_{\!\mathrm{DM}}}
\def\PGDMsw{\PG_{\!\mathrm{DM}}^{\mathrm{\,sw}}}
\def\BDM{\BB^9_{\mathrm{DM}}}
\def\HDM{\H_{\mathrm{DM}}}
\def\LDM{L_{\mathrm{DM}}}
\def\Art{\mathop{\rm Art}\nolimits}
\def\Br{\mathop{\rm Br}\nolimits}
\def\Cox{\mathop{\rm Cox}\nolimits}
\def\Atilde{\tilde{A}}
\def\semidirect{\rtimes}
\let\iso\cong
\def\Aut{\mathop{\hbox{\rm Aut}}\nolimits}
\def\Out{\mathop{\hbox{\rm Out}}\nolimits}
\def\M{\EuScript{M}}
\def\reverse{\mathop{\hbox{\rm reverse}}\nolimits}
\def\cong{\equiv}
\def\orb{{\rm\scriptstyle orb}}
\def\piorb{\pi_1^\orb}
\def\PGL{\mathop{\rm PGL}\nolimits}
\def\w{\omega}
\def\zbar{\bar{z}}
\def\wbar{\bar{\w}}
\def\omegabar{\bar{\omega}}
\def\E{\EuScript{E}}
\def\Z{\mathbb{Z}}
\def\Q{\mathbb{Q}}
\def\F{\mathbb{F}}
\def\C{\mathbb{C}}
\def\H{\EuScript{H}}
\def\tensor{\otimes}
\def\sset{\subseteq}
\def\G{\Gamma}
\def\PG{P\G}
\def\thetabar{\bar{\theta}}
\def\ip#1#2{\langle#1\,{|}\,#2\rangle}
\def\bigset#1#2{\bigl\{#1\bigm|#2\bigr\}}
\def\spanof#1{\langle#1\rangle}
\def\U{{\rm U}}
\newcommand{\abs}[1]{\lvert #1 \rvert}
\newcommand{\BB}{\mathbb{B}}
\newcommand{\op}[1]{\operatorname{#1}}
\newtheorem{theorem}{Theorem}[section]
\newtheorem{lemma}[theorem]{Lemma}
\theoremstyle{remark}
\newtheorem{numberedremark}[theorem]{Remark}
\newtheorem*{remark}{Remark}
\newtheorem*{remarks}{Remarks}
\numberwithin{figure}{section}
\numberwithin{equation}{section}
\begin{document}
\title{The Deligne-Mostow 9-ball, and the monster}
\author{Daniel Allcock}
\thanks{Allcock supported by Simons
Foundation Collaboration Grant 429818.  Basak supported by
Simons Foundation Collaboration Grant 637005.}
\address{Department of Mathematics\\University of Texas, Austin}
\email{allcock@math.utexas.edu}
\urladdr{http://www.math.utexas.edu/\textasciitilde allcock}
\author{Tathagata Basak}
\address{Department of Mathematics, Iowa State University, Ames IA, 50011.}
\email{tathastu@gmail.com}
\urladdr{https://orion.math.iastate.edu/tathagat/}
\subjclass[2010]{ Primary: 20F36, 22E40; Secondary: 20D08}
\date{September 8, 2020}

\begin{abstract}
    The ``monstrous proposal'' of the first author is that 
    the quotient of a certain $13$-dimensional
    complex hyperbolic braid group,
    by the relations that its natural generators have order~$2$, is 
    the ``bimonster'' $(M\times M)\semidirect2$.  Here $M$ is the monster
    simple group.  We prove that this
    quotient is either the bimonster or~$\Z/2$.  
    In the process, we give new information about the isomorphism found by
    Deligne-Mostow, between
    the moduli space of $12$-tuples in~$\C P^1$ and a quotient 
    of the complex $9$-ball.  Namely, we identify
    which loops in the $9$-ball quotient correspond to the standard
    braid generators.
\end{abstract}

\maketitle
%

\section{Introduction}
\label{sec-introduction}

\noindent
This paper continues our pursuit of a conjectured relationship between
complex hyperbolic geometry and the largest sporadic finite simple group,
the monster~$M$ \cite{Basak-bimonster-1}\cite{Basak-bimonster-2}\cite{Allcock-Y555}\cite{AB-braidlike}\cite{AB-26gens}.  In \cite{Allcock-monstrous},
the first author conjectured a ``monstrous proposal'':
the quotient of a
certain complex hyperbolic braid group~$G$,
by some natural relations $S=1$,
coincides with the semidirect product
$(M\times M)\semidirect2$.
This group is called the bimonster, and the $\Z/2$ exchanges the monster
factors in
the obvious way. 
In this paper we prove the conjecture up to one other possibility: 
$G/S$ is 
either the bimonster or~$\Z/2$.

Before describing $G$, we 
recall the source of the ``complex hyperbolic
braid group'' terminology.
The classical $n$-strand braid group can be described as follows
\cite{Fox-Neuwirth}.
Start with $\C^n$, remove the mirrors (fixed-point sets) of the
reflections in the symmetric group $S_n$, quotient what remains
by the action of 
$S_n$, and then take the fundamental group.
There are many generalizations of this construction, got by
replacing $S_n$ by 
some
other reflection group 
and (optionally) replacing $\C^n$ by some other space.
Three examples are  Artin groups \cite{van-der-Lek}\cite{Charney-Davis},
the braid groups of the finite
complex reflection groups \cite{Bessis}, and the fundamental groups of
certain discriminant complements arising 
in singularity theory \cite{Brieskorn}\cite{Looijenga-triangle-singularities}.  
Complex reflections play a special role because
their mirrors  have real
codimension two: removing them leaves
a space which is connected but not simply-connected.

We apply this general construction to
a specific group $P\G$,
generated by complex reflections
acting on complex hyperbolic space $\BB^{13}$.
We will give a precise description of $P\G$ in 
section~\ref{sec-background-conventions-notation}, but the
details are not important
yet.  We define $\H$ as the union of the
mirrors of the complex reflections in $P\G$.  
The braid group we will study
is the orbifold fundamental group 
$$G=\piorb((\BB^{13}-\H)/P\G).$$  
A generic point of $\H$ has stabilizer generated by a
single triflection (order~$3$ complex reflection), 
so its image in $\BB^{13}/P\G$ has a neighborhood
which is smooth and 
whose intersection with $\H/P\G$ is a smooth hypersurface.
By a \emph{meridian} we
mean a small loop in $(\BB^{13}-\H)/\PG$
encircling this hypersurface once positively, or any loop
freely homotopic to such a loop.
The analogous definition for the 
classical braid group gives the conjugacy class containing
the standard generators.  
It is well-known that killing the squares of the standard generators
reduces the $n$-strand braid group to 
the symmetric group~$S_n$.
In this paper we study
the analogous quotient for~$G$:  

\begin{theorem}
    \label{ThmMain}
    The complex hyperbolic braid group
    $G=\piorb((\BB^{13}-\H)/P\G)$, modulo
    the subgroup~$S$ generated by the squares of all meridians,
    is isomorphic to either the bimonster or $\Z/2$.
\end{theorem}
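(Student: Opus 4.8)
The strategy is to translate the problem into combinatorial group theory via a presentation of $G$, and then compare $G/S$ to a known presentation of the bimonster. The plan is as follows.

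\emph{Step 1: Presentations of $G$ and $G/S$.} First I would produce an explicit, finite presentation of the complex hyperbolic braid group $G=\piorb((\BB^{13}-\H)/\PG)$. The Deligne-Mostow identification mentioned in the abstract is the key technical device here: the moduli space of $12$ points on $\C P^1$ is a quotient of $\BB^9$, and there is a natural way to relate the $\BB^{13}$ picture to this $\BB^9$ picture. Identifying which loops in the $9$-ball quotient correspond to the standard braid generators (the meridians) lets one import the classical theory of the fundamental group of configuration spaces of points on the sphere (sphere braid groups / mapping class groups of punctured spheres) to read off relations among the meridians. From such a presentation, $G/S$ has a presentation obtained by adjoining the relations $g^2=1$ for each standard generator $g$ (it suffices to impose this on one representative per meridian conjugacy class, since meridians are all conjugate up to the small number of $\PG$-orbits of mirror types).

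\emph{Step 2: Match against a Coxeter-type presentation of the bimonster.} The bimonster $(M\times M)\semidirect2$ has celebrated presentations as a quotient of Coxeter or Artin-type groups --- most famously the $Y_{555}$ presentation of Conway, and the presentation on $26$ involutions coming from the incidence graph of $\PP^2(\F_3)$ (compare \cite{Allcock-Y555}, \cite{AB-26gens}). I would show that the generators-and-relations data extracted in Step 1 matches, after killing squares, exactly the defining data of one of these bimonster presentations: the nodes of the relevant diagram correspond to the meridian classes, the braid relations of $G$ become the Coxeter relations (e.g. $gh=hg$ or $ghg=hgh$ depending on whether the corresponding mirrors are disjoint or meet, which is dictated by the $\BB^{13}$ geometry of $\PG$), and the extra ``spider'' or affine relation cutting the Coxeter group down to the bimonster must be seen to already hold in $G/S$ or else be the one piece of data that can fail.

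\emph{Step 3: The dichotomy.} This is where the two possibilities arise. After Steps 1--2 one has a surjection from a bimonster-presenting group onto $G/S$, so $G/S$ is a quotient of the bimonster. Since $M$ is simple, the normal subgroups of $(M\times M)\semidirect2$ are $1$, $M\times 1$, $1\times M$, $M\times M$, the diagonal-type subgroups, and preimages of subgroups of the quotient $\Z/2$; working out which of these can occur as the kernel --- using that the $\Z/2$ is visibly generated by images of meridians and is nontrivial in $G/S$ (one must check the abelianization of $G/S$ surjects onto $\Z/2$ but the presentation forbids it from being larger), while any normal subgroup containing one $M$ factor would force the images of certain meridians to collapse in a way incompatible with the braid relations unless \emph{everything} collapses to the quotient $\Z/2$ --- leaves exactly the two options: kernel trivial (bimonster) or kernel $M\times M$ (quotient $\Z/2$).

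\emph{Main obstacle.} The crux is Step 1 together with the first half of Step 2: rigorously extracting a usable presentation of $G$, which requires pinning down the Deligne-Mostow correspondence precisely enough to say \emph{which} loops are the meridians and how they braid. This is exactly the new input the abstract advertises, and it is where the real work lies; once the presentation is in hand, Steps 2 and 3 are a finite, if delicate, matching-and-bookkeeping exercise against the known bimonster presentations. A secondary difficulty in Step 3 is ruling out the ``diagonal $M$'' and other exotic quotients cleanly rather than by brute force --- this likely needs a structural argument using the symmetry of the presentation (the $\Z/2$ exchanging the two monster factors should be realized by an evident symmetry of the meridian diagram), so that any proper quotient is forced to be a quotient of that $\Z/2$.
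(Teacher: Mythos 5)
Your Steps 2 and 3 are the right overall shape — $G/S$ is to be shown a quotient of the bimonster, and simplicity of $M$ then forces the dichotomy — and this is indeed what the paper does via Conway--Simons. But there are two genuine gaps.

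First, the premise of your Step 1 is false and unneeded: the paper explicitly does \emph{not} produce a presentation of $G$. What is known (\cite{AB-26gens}, \cite{Basak-bimonster-2}) is only a surjection $\Art(P^2\F_3)\twoheadrightarrow G$ carrying standard generators to meridians; there may be further unknown relations. The real technical work of the paper is not ``import the sphere braid group into a presentation of $G$'' but the reverse: one builds a tubular neighborhood $U$ of the Deligne--Mostow $9$-ball $\BDM\subset\BB^{13}$, shows $(U-\H)/\PGDMsw$ fibers over $(\BDM-\HDM)/\PGDM\cong\M_{12}$ with fiber the $E_8^\E$ mirror complement, obtains the exact sequence $1\to\Br_5\to\piorb((U-\H)/\PGDMsw)\to\piorb(\M_{12})\to1$, and uses the explicit description of that extension to prove that the deflation word $\tau_0\cdots\tau_{10}(\tau_1\cdots\tau_{11})^{-1}$ already dies in $G$. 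This, fed into the Conway--Simons/Ivanov/Norton theorem that deflating $\Cox(P^2\F_3)$ gives the bimonster, is what yields the surjection bimonster $\twoheadrightarrow G/S$. Your proposal gestures at ``the extra spider/affine relation'' but supplies no mechanism for verifying it in $G$, and this is precisely where the content lies.

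Second, your Step 3 method for excluding the trivial quotient would not work. You propose to compute the abelianization from ``the presentation,'' but since extra unknown relations may hold in $G$, $G/S$ is only known to be \emph{some} quotient of the bimonster; an abelianization computation from the $\Cox(P^2\F_3)$-plus-deflation presentation tells you nothing about whether those unknown relations kill everything. The paper instead gives a direct geometric nontriviality argument (Lemma~\ref{LemGNontrivial}): using the weight-$4$ automorphic form $\Psi_0$ vanishing to order $1$ on $\H$, it constructs an explicit connected orbifold double cover of $(\BB^{13}-\H)/\PG$, whence $G/S\neq1$. Also, as a minor point, your list of normal subgroups of $(M\times M)\semidirect2$ is wrong: $M\times1$ and the diagonal are not normal (the former is moved by the swap, the latter is not normal even in $M\times M$); the normal subgroups are exactly $1$, $M\times M$, and the whole group, so the dichotomy is immediate once nontriviality is known.
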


Ruling out the
case $G/S\iso\Z/2$ would prove the monstrous
proposal.  

\smallskip
Although we do not know a presentation for~$G$,
we do know that it is a quotient of 
certain Artin group.    The Artin group $\Art(\Delta)$ 
of a graph $\Delta$ is defined as follows. 
It has
one generator for each node of the graph.  Its
defining relations are that two of these generators $x,y$
braid ($xyx=yxy$) or commute ($xy=yx$) according to
whether their nodes are joined or not.  
The associated Coxeter group $\Cox(\Delta)$ is defined
by imposing the additional relations that the squares
of the standard
Artin generators vanish.

We will write $P^2\F_3$ for the incidence graph of
the $13$ points and $13$ lines of the projective
plane over the field $\F_3$ 
of order~$3$.  The main result of \cite{AB-26gens}
is that $G$ is a quotient of
$\Art(P^2\F_3)$, with the standard Artin generators
mapping to meridians.

We will prove theorem~\ref{ThmMain} by finding that certain
elements of $\Art(P^2\F_3)$ vanish
in~$G$.  It follows that $G/S$ is the
quotient of the Coxeter group $\Cox(P^2\F_3)$
by some specific relations and possibly some additional
(unknown) relations.  The relations
we find are 
enough to reduce 
$\Cox(P^2\F_3)$ to the
bimonster.  
This relies on a result of Conway and Simons \cite{Conway-Simons}.
It follows that $G/S$
is a quotient of the bimonster.
By the simplicity of~$M$, the only quotients are the
bimonster itself, $\Z/2$ and the trivial group.  A short
automorphic forms argument
rules out the trivial case (lemma~\ref{LemGNontrivial}).

\medskip
While our result gives an upper bound for $G/S$, 
Looijenga \cite{Looijenga-monster} has found a construction
that suggests 
that $G/S$ is no smaller than the bimonster.  If $G/S$ were the bimonster,
then the covering space of 
$(\BB^{13}-\H)/\PG$ corresponding to~$S$ would have the bimonster as its
deck group.  Then, every open subset of $(\BB^{13}-\H)/\PG$ would
have an orbifold cover with this deck group, namely its preimage.  
Looijenga has found an open subset over which this hoped-for cover exists
unconditionally, and is connected.  Furthermore, 
this subset is large enough to resemble $(\BB^{13}-\H)/\PG$
itself.  

\medskip
Part of the appeal of the monstrous proposal is that it
suggests a geometric interpretation of
the Conway-Simons relations that collapse $\Cox(P^2\F_3)$
to the bimonster.   For each $12$-gon in $P^2\F_3$, these kill the
translation subgroup of the corresponding affine Coxeter group
$\Cox(\Atilde_{11})\iso\Z^{11}\semidirect S_{12}$, leaving just
 the symmetric group 
$\Cox(A_{11})\iso S_{12}$.  If $\tau_0,\dots,\tau_{11}$ are the
generators of $\Cox(\Atilde_{11})$, corresponding to the
nodes of the $12$-gon in cyclic order, then one can achieve
this by imposing the relations that $\tau_j\tau_{j+1}\cdots\tau_{j+10}$
is independent of~$j$.
These are exactly the relations required to collapse $\Art(\Atilde_{11})$
to the $12$-strand braid group.  (See theorem~\ref{ThmBraidGroups}.) 
This suggests that the Conway-Simons
relations are the shadows of some relations that hold
in~$G$, even before imposing the relations that meridians 
square to~$1$.   

This dovetails with the
fact the $12$-strand braid group is prominent in our braid group~$G$.
This happens because $\BB^{13}$ contains a 9-ball $\BDM$,
whose setwise stabilizer  acts on it as the group $\PGDM$ that
Deligne and Mostow used to uniformize the moduli
space of unordered $12$-tuples in $\cpone$.  
That is, writing $\HDM$ for the hyperplane arrangement in $\BDM$ got
by restricting~$\H$, $(\BDM-\HDM)/\PGDM$
is the moduli space $\M_{12}$ of unordered $12$-tuples of distinct points.
The connection to braids is that
$\piorb(\M_{12})\iso\Br_{12}(\cpone)/(\Z/2)$.
Our strategy will be
to choose a tubular neighborhood $U$ of $\BDM$, invariant
under the setwise stabilizer $\PGDMsw$ of $\BDM$.
It develops that $(U-\H)/\PGDMsw$ fibers over $(\BDM-\HDM)/\PGDM$, 
with
fiber that can be understood by work of Orlik and Solomon.
This leads to  an exact sequence
\begin{equation}
    \label{EqIntroExactSeq}
1\to\Br_5\to \piorb((U-\H)/\PGDMsw) \to \piorb(\M_{12})\to 1
\end{equation}
We work out the details of this (nonsplit)
group extension.  This yields several explicit relations in the Artin 
generators of~$G$.  This is done in sections \ref{SecNeighborhood}--\ref{SecMain}.
After the preliminary sections \ref{sec-background-conventions-notation}
and \ref{SecBraidGroups}, it is possible
to jump to section~\ref{SecNeighborhood}, armed only with the statements 
of theorems \ref{ThmBasepointChange} and~\ref{ThmDeligneMostow}.

Heckman \cite{Heckman} gave an argument that $G$ is a quotient of the
bimonster, but unfortunately it had a gap.  Our section~\ref{SecChange}
can be used to bridge that gap, bypassing some of the details
that we develop in this paper.  See remark~\ref{RkHeckmanArgument}
for Heckman's argument.   The largest difference from our work
is that he seeks extra relations in $G/S$, rather than
in $G$ itself, as we do.
Our fibration argument
also applies to some other ball quotients with distinguished 
subballs, yielding analogues
of the exact sequence \eqref{EqIntroExactSeq}.
For example, 
we hope to  give a new presentation of the orbifold fundamental
group of the moduli space space of smooth cubic surfaces, in
terms of  the Artin group of the Petersen graph.  

We are very grateful to Gert Heckman and Eduard Looijenga 
for conversations and
correspondence.

\section{Background, conventions, notation}
\label{sec-background-conventions-notation}

As far as possible we maintain notations and conventions used in 
\cite{AB-braidlike}, \cite{AB-26gens}.
For convenience 
we review the most important ones.  
We follow the convention that a
hermitian form $\ip{\cdot}{\cdot}$ is linear in its first variable and conjugate-linear
in its second.  
The norm of a vector~$x$ means $\ip{x}{x}$.
Often we use the same symbol for a vector and the point of
projective space it represents.  
A superscript  ${}^\perp$ indicates an orthogonal complement, the 
precise meaning depending on context.  It might mean a sublattice,
a linear subspace, or a complex hyperbolic space inside a larger
complex hyperbolic space.
When describing groups, we often use \cite{ATLAS} names,
and the numeral `2' as shorthand
for $\Z/2$, for example the semidirect product $\PGL_3(\F_3)\semidirect(\Z/2)$
becomes $L_3(3)\semidirect2$.

The only material here, not already in \cite{AB-26gens},
is the construction of lattices from graphs (section~\ref{subsec-eisenstein-lattices}),
the notation $L_4$ for the Eisenstein $E_8$ lattice, and the
Deligne-Mostow lattice (sections~\ref{subsec-DM-lattice}).

\subsection{Complex hyperbolic space}
\label{subsec-complex-hyperbolic-space}
A hermitian form is called Lorentzian if it has signature $(n,1)$.
For example, $\C^{n,1}$ denotes $\C^{n+1}$ equipped with the
hermitian form
\begin{equation*}
\ip{x}{y}=-x_0 \bar{y}_0+x_1\bar{y}_1+\dots+x_{n}\bar{y}_{n}.
\end{equation*}
Let $V$ be a complex vector space equipped with a Lorentzian hermitian form.
Then $\BB(V)$ means the set of negative-definite complex
lines in~$V$.  This is
an open ball in $P(V)$.  If $M$ is a subspace of $V$, such that
the restriction of the hermitian form is also Lorentzian, then there
is a natural inclusion $\BB(M)\to\BB(V)$.  
If $M$ has codimension~$1$,
then we sometimes call $\BB(M)$ a hyperplane in $\BB(V)$.
The complex ball $\BB(V)$ has a natural negative curvature metric called
the Bergman metric,
and is sometimes called complex hyperbolic space.  Its 
restriction to 
$\BB(M)$ coincides with the Bergman metric of~$\BB(M)$.
Suppose $v,w$ are negative-norm vectors in~$V$.  Then they represent points
of $\BB(V)$ whose distance is  
\begin{equation}
\label{eq-complex-hyperbolic-metric}
  d(v, w) = \cosh^{-1} \sqrt{  \frac{\abs{   \ip{v}{w}}^2}{v^2\, w^2} } 
\end{equation}
If in addition $\ip{v}{w}$ is real and negative, then the real
line segment joining
$v$ and $w$ in $V$ projects to the
geodesic in $\BB(V)$ between these points.
Similarly, if
$v,s\in V$ have negative and positive norm respectively, then  
\begin{equation}
\label{eq-distance-point-to-hyperplane}
  d\bigl(v, \BB(s^\perp)\bigr) = \sinh^{-1} \sqrt{ -\frac{\abs{   \ip{v}{s}}^2}{v^2\, s^2} } 
\end{equation}

\subsection{Eisenstein lattices}
\label{subsec-eisenstein-lattices}
Let $\omega = e^{2 \pi i/3}$ and $\theta = \omega - \wbar =
\sqrt{-3}$. Let $\E$ be the ring $\Z[\omega]$ of Eisenstein integers.
An Eisenstein lattice $K$ means 
a free $\E$-module equipped with an hermitian form $\ip{\;}{\;} : K \times K \to
\Q(\w)$.
We abbreviate $K\tensor_\E\C$ to $K\tensor\C$, and usually think of
it as containing $K$.  If $K$ is Lorentzian, then we write $\BB(K)$
for $\BB(K\tensor\C)$.
If $K$ is
nondegenerate, then its dual lattice is defined as $K^* = \lbrace
x \in K \otimes \C \colon \ip{x}{k} \in \E \text{\; for all \;} k \in
K \rbrace$.  

Let $\Delta$ be a directed graph without self-loops or multiple edges.
Consider the free $\E$-module $\E  \Delta $ on the vertex set $\Delta$
with basis vectors $\lbrace e_{\alpha} \colon \alpha \in \Delta \rbrace$.
Define an $\E$-valued hermitian form on $\E  \Delta$  by
\begin{equation*}
\ip{e_{\alpha}}{e_{\beta}}=
\begin{cases}
3 & \text{\; if \;} \alpha = \beta \\
\theta &  \text{\; if $\Delta$ has an edge from $\beta$ to~$\alpha$}
     \\
    0 &  \text{\; if $\Delta$ has no edge between $\beta$ and $\alpha$}
\end{cases}
\end{equation*}
If $\Delta$ is a tree, then (by changing the signs of some of the basis
vectors) the isometry class of
this lattice depends only on the undirected graph
underlying $\Delta$.  

\subsection{\texorpdfstring{The Eisenstein $E_8$ lattice and the hyperbolic 
cell.}{The Eisenstein E8 lattice and the hyperbolic cell}}
\label{subsec-E8}
Applying this construction to 
the  Dynkin diagram $A_4$ yields the
$\E$-lattice whose underlying real form, under the
bilinear form $\tfrac{2}{3} \op{Re} \ip{x}{y}$,
is the $E_8$ root lattice.  We called it $E_8^\E$ in
\cite{AB-26gens}, but in this paper we will call it
$L_4$. 
The self-duality of the $E_8$ lattice leads to the property 
$\theta L_4^* = L_4$.  A useful consequence
of this is that if $K$ is an $\E$-lattice 
in which all inner products are divisible by~$\theta$,
then every copy of $L_4$ in~$K$ is an orthogonal direct summand.

The property $\theta K^*=K$ of an Eisenstein lattice~$K$
turns out to characterize~$K$ if $K$ is indefinite
\cite{Basak-bimonster-1}.  
Two
examples are the lattices $L$ and~$\LDM$ below.  Another is the
hyperbolic cell, which means the 
$\E$-span of two null vectors with inner product~$\theta$.

\subsection{\texorpdfstring{The lattice $L$}{The  lattice L}}
\label{t-P2-F3-model-of-L}
The following lattice $L$ plays a central role in this paper.
The quickest way to define it is to
refer to the uniqueness property just stated.
Namely, $L$ is  the unique $\E$-lattice
of signature $(13,1)$ that satisfies $\theta L^*=L$.  The
quickest concrete construction is to define 
$L$ as the sum of a hyperbolic cell and $3$ copies
of $L_4$.  In \cite{AB-braidlike} and \cite{AB-26gens}
we also used a different direct sum description of~$L$,
with the Leech lattice in place of $L_4^3$, but this plays
no role in this paper.  The only $13$-ball we will discuss in this
paper is $\BB(L)$, so we will write $\BB^{13}$ for it. 
All complex hyperbolic geometry in this paper takes place
in $\BB^{13}$. 

We will use the following model of~$L$ for most calculations.
It
was implicit in \cite{Basak-bimonster-1} (eq. 25 in proof of
prop. 6.1) and made  explicit in \cite{Allcock-Y555}.
We write $P^2 \F_3$ for the incidence graph of the finite projective plane 
over $\F_3$. 
It has $26$ vertices, corresponding
to the $13$ points and the
$13$ lines of this finite projective plane.  Whenever $\beta$
is a line, and $\alpha$ is a point on it, we direct the
edge  from the line to the point.
One verifies that the associated rank $26$ lattice (see \ref{subsec-eisenstein-lattices})
has radical of rank $12$.  One may define $L$ as  the quotient by 
the kernel.  Obviously all inner products
are divisible by~$\theta$, ie $L\sset\theta L^*$,
and one can check that this inclusion is equality.

It is possible to label the points and lines of $P^2\F_3$
by $p_1,\dots,p_{13}$ and $l_1,\dots,l_{13}$, such that 
the points on $l_j$ are $p_j$, $p_{j+1}$, $p_{j+3}$ and $p_{j+9}$.
Subscripts here should be read mod~$13$.  
We use the same symbols for the vectors in~$L$ corresponding to
the points and lines.  Because they are roots in the sense
of section~\ref{subsec-roots-in-general}, we call them the point- and line-roots.
Since  points (resp.\ lines) are not joined to each other, the point-roots
(resp.\ line-roots)
are mutually orthogonal.  
One may introduce coordinates
$(x_0;x_1,\dots,x_{13})$ on $\C^{13,1}$ such that 
$p_1=(0;\theta,0,\dots,0)$, 
$l_1=(1;1,1,0,1,0,0,0,0,0,1,0,0,0)$, and
rightward cyclic permutation of the last $13$ coordinates 
increases
subscripts by~$1$.

We write $\Gamma$ for the isometry group of~$L$.    Obviously
it contains the group $L_3(3):=\PGL_3(\F_3)$, permuting the
points and lines of $P^2\F_3$ in the natural way.  
There is additional symmetry.  From 
an incidence-preserving exchange of points with
lines, one can construct an isometry of $L$ that sends the point-roots
to line-roots and the line-roots to negated point-roots. 
Together with scalars and
$L_3(3)$, this generates a subgroup 
of $\Gamma$ whose image in $P\Gamma$ is $L_3(3)\semidirect2$.
We will
use this $L_3(3)\semidirect2$ many times.

\subsection{\texorpdfstring{Roots, mirrors and the 
hyperplane arrangement $\H$}{Roots, mirrors
and the hyperplane arrangement}}

\label{subsec-roots-in-general}
A root of $L$ means a lattice vector of norm~$3$.  
Roots are special because their triflections (complex reflection
of order~$3$) give elements of~$\Gamma$.
Namely, if $s$ is a root, then 
we define $\omega$-reflection in $s$ to be
the isometry of $L\tensor\C$ that fixes $s^{\bot}$ pointwise and
multiplies $s$ by the cube root of unity $\omega$.  A formula is
$
x\mapsto x-(1-\w)\frac{\ip{x}{s}}{s^2}s
$.
Using
$L\sset\theta L^*$, one can show that this preserves $L$.  
Although we don't need it, we remark that $\Gamma$ is generated by the
triflections in the $26$ point- and line-roots;
see \cite{Basak-bimonster-1} or \cite{Allcock-Y555}.

The hyperplane 
$\BB(s^\perp)\sset\BB(L)$ is called the mirror of~$s$, the name
reflecting the fact that it is the fixed-point set of a reflection.
We use the word mirror exclusively for hyperplanes orthogonal to roots.
The union of all
mirrors of $L$ is called $\H$.  This hyperplane arrangement is central
to the paper: our goal is to understand the orbifold fundamental
group of $\bigl(\BB^{13}-\H\bigr)/P\Gamma$.

\subsection{\texorpdfstring{Special points in $\BB^{13}$}{Special points in B13}}
\label{subsec-point-roots-etc}

We have already introduced the point-roots $p_1,\dots,
\discretionary{}{}{}p_{13}$
and line-roots $l_1,\dots,l_{13}$.  We call their
mirrors the point- and line-mirrors.
The $p_i$ resp.\ $l_i$ are mutually orthogonal, and
we write $p_\infty$ resp.\ $l_\infty$
for the point of $\BB^{13}$ orthogonal to
all of them. This turns out
to be represented by the norm~$-3$ vector
$p_\infty=(\thetabar;0,\dots,0)$
resp.\ $l_\infty=(4;1,\dots,1)$.

A convenient basepoint for the orbifold fundamental group
(see section~\ref{subsec-orbifold-fundamental-group})
of $(\BB^{13} - \H)/P\Gamma$ is 
the midpoint $\tau$ of
the geodesic segment joining $p_\infty$ and $l_\infty$.  It is represented by
the vector 
\begin{equation*}
\tau = l_{\infty} + i p_{\infty}=(4+\sqrt3;1^{13})
\end{equation*}
of norm $-6-8\sqrt3$.  The corresponding point of
$\BB^{13}$ is the unique fixed point of $L_3(3)\semidirect2\sset
P\Gamma$.  The mirrors closest to $\tau$ are exactly
the $26$ point- and line-mirrors.  (See
\cite[prop.~1.2]{Basak-bimonster-1}, where $\tau$ was called
$\bar{\rho}$, or \cite[Lemma A.5]{AB-26gens}).
Two consequences of this are that no mirrors pass through $\tau$, and
that $L_3(3)\semidirect2$ is the full $\PG$-stabilizer of $\tau$.  

\subsection{\texorpdfstring{The Deligne-Mostow lattice $\LDM$}{The Deligne-Mostow Lattice}}
\label{subsec-DM-lattice}
The lattice $\LDM$ is both a sublattice of~$L$ and a lower-dimensional
analogue of~$L$.  We start with its role as an analogue.  Consider the
$\Atilde_{11}$ Dynkin diagram (a $12$-gon),
with its edges' orientations alternating.
The corresponding lattice (section~\ref{subsec-eisenstein-lattices}) has $2$-dimensional nullspace,
and $\LDM$ is defined as the quotient by it. 
(See \cite[Sec.\ 5]{Allcock-Inventiones},
which also displays an isometry between $\LDM$ and the sum of a hyperbolic
cell and two copies of~$L_4$.)
$\LDM$ has signature
$(9,1)$ and satisfies $\theta\LDM^*=\LDM$.  We define $\BDM$ as $\BB(\LDM)$,
and $\GDM$ as the
isometry group of~$\LDM$. 
See section~\ref{SecDeligneMostow} for a refinement
of the famous relationship found by Deligne-Mostow,
between $\BDM/\PGDM$
and the moduli space of unordered $12$-tuples in $\cpone$.  
By construction, $\PGDM$ contains a dihedral group $D_{24}$, of
order~$24$, that permutes the vertices of the $12$-gon.  
(Because of the orientations
of edges, one must insert some signs to get isometries from 
permutations, just as when we enlarged $L_3(3)$ to $L_3(3)\semidirect2$.)
We write $\rho$ for the unique fixed point in $\BDM$ of this 
dihedral group.  We define roots as for~$L$, and define $\HDM\sset\BDM$ as
the union of the mirrors of the roots of~$\LDM$.  The mirrors closest 
to~$\rho$
are those whose roots correspond to the nodes of the $12$-gon
(lemma~\ref{LemMirrorsNearRho}).

$\LDM$ appears as a sublattice of~$L$
because the graph $P^2\F_3$ contains a $12$-gon (with suitably oriented
edges).  
Because $\LDM=\theta\LDM^*$ and all inner products in~$L$ are divisible
by~$\theta$, $\LDM$ is a summand, so $\GDM\sset\Gamma$.
We make a particular choice of $12$-gon in section~\ref{SecChange}.
The $(L_3(3)\semidirect2)$-stabilizer of this $12$-gon is the $D_{24}$ just
mentioned.  The projection of $\tau\in\BB^{13}$ to $\BDM$
is the point~$\rho$.  Part of our work will involve
moving a basepoint along $\overline{\rho\tau}$. We prove in lemma~\ref{LemMirrorsMeetingBDM}
that $\HDM$ is the restriction of $\H\sset\BB^{13}$ to $\BDM$.

\subsection{Orbifold fundamental groups.}
\label{subsec-orbifold-fundamental-group}
Let $A$ be a group acting properly discontinuously 
on a path connected manifold $X$.
Choose a base point $b \in X$.
The orbifold fundamental group 
$\piorb(X/A,b)$
 is defined as the following set of equivalence classes of
pairs 
$(\gamma,g)$, where $g\in A$ and $\gamma$ is a path in $X$
from $b$ to $g b$.  One such pair is equivalent to another
one $(\gamma',g')$ if $g=g'$ and $\gamma$ and $\gamma'$ are homotopic
in $X$, rel endpoints.    The group operation is 
\begin{equation*}
    (\gamma,g)\cdot(\gamma',g')=(\gamma \hbox{ followed by } g\circ\gamma' ,g g')
\end{equation*}
Inversion in $\piorb(X/A, b)$ is given by 
$(\gamma,g)^{-1}=(g^{-1}\circ\reverse(\gamma),g^{-1})$.
Projection of $(\gamma,g)$ to $g$ defines a homomorphism $\piorb(X/A,b) \to A$.
It is surjective because $X$ is path-connected.
The kernel is obviously $\pi_1(X, b)$, yielding the exact sequence
\begin{equation}
\label{eq-exact-sequence-on-pi-1}
1\to\pi_1(X,b) \to  \piorb(X/A, b) \to A \to1.
\end{equation}
The local group at~$b$ means the set of $(\gamma,g)\in\piorb(X/A,b)$ 
for which $\gamma$ is homotopic to 
the constant path at~$b$.  This is obviously
the same as the $A$-stabilizer of~$b$.

\subsection{Meridians.}

Meridians are distinguished elements of $\piorb((\BB^{13}-\H)/\PG,b)$ 
or $\piorb((\BDM-\HDM)/\PGDM,b)$,
where
$b\in\BB^{13}-\H$ or $\BDM-\HDM$ is a basepoint.  
If $s$ is a root and $S$ is the $\omega$-reflection in~$s$,
then the corresponding meridian
$M_{b,s}$ is defined as $(\mu_{b,s},S)$ where $\mu_{b,s}$
is the following path.
Let $p$ be the projection of~$b$ into the mirror $s^\perp$.
In this paper, $\overline{bp}$ never meets any other mirror.
Choose a ball around~$p$ small enough to miss all other mirrors,
and let $q$ be a point of $\overline{bp}-\{p\}$ in this ball.
Then $\mu_{b,s}$ is the geodesic $\overline{bq}$, followed by
the positive circular arc in $\BB^1(b,q)$, of angle $2\pi/3$
centered at~$p$, followed by $S(\overline{qb})$.

The extra generality about meridians that we developed in
\cite{AB-braidlike}
is not needed here.  In particular, 
no detours of the sort considered there
are needed.
The only basepoints that we will need are $\rho\in\BDM$ for
$\piorb((\BDM-\HDM)/\PGDM)$,
and a variable point $\sigma\in\overline{\rho\tau}-\{\rho\}\sset\BB^{13}$
when working with $\piorb((\BB^{13}-\H),\PG)$.
Once one of these basepoints is fixed, 
the meridians associated to the point- and line-roots are called the
point- and line-meridians.  No other meridians appear in this paper.

\section{Braid Groups}
\label{SecBraidGroups}

\noindent
In this section we assemble some well-known material about braid groups,
in a form exhibiting dihedral symmetry $D_{2n}$ where $n$ is the number
of strands, which will remain constant in this section.
The $n=12$ case will be key in later sections.  Although our presentations
are slightly different, \cite{Birman} is an excellent general reference.

For a Riemann surface $\Sigma$, we define $X(\Sigma)$ as $\Sigma^n$, 
the space of ordered $n$-tuples in~$\Sigma$.  In this paper
we restrict attention to  the
cases $\Sigma=\C,\C^*,\cpone$. 
The $n$-strand pure braid space of  $\Sigma$ means 
$$
X^\circ(\Sigma)
:=
\bigset{(x_0,\dots,x_{n-1})\in X(\Sigma)}{
    \hbox{$x_j\neq x_k$ whenever $j\neq k$}}
$$
The symmetric group $S_n$ acts freely on $X(\Sigma)$ 
and $X^\circ(\Sigma)$, with 
quotients which will be written
$Y(\Sigma)$ and $Y^\circ(\Sigma)$.
The latter 
is called the $n$-strand braid space of~$\Sigma$, and
its fundamental group is called the $n$-strand braid
group of~$\Sigma$, written $\Br_n(\Sigma)$.
The ordinary braid group $\Br_n$ refers  to the case $\Sigma=\C$.
Setting $\zeta=e^{2\pi i/n}$, 
we will use the
tuple
$T=(1, \zeta,\zeta^2,\dots,\zeta^{n-1})$ as the basepoint
of $X^\circ(\Sigma)$, and its image in $Y^\circ(\Sigma)$
as the basepoint there.
Usually one specifies a loop in $Y^\circ(\Sigma)$ by writing down
a path in $X^\circ(\Sigma)$ from $T$ to one of its $S_n$-images.

We now define specific braids $\rho_j\in\Br_n(\C^*)$, whose subscripts  
should be read modulo~$n$,
by specifying the motion of points of $T$.  
Informally: 
the points beginning at 
$\zeta^{j-1}$
and $\zeta^j$ approach each other, move around each other
in a counterclockwise direction, and then continue 
to $\zeta^j$ and $\zeta^{j-1}$ respectively.  
The remaining points do not move.
For a precise
definition, choose
$r:[0,1]\to[1,\infty)$ continuous
with $r(0)=r(1)=1$ and $r(\frac12)>1$.
Then $\rho_j$ is the element of $\Br_n(\C^*)$ represented by
the path $(x_0(t),\dots,x_{n-1}(t))$ in $X^\circ(\C^*)$, where
\begin{align*}
    x_{j-1}(t)&{}=\zeta^{j-1}r(t)e^{2\pi i t/n}
    \\
    x_{j}(t)&{}=\zeta^{j}r(t)^{-1}e^{-2\pi i t/n}
    \\
    x_{k}(t)&{}=\zeta^k\qquad\qquad\qquad\qquad\hbox{ if $k\neq j-1,j$}
\end{align*}
We define the ``increasing'' and ``decreasing'' words
\begin{equation*}
    I_j{}=\rho_j\rho_{j+1}\cdots\rho_{j+n-2}
    \qquad
    D_j{}=\rho_j\rho_{j-1}\cdots\rho_{j-n+2}
\end{equation*}
(Following our conventions for orbifold fundamental groups in
section~\ref{subsec-orbifold-fundamental-group},
$I_j$ means $\rho_j$ followed by $\rho_{j+1}$,
followed by etc., and similarly for $D_j$.  Because $S_{12}$
acts freely on~$X^\circ$, we have identified elements of 
$\piorb(Y^\circ)=\piorb(X^\circ/S_n)$
with their underlying paths.)
Picture-drawing establishes
\begin{align}
    \label{EqIIncreases}
    I_j\rho_k I_j^{-1}&{}=\rho_{k+1}\hbox{ for all $k\neq j-1,j-2$}
    \\
    \label{EqDDecreases}
    D_j\rho_k D_j^{-1}&{}=\rho_{k-1}\hbox{ for all $k\neq j+1,j+2$}  
\end{align}
The inclusions 
$\C^*\to\bigl(\C^*\cup\{\hbox{$0$ or $\infty$}\}\bigr)\to\cpone$ 
induce homomorphisms
$$
\Br_n(\C^*)
\to
\Br_n\bigl(\C^*\cup\{\hbox{$0$ or $\infty$}\}\bigr)
\to
\Br_n(\cpone).
$$ 
Sometimes we speak of the $\rho_j$ as though they were
elements of these other groups, meaning their images there.

\begin{theorem}[Braid Groups]\leavevmode
    \label{ThmBraidGroups}
    \begin{enumerate}
        \item
            \label{ItemAtildeRelations}
            The subgroup of $\Br_n(\C^*)$ generated by 
            $\rho_0,\dots,\rho_{n-1}$ has defining relations
            $\rho_j\rho_k\rho_j=\rho_k\rho_j\rho_k$ and
            $\rho_j\rho_k=\rho_k\rho_j$, according to whether
            $k\in\{j\pm1\}$ or not.
        \item
            \label{ItemAdjoinZeroRelations}
            Adjoining to \eqref{ItemAtildeRelations} 
            the relations that all 
            $D_j$ coincide yields $\Br_n(\C^*\cup\{0\})$.
            Then $D\rho_k D^{-1}=\rho_{k-1}$ for all~$k$, where
            $D$ is the common image of all $D_j$.
        \item
            \label{ItemAdjoinInftyRelations}
            Adjoining to \eqref{ItemAtildeRelations} 
            the relations that all 
            $I_j$ coincide yields $\Br_n(\C^*\cup\{\infty\})$.
            Then $I\rho_k I^{-1}=\rho_{k+1}$ for all~$k$, where
            $I$ is the common image of all $I_j$.
        \item
            \label{ItemAdjoinBothRelations}
            Adjoining to \eqref{ItemAtildeRelations},
            \eqref{ItemAdjoinZeroRelations} and
            \eqref{ItemAdjoinInftyRelations} the relation
            $ID=1$ yields $\Br_n(\cpone)$.
    \end{enumerate}
\end{theorem}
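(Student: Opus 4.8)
The plan is to verify that each relation asserted in (1)--(4) actually holds, by elementary braid-diagram manipulations in the spirit of \eqref{EqIIncreases} and \eqref{EqDDecreases}, and then to see that they are complete by comparing the resulting presentations with the known presentations of the braid groups involved, for which \cite{Birman} is a general reference. Throughout I take $n\ge3$; the cases $n\le2$ are trivial or degenerate and are checked directly.

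The relations are read off from pictures: two of the $\rho_j$ whose pairs of moving strands share a point must braid, while those whose pairs are disjoint must commute; in $\Br_n(\C^*\cup\{0\})$ a strand may be dragged freely across the filled-in point $0$, which is exactly what makes the ``decreasing'' paths $D_j$ --- each sweeping the inside of the unit circle, starting at a different point --- all isotopic; symmetrically, filling in $\infty$ makes the ``increasing'' paths $I_j$ all isotopic; and on $\cpone$ a strand dragged once around the sphere and back is null-isotopic, which is the relation $ID=1$. Granting this, \eqref{EqDDecreases} (resp.\ \eqref{EqIIncreases}) upgrades ``all $D_j$ equal'' (resp.\ ``all $I_j$ equal'') to ``$D\rho_kD^{-1}=\rho_{k-1}$ for every $k$'' (resp.\ ``$I\rho_kI^{-1}=\rho_{k+1}$ for every $k$''), by choosing $j$ to dodge the two exceptional indices. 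The remaining, and real, content is completeness.

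For (1): the subgroup $\langle\rho_0,\dots,\rho_{n-1}\rangle\le\Br_n(\C^*)$ is the Artin group $\Art(\Atilde_{n-1})$. The homomorphism $\Br_n(\C^*)\to\Z$ given by the winding number of the product $x_0\cdots x_{n-1}$ kills every $\rho_j$, and $\langle\rho_j\rangle$ is exactly its kernel with the stated presentation --- this is part of the well-known description of the annular braid group, which may either be quoted or recovered from the Fadell--Neuwirth fibration of the configuration spaces $X^\circ(\C^*)$ by Reidemeister--Schreier. For (2): $\C^*\cup\{0\}=\C$, so the target is $\Br_n$, and the inclusion $\C^*\hookrightarrow\C$ exhibits $Y^\circ(\C^*)$ as the complement, in the smooth complex manifold $Y^\circ(\C)$, of a smooth irreducible hypersurface $W_0$ (``some strand sits at $0$''); hence $\Br_n(\C^*)\to\Br_n$ is onto with kernel the normal closure of a meridian $\delta_0$ of $W_0$, and since $\delta_0$ has winding number $1$ the subgroup $\langle\rho_j\rangle$ still surjects onto $\Br_n$. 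It then remains to identify the kernel of $\langle\rho_j\rangle\to\Br_n$ with the normal closure of the relators ``all $D_j$ equal'': concretely, after matching $\rho_1,\dots,\rho_{n-1}$ with standard generators and $\rho_0$ with the appropriate band generator, one verifies by Tietze transformations that the presentation in (2) collapses to the classical one for $\Br_n$. Statement (3) is then proved symmetrically, or deduced from (2) via the holomorphic involution $z\mapsto1/z$ of $\cpone$, which preserves $\C^*$ and exchanges the punctures $0$ and $\infty$, hence exchanges the $D_j$ with the $I_j$ up to relabeling.

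For (4): since $\cpone=\C^*\cup\{0\}\cup\{\infty\}$, the same argument presents $\Br_n(\cpone)$ as $\Br_n(\C^*)$ modulo the normal closure of meridians $\delta_0,\delta_\infty$ of the two smooth hypersurfaces $W_0,W_\infty$ in $Y^\circ(\cpone)$. Neither $\delta_0$ nor $\delta_\infty$ lies in $\langle\rho_j\rangle$ --- they have winding numbers $+1$ and $-1$ --- but their winding-zero product $\delta_0\delta_\infty$ does, and it is represented, up to conjugacy and inversion, by $ID$; this is the geometric reason the extra relation $ID=1$ is needed in (4) beyond the relations of (2) and (3). So the plan is to show that intersecting the normal closure of $\{\delta_0,\delta_\infty\}$ with $\langle\rho_j\rangle$ yields exactly the relators of (2) and (3) together with $ID=1$ --- equivalently, to adjoin $ID=1$ to the presentation from (2) and (3) and match the result against Fadell--Van Buskirk's presentation of $\Br_n(\cpone)=\Br_n(S^2)$ by Tietze transformations. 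The step I expect to be the main obstacle is precisely this bookkeeping: writing the $\rho_j$, above all $\rho_0$, and the puncture-meridians explicitly in terms of standard braid generators, and carrying out the Tietze reductions without losing or gaining relations, while tracking which products of the $\delta_q$ actually lie in $\langle\rho_j\rangle$ --- which is what forces the asymmetry between (2), (3) (one new family of relators each) and (4) (the single extra relation $ID=1$).
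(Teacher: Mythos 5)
Your overall framework — the fibration of $X^\circ(\C^*)$ for part (1), Van Kampen for (2), an involution exchanging $0$ with $\infty$ to get (3) from (2), and Van Kampen again for (4) — is the same as the paper's, and your computation of winding numbers correctly locates where each relator lives. But the actual content of (2) and (4) is the identification of the kernel of $\langle\rho_j\rangle\twoheadrightarrow\Br_n(\C)$ (resp.\ $\Br_n(\cpone)$) with the normal closure of precisely the relators stated, and at exactly that point your proof stops and says ``one verifies by Tietze transformations.'' You even flag it yourself as ``the main obstacle'' in (4). That step is not a formality; a head-on Tietze reduction requires expressing $\rho_0$ as a band in the standard generators and then checking that ``all $D_j$ coincide'' generates the same normal subgroup — nontrivial bookkeeping which you have not done. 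So as written there is a genuine gap.

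The paper bypasses that bookkeeping with a cleaner device worth noting. Having established $\Br_n(\C^*)\iso\Art(\Atilde_{n-1})\semidirect\langle t\rangle$ in (1) (with $t\rho_kt^{-1}=\rho_{k-1}$), it observes that killing the Van Kampen meridian $D_{n-1}t^{-1}$ is the \emph{same} as imposing, inside $\Art(\Atilde_{n-1})$, the relations ``$D_{n-1}$ conjugates each $\rho_k$ exactly as $t$ does,'' i.e.\ $D_{n-1}\rho_kD_{n-1}^{-1}=\rho_{k-1}$. From there one never reaches for a known presentation of $\Br_n$ at all: iterated self-conjugation of $D_{n-1}$ simultaneously produces $D_{n-2},D_{n-3},\dots$ and $D_{n-1},D_{n-1},\dots$, showing all $D_j$ are equal; and conversely, starting from even one equality $D_j=D_{j'}$, the relations \eqref{EqDDecreases} (supplemented by one picture for the adjacent case) recover $D\rho_kD^{-1}=\rho_{k-1}$ for every $k$. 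This closed loop of implications is exactly the missing substance in your argument. The same remark applies to (4): adding $ID=1$ and using that $D$ centralizes $I$ forces the two families of relations (2) and (3) to imply each other, with no matching against Fadell--Van Buskirk needed. Finally, a minor point on (3): you propose the holomorphic involution $z\mapsto1/z$, which permutes the basepoint and so requires a relabeling; the paper uses the anti-holomorphic inversion $z\mapsto1/\bar z$, which fixes $T$ pointwise, preserves $t$, sends each $\rho_k\mapsto\rho_k^{-1}$, and exchanges $D_{n-1}\leftrightarrow I_1^{-1}$, making the deduction of (3) from (2) immediate. Your variant can be made to work, but the paper's is the tidier choice.
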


\begin{remarks}
    The proof of \eqref{ItemAdjoinZeroRelations}
    shows that adjoining to \eqref{ItemAtildeRelations}
    any single relation $D_j=D_{j'}$,
    with $j\neq j'$,
    implies the equality of all~$D_j$. And
    similarly for
    \eqref{ItemAdjoinInftyRelations}.  

    The proof of \eqref{ItemAdjoinBothRelations} shows:
    in the presence of the relations \eqref{ItemAtildeRelations}
    and any
    one relation $I_kD_j=1$, the relations in
    \eqref{ItemAdjoinZeroRelations}
    imply those in \eqref{ItemAdjoinInftyRelations} and vice-versa.
    So $\Br_n(\cpone)$ can be got by adjoining
    two relations to $\Art(\Atilde_{n-1})$, for example
    $I_0=I_1$ and $D_{n-1}I_1=1$.  

    In particular, one could omit either \eqref{ItemAdjoinZeroRelations}
    or \eqref{ItemAdjoinInftyRelations} from the presentation
    \eqref{ItemAdjoinBothRelations}    of $\Br_n(\cpone)$.
    (This omission
    would leave one of $I$ and $D$ undefined, which is 
    why we write $I_kD_j=1$ not $ID=1$.)
    We prefer to keep both \eqref{ItemAdjoinZeroRelations}   
    and \eqref{ItemAdjoinInftyRelations},  
    because 
    in theorem~\ref{ThmPi1ofUmodStabilizer} we will meet an
    extension of $\piorb(\M_{12})\iso\Br_{12}(\cpone)/(\Z/2)$ 
    to which these relations lift
    but the relation $ID=1$ does not.
\end{remarks}

\begin{proof}
    \eqref{ItemAtildeRelations}
    The map $(x_0,\dots,x_{n-1})\mapsto x_0\cdots x_{n-1}$
    fibers $X^\circ(\C^*)$ over $\C^*$.
    This descends to a fibration $Y^\circ(\C^*)\to\C^*$.
    All the $\rho_j$ are paths in a single fiber, and 
    it is well-known that the fundamental group of 
    each fiber is the Artin group
    of type $\Atilde_{n-1}$, with the $\rho_j$ corresponding
    to the standard Artin generators.  See for example 
    \cite[Thm.\ 3.8]{van-der-Lek}.

    The existence of this fibration shows 
    $\Br_n(\C^*)\iso\Art(\Atilde_{n-1})\semidirect\Z$,
    where the $\Z$ is generated by any braid with total 
    winding number~$1$ around~$0$.  For example,
    the braid $t$ 
    that rotates the roots of unity one position counterclockwise;  formally:
    $x_j(t)=\zeta^j e^{2\pi i t/n}$.  Drawing a picture
    shows $t\rho_k t^{-1}=\rho_{k-1}$ for all~$k$.

    \eqref{ItemAdjoinZeroRelations}
    From the inclusion $Y^\circ(\C^*)\to Y^\circ(\C)$, one
    can work out the
    map $\Br_n(\C^*)\to\Br_n(\C)$ by using Van Kampen's theorem.
    It is surjective, with kernel normally generated by 
    the following braid:  
    every $x_k(t)$ is the constant path at $\zeta^k$ except
    $x_{n-1}(t)$, which starts at $\zeta^{n-1}$,  
    approaches~$0$, encircles it
    once negatively,
    and then returns to~$\zeta^{n-1}$.  Formally, $\Br_n(\C)$
    is
    the quotient of $\Br_n(\C^*)$ by the relation 
    $D_{n-1}t^{-1}=1$.

    Another way to say this is that $\Br_n(\C)$ is the quotient
    of $\Art(\Atilde_{n-1})$ by the relations that $D_{n-1}$
    conjugates the generators of the Artin group in the same
    way that $t$ does.  That is, by the relations
    $D_{n-1}\rho_k D_{n-1}^{-1}=\rho_{k-1}$ for all~$k$.
    Repeatedly conjugating $D_{n-1}$ by itself therefore
    yields $D_{n-2}, D_{n-3},\dots$.  On the other hand, repeatedly
    conjugating $D_{n-1}$ by itself also yields
    $D_{n-1},D_{n-1},\dots$  This establishes the relations
    in \eqref{ItemAdjoinZeroRelations}.

    To finish the proof of \eqref{ItemAdjoinZeroRelations} it is enough to show that
    adjoining the relations $D_j=D_{j'}$, for all $j,j'$,
    to $\Art(\Atilde_{n-1})$
    implies $D_{n-1}\rho_k D_{n-1}^{-1}=\rho_{k-1}$
    for all~$k$.  In fact it is enough to adjoin any single relation
    $D_j=D_{j'}$ with $j'\neq j$;
    write $D$ for the common image
    of $D_j$ and $D_{j'}$ in the quotient.
    If $j'\notin\{j\pm1\}$, then
    \eqref{EqDDecreases} establishes $D\rho_k D^{-1}=\rho_{k-1}$ for all~$k$.
    Reusing the argument of the previous paragraph,
    repeatedly conjugating $D$ by itself yields $D_{j-1},D_{j-2},\ldots=D$.
    This implies
    $D_{n-1}\rho_k D_{n-1}^{-1}=\rho_{k-1}$ for all~$k$, as desired.
    In the remaining case $j'\in\{j\pm1\}$, we may
    suppose $j'=j+1$ by  exchanging $j$ and $j'$ if necessary.
    This time \eqref{EqDDecreases}
    shows only that $D\rho_k D^{-1}=\rho_{k-1}$
    for all~$k\neq j+2$.  But $D_j=D_{j+1}$
    implies $D_j\rho_{j+2}=D_{j+1}\rho_{j+2}$, which picture-drawing
    shows is equal to $\rho_{j+1}D_j$.  
    So again we  have
    $D\rho_{j+2}D^{-1}=\rho_{j+1}$, and the 
    same argument applies.

    \eqref{ItemAdjoinInftyRelations}
    Inversion across the unit circle exchanges $0$ with $\infty$,
    preserves~$t$, 
    inverts every $\rho_k$, and exchanges $D_{n-1}$ with $I_1^{-1}$.
    Therefore we may quote the
    argument for \eqref{ItemAdjoinZeroRelations} to obtain
    two descriptions of
    $\Br_n(\C^*\cup\{\infty\})$.  First, it is the quotient of
    $\Br_n(\C^*)$ by the relation $I_1^{-1}t^{-1}=1$. Second, it is 
    the quotient of $\Art(\Atilde_{n-1})$ by the relations that
    all $I_j$ are equal.

    \eqref{ItemAdjoinBothRelations} Using
    Van Kampen's theorem twice shows that $\Br_n(\cpone)$ is the
    quotient of $\Br_n(\C^*)$ by both relations $D_{n-1}t^{-1}=1$
    and $I_1^{-1}t^{-1}=1$.  
    Therefore $\Br_n(\cpone)$ can be described as the quotient
    of $\Art(\Atilde_{n-1})$ by $I_1D_{n-1}=1$ and either the relations 
    \eqref{ItemAdjoinZeroRelations} or $\eqref{ItemAdjoinInftyRelations}$.
    If we assume \eqref{ItemAdjoinZeroRelations}, then the fact that 
    $D$ centralizes $I_1$
    forces $I_0,I_{n-1},\ldots=I_1$, establishing
    \eqref{ItemAdjoinInftyRelations}.  And vice-versa.  So the
    relation $I_1D_{n-1}=1$ can be written $ID=1$.
\end{proof}

For any elements
$g_1,\dots,g_m$ of a group, we define
\begin{equation*}
    \Delta(g_1,\dots,g_m)=(g_1 g_2 \cdots g_m)(g_1 g_2 \cdots g_{m-1})
    \cdots(g_1 g_2) g_1
\end{equation*}
In $\Br_{n+1}$, $\Delta(\rho_1,\dots,\rho_n)$ is called the 
``fundamental element''; it conjugates each $\rho_j$ to $\rho_{n+1-j}$,
and its square generates the center of $\Br_n$.

\medskip
Using the obvious
$\PGL_2\C$ action on $X(\cpone)$,
we define the moduli space $\M_{n}$ 
of $n$-point subsets of~$\cpone$
as
$$
\M_{n}=X^\circ(\cpone)\!\bigm/\!(S_n\times\PGL_2\C)
=Y^\circ(\cpone)/\PGL_2\C
$$
We assume $n\geq3$ to avoid degenerate cases.  Then
$\PGL_2(\C)$ acts freely on $X^\circ(\cpone)$, so the quotient
is a manifold.  Because $\M_{n}$ is the quotient 
of this manifold by the finite group~$S_n$, it is an orbifold.
Recall that the tuple
$T$ is our basepoint for $X^\circ(\cpone)$.
We take its images in $X^\circ(\cpone)/\PGL_2\C$
and $\M_{n}$, also
denoted~$T$,
as our basepoints when discussing their
orbifold fundamental groups.
It is well-known that  the
map $\Br_n(\cpone)\to\piorb(\M_{n})$ induced
by $Y^\circ(\cpone)\to\M_{n}$ is surjective, with
kernel is equal to the center of $\Br_n(\cpone)$, which is
isomorphic to $\Z/2$ with generator $I^n=D^{-n}$.  See
for example \cite[Thm.\ 4.5]{Birman}.  So theorem~\ref{ThmBraidGroups}
implies the following theorem.  
(The final assertion is obvious in the presence of the
fourth relation.)

\begin{theorem}
    \label{ThmPi1ModuliSpace}
    The orbifold fundamental group
    $\piorb(\M_{n},T)$ is generated by 
    $\rho_0,\discretionary{}{}{}\dots,\discretionary{}{}{}\rho_{n-1}$,
    with defining relations
    \begin{enumerate}
        \item
            \label{ItemM0nBraidRelations}
            $\rho_j\rho_k\rho_j=\rho_k\rho_j\rho_k$ or
            $\rho_j\rho_k=\rho_k\rho_j$, according to whether
            $k\in\{j\pm1\}$ or not.
        \item
            \label{ItemM0nIsAreEqualRelations}
            All the
            $I_j:=\rho_j\rho_{j+1}\cdots\rho_{j+n-1}$ coincide.
        \item
            \label{ItemM0nDsAreEqualRelations}
            All the 
            $D_j:=\rho_j\rho_{j-1}\cdots\rho_{j-n+1}$ coincide.
        \item
            \label{ItemM0nIDis1Relation}
            $ID=1$, where $I$ resp.\ $D$
            is
            the common image of the $I_j$ resp.\ $D_j$.
        \item
            \label{ItemM0nInEqualsDnEquals1}
            $I^n=D^n=1$.
    \end{enumerate}
    When $n$ is even, \eqref{ItemM0nInEqualsDnEquals1}  may be replaced
    by the relation $I^{n/2}=D^{n/2}$.\hfill\qed
\end{theorem}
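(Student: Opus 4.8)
The plan is to deduce Theorem~\ref{ThmPi1ModuliSpace} from Theorem~\ref{ThmBraidGroups}\eqref{ItemAdjoinBothRelations} together with the quoted description of the kernel of the surjection $\pi\colon\Br_n(\cpone)\to\piorb(\M_n,T)$ induced by $Y^\circ(\cpone)\to\M_n$. First I would record that, by Theorem~\ref{ThmBraidGroups}\eqref{ItemAdjoinBothRelations}, the group $\Br_n(\cpone)$ is presented on the generators $\rho_0,\dots,\rho_{n-1}$ by precisely relations \eqref{ItemM0nBraidRelations}, \eqref{ItemM0nIsAreEqualRelations}, \eqref{ItemM0nDsAreEqualRelations} and \eqref{ItemM0nIDis1Relation}. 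The map $\pi$ fixes the basepoint $T$ and carries each loop $\rho_j$ to the like-named loop in $\piorb(\M_n,T)$, and its kernel is the center $\langle I^n\rangle\iso\Z/2$ of $\Br_n(\cpone)$ (\cite[Thm.\ 4.5]{Birman}). Since $I^n$ is central, the normal closure of $\{I^n\}$ is exactly this center, so $\piorb(\M_n,T)$ is obtained from the presentation of $\Br_n(\cpone)$ by adjoining the one relation $I^n=1$. Finally, $I^n=1$ together with \eqref{ItemM0nIDis1Relation} (which gives $D=I^{-1}$, hence $D^n=I^{-n}$) forces $D^n=1$, so adjoining $I^n=1$ is the same as adjoining \eqref{ItemM0nInEqualsDnEquals1}; this yields the presentation \eqref{ItemM0nBraidRelations}--\eqref{ItemM0nInEqualsDnEquals1}.

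For the closing sentence about even $n$, I would work inside the group $G$ presented by \eqref{ItemM0nBraidRelations}--\eqref{ItemM0nIDis1Relation} (that is, $\Art(\Atilde_{n-1})$ modulo those relations). There $D=I^{-1}$, so $D^{n/2}=I^{-n/2}$, and the proposed relation $I^{n/2}=D^{n/2}$ therefore reads $I^{n/2}=I^{-n/2}$, i.e.\ $I^n=1$; by the first paragraph this is equivalent, over \eqref{ItemM0nBraidRelations}--\eqref{ItemM0nIDis1Relation}, to \eqref{ItemM0nInEqualsDnEquals1}. Hence replacing \eqref{ItemM0nInEqualsDnEquals1} by $I^{n/2}=D^{n/2}$ does not change the presented group.

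I do not expect a genuine obstacle here: all of the content is imported from Theorem~\ref{ThmBraidGroups} and the cited computation of the center of $\Br_n(\cpone)$, and everything else is a formal consequence of relation \eqref{ItemM0nIDis1Relation}. The only points that merit care are the bookkeeping that $\pi$ really does identify the generators and basepoints on the two sides (so the displayed relations are the \emph{defining} relations of $\piorb(\M_n,T)$), and the elementary remark that a quotient group is presented by any presentation of the ambient group with a normally generating set of the kernel adjoined as relators.
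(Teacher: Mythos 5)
Your argument is exactly the one the paper uses: the paper states (immediately before Theorem~\ref{ThmPi1ModuliSpace}) that the map $\Br_n(\cpone)\to\piorb(\M_n)$ has kernel equal to the center $\langle I^n\rangle\iso\Z/2$, and then derives the presentation from Theorem~\ref{ThmBraidGroups}\eqref{ItemAdjoinBothRelations}, with the even-$n$ remark noted as an easy consequence of relation~\eqref{ItemM0nIDis1Relation}. Your write-up just fills in the routine bookkeeping that the paper elides, so it is correct and takes essentially the same approach.
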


\section{\texorpdfstring{Change of basepoint in $\BB^{13}$}{Change of basepoint in B13}}
\label{SecChange}

\noindent
In this section we begin working in the hyperplane arrangement
complement $\BB^{13}-\H$, with an emphasis on how the Deligne-Mostow
ball $\BDM$ lies inside~$\BB^{13}$.
We fix an $A_4$ subdiagram of
the incidence graph of $P^2\F_3$, whose $26$ nodes are the
point- and line-roots $p_j$ and $l_j$ from section~\ref{t-P2-F3-model-of-L}. 
Any two such subdiagrams are
equivalent, but for concreteness we choose 
$l_1,p_2,l_2,p_3$.  These roots are mutually orthogonal,
except that $\ip{p_2}{l_1}=\ip{p_2}{l_2}=\ip{p_3}{l_2}=\theta$.
Therefore their integral span
is a copy of
the Eisenstein lattice~$L_4$.  We call it  $L_4^{(1)}$ to distinguish  
it from two other
copies of $L_4$ introduced below.
As indicated in section~\ref{subsec-DM-lattice},
we will write $\LDM$ for the orthogonal complement
of $L_4^{(1)}$ in~$L$, and
$\BDM$ for the corresponding $9$-ball. 
See section~\ref{SecDeligneMostow} for more information about
the connection of the Deligne-Mostow ball to moduli of
$12$-tuples in~$\cpone$.
Exactly $40$ mirrors contain $\BDM$,
corresponding to the scalar classes of the $240$ roots 
of $L_4^{(1)}$.

The nodes of $P^2\F_3$, that are not joined to the $A_4$, form
an $\Atilde_{11}$ diagram, ie a $12$-gon, namely
$$
p_6,
l_{10},
p_{13},
l_4,
p_7,
l_{11},
p_{12},
l_9,
p_9,
l_8,
p_8,
l_5
$$
in cyclic order.  
We introduce alternate notation
$s_0,\dots,s_{11}$ for them, in this order.
We also write $s_{\rmA},\sB,\sC,s_{\rmD}$ for the
roots $l_1,p_2,l_2,p_3$ forming the $A_4$ diagram.
As explained in section~\ref{subsec-DM-lattice}, 
the $\E$-span of $s_0,\dots,s_{11}$ is~$\LDM$.
We write $\rho$ for the projection of $\tau$ to
$\BDM$, and  let 
$\sigma$ be any point of $\overline{\tau\rho}-\{\rho\}$.
We abbreviate the meridians 
$M_{\tau,s_0},\dots,M_{\tau,s_{11}},M_{\tau,\sA},\dots,M_{\tau,\sD}$ 
to $\tau_0,\dots,\tau_{11},\tauA,\dots,\tauD$, and similarly
with $\sigma$ in place of~$\tau$.  
(In section~\ref{SecDeligneMostow} we will extend this notation by writing
$\rho_0,\dots,\rho_{11}$ for the meridians based at~$\rho$ and
associated to $s_0,\dots,s_{11}$.  But these
will represent elements of $\piorb((\BDM-\HDM)/\PGDM,\rho)$;
because $\rho$ lies in~$\H$, it doesn't make sense to speak of
$\piorb((\BB^{13}-\H)/\PG,\rho)$.)

The following theorem is the main result
of this section. Lemma~\ref{LemMirrorsNearRho} is 
the only other result referenced later in the paper.

\begin{theorem}
    \label{ThmBasepointChange}
    The segment $\overline{\tau\rho}$ meets~$\H$ only at~$\rho$.
    For any $\sigma\in\overline{\tau\rho}-\{\rho\}$,
    the change-of-basepoint isomorphism 
    $$\piorb\bigl((\BB^{13}-\H)/\PG,\tau\bigr)\iso
    \piorb\bigl((\BB^{13}-\H)/\PG,\sigma\bigr),$$
    induced by the segment $\overline{\tau\sigma}$,
    identifies each  meridian $\tau_0,\dots,\tau_{11},\tauA,\dots,\tauD$
    based at~$\tau$ with the corresponding meridian 
    $\sigma_0,\dots,\sigma_{11},\sigmaA,\dots,\sigmaD$
    based at~$\sigma$.
\end{theorem}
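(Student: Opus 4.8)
The plan is to establish two facts: first, that $\overline{\tau\rho}$ meets $\H$ only at its endpoint $\rho$; and second, that translating the basepoint along this segment (or rather along $\overline{\tau\sigma}$, a proper subsegment avoiding $\rho$ entirely) carries each point- and line-meridian based at $\tau$ to the corresponding one based at $\sigma$. The second fact will be essentially formal once the geometry of the first is pinned down, because the meridian $M_{b,s}$ depends only on the choice of basepoint $b$ and root $s$, and the change-of-basepoint map along a path $\gamma$ sends $M_{\tau,s}$ to $(\gamma^{-1}\cdot\mu_{\tau,s}\cdot\gamma', S)$; one must check this is homotopic rel endpoints to $\mu_{\sigma,s}$.

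First I would analyze $\overline{\tau\rho}$. Recall $\tau=l_\infty+ip_\infty$ and $\rho$ is its orthogonal projection to $\BDM=\BB(\LDM)$, where $\LDM=(L_4^{(1)})^\perp$ and $L_4^{(1)}=\langle l_1,p_2,l_2,p_3\rangle$. So $\overline{\tau\rho}$ lies in the $2$-ball $\BB(\langle\tau,\rho\rangle_\C)$. A mirror $s^\perp$ meets this segment iff some root $s$ is orthogonal to a point of the segment. I would decompose an arbitrary point of $\overline{\tau\rho}$ as $a\rho+b\tau'$ where $\tau'$ is the component of $\tau$ in $L_4^{(1)}\otimes\C$, and compute $\ip{s}{a\rho+b\tau'}$ for $s$ a root of $L$, splitting $s=s_{\rm DM}+s_4$ according to $L\otimes\C=(\LDM\otimes\C)\oplus(L_4^{(1)}\otimes\C)$. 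The key input is the distance estimate: by section~\ref{subsec-point-roots-etc}, the mirrors closest to $\tau$ are exactly the $26$ point- and line-mirrors, and by lemma~\ref{LemMirrorsNearRho} (cited as available) the mirrors closest to $\rho$ within $\BDM$ are exactly those of $s_0,\dots,s_{11}$. Using \eqref{eq-distance-point-to-hyperplane} and convexity of the distance-to-a-mirror function along a geodesic, I would argue that any mirror meeting the open segment $\overline{\tau\rho}-\{\rho\}$ would have to be closer to $\tau$ than the point-/line-mirrors, or would force a contradiction with the known configuration near $\rho$; the only mirrors actually through $\rho$ are the $40$ mirrors of roots of $L_4^{(1)}$ (since $\rho\in\BDM$ and $\BDM$ is cut out by exactly those), and one checks directly that none of these $40$ mirrors contains any interior point of the segment because $\tau'\neq 0$ pairs nontrivially with $L_4^{(1)}$-roots. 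This is the step I expect to be the main obstacle: making the convexity/distance argument airtight, i.e.\ ruling out \emph{every} mirror of $L$, not just the obvious families, from meeting the open segment. It may require the explicit $P^2\F_3$ coordinates from section~\ref{t-P2-F3-model-of-L} and a finite check, or an appeal to the fact (\cite{Basak-bimonster-1}, \cite{AB-26gens}) that no mirror comes between $\tau$ and the $26$ nearest ones.

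Once $\overline{\tau\rho}$ is known to be mirror-free except at $\rho$, fix $\sigma$ in its interior; then $\overline{\tau\sigma}$ lies entirely in $\BB^{13}-\H$, so it induces a genuine change-of-basepoint isomorphism. For each of the roots $s\in\{s_0,\dots,s_{11},\sA,\dots,\sD\}$, I would compare $\mu_{\sigma,s}$ with the concatenation $\reverse(\overline{\tau\sigma})$, then $\mu_{\tau,s}$, then $S(\overline{\tau\sigma})$. Both the projection of $\tau$ and the projection of $\sigma$ into $s^\perp$ can be computed; since $\tau$, $\sigma$, and $s^\perp$ all sit in a low-dimensional subspace and no other mirror interferes (by the first part, together with the hypothesis in section~\ref{subsec-roots-in-general}--\ref{subsec-orbifold-fundamental-group} that the relevant segments $\overline{bp}$ meet no other mirror), the region swept out is a disk in $\BB^{13}-\H$, so the two loops are homotopic rel endpoints by a straight-line homotopy shrinking $\overline{\tau\sigma}$ to a point along the segment. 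This works uniformly for all sixteen roots because the segment $\overline{\tau\sigma}$ is the \emph{same} segment in every case, and in each case it meets no mirror, so the homotopy never crosses $\H$. I would phrase this last step as a single lemma: if $\overline{bb'}$ avoids $\H$ and also avoids all mirrors except possibly $s^\perp$ is irrelevant here, then change of basepoint along $\overline{bb'}$ sends $M_{b,s}\mapsto M_{b',s}$; then apply it sixteen times.
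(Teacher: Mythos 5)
There is a genuine gap in the second half of your argument. You claim that the homotopy carrying $\overline{\sigma\tau}\cdot\mu_{\tau,s}\cdot S(\overline{\tau\sigma})$ to $\mu_{\sigma,s}$ stays in $\BB^{13}-\H$ ``because the segment $\overline{\tau\sigma}$ meets no mirror, so the homotopy never crosses $\H$.'' But the homotopy sweeps out a two-dimensional region consisting of the meridians $\mu_{b,s}$ for \emph{all} intermediate basepoints $b\in\overline{\tau\sigma}$, not just the segment $\overline{\tau\sigma}$ itself. Knowing that the two boundary meridians $\mu_{\tau,s}$ and $\mu_{\sigma,s}$ avoid all other mirrors does not prevent a mirror (a real-codimension-$2$ totally geodesic subspace) from piercing the interior of this disk while missing its boundary. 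Your appeal to the sentence in section~\ref{subsec-roots-in-general}--\ref{subsec-orbifold-fundamental-group} (``$\overline{bp}$ never meets any other mirror'') is circular: that sentence is a promise about the specific basepoints used in the paper, and verifying it for a one-parameter family of basepoints $b\in\overline{\tau\sigma}$ is precisely the nontrivial content that must be proved. This is exactly what the paper's lemma~\ref{LemPolygonsMissMirrors} does, by showing that the full quadrilateral $Q$ (for $s_0,\dots,s_{11}$) or triangle $T$ (for $\sA,\dots,\sD$) containing the swept region misses $\H$ away from prescribed edges.

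The first half is also shakier than you suggest. Convexity of the distance-to-mirror function does rule out mirrors \emph{through} $\rho$ from meeting the open segment (if such a mirror met an interior point, it would contain the whole geodesic, hence $\tau$), but for a mirror $m^\perp$ with $d(\tau,m^\perp)>0$ and $d(\rho,m^\perp)>0$, convexity only gives an \emph{upper} bound on the distance along the segment and does nothing to prevent the segment from crossing $m^\perp$. Nor is the intuition that a crossing mirror ``would have to be closer to $\tau$ than the point-/line-mirrors'' correct: $d(\tau,\rho)\approx 1.3$ while the nearest mirrors to $\tau$ lie at distance $\approx 0.55$, so the segment is long enough for a crossing mirror to be well outside the first shell. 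The paper handles this by covering the relevant polygons with balls of critical radius around \emph{two} centers ($p_\infty$ and a second point $c$), and enumerating all roots in batches $0$--$3$ around each. That two-center covering and the associated finite enumeration of roots is the substance of the proof that your proposal does not reach.
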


\begin{proof}
    This follows immediately from the next lemma: fixing
    $j=0,\dots,11,\rmA,\dots,\discretionary{}{}{}\rmD$, and letting
    $\sigma$ vary over $\overline{\tau\rho}-\{\rho\}$, the surface
    swept out by the meridians $\sigma_j$
    misses~$\H$.
\end{proof}

\begin{lemma}
    \label{LemPolygonsMissMirrors}
    \leavevmode
    \begin{enumerate}
        \item
            \label{ItemQuadrilateral}
            Fix $j=0,\dots,11$, and let $Q$ be the totally
            real quadrilateral with vertices $\tau,\rho$, and the
            projections $\tau',\rho'$ of these points to $s_j^\perp$.
            Then $Q$ meets $s_j^\perp$ in $\overline{\tau'\rho'}$, 
            meets the
            $40$ mirrors containing $\BDM$ in
            $\overline{\rho\rho'}$, and misses all other mirrors.
        \item
            \label{ItemTriangles}
            Fix $j=\rmA,\dots,\rmD$, and let  $T$ be  the 
            totally real triangle with vertices
            $\tau$, $\rho$ and the projection $\tau'$
            of $\tau$ to $s_j^\perp$.
            Then  $T$ meets $s_j^\perp$ in $\overline{\tau'\rho}$, 
            meets the other~$39$ mirrors
            containing $\BDM$ at $\rho$ only,  and
            misses all other mirrors.
    \end{enumerate}
\end{lemma}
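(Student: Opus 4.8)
The plan is to exploit the fact that everything here is totally real and essentially two-dimensional. For part~\eqref{ItemQuadrilateral}, note that $\tau$, $\rho$ lie in the span of the real vectors $l_\infty$, $p_\infty$, $s_j$ (recall $\tau = l_\infty + i p_\infty$ but projecting onto the real geodesic configuration one should work with a carefully chosen real form); more precisely, since $s_j$ is one of the twelve $12$-gon roots, it is orthogonal to $L_4^{(1)}$, and $\rho$ is the projection of $\tau$ to $\BDM$, so $\overline{\tau\rho}$ is orthogonal to $\BDM$. The four points $\tau,\rho,\tau',\rho'$ all lie in a single real $2$-plane $\Pi$ inside $\C^{13,1}$ spanned (over $\R$) by a vector along $\overline{\tau\rho}$ and a vector along the $s_j$-direction, with $\rho'$ the foot of $\rho$ on $s_j^\perp$ and $\tau'$ the foot of $\tau$. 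First I would set up coordinates on this real $2$-plane, check that the induced form is Lorentzian of signature $(1,1)$ or positive-definite as appropriate on the relevant piece, and verify directly that $Q$ (the solid quadrilateral) lies in $\BB(\Pi\tensor\C)\cap\BB^{13}$, a totally geodesic disk. Then a mirror $s^\perp$ meets $Q$ only if the root $s$ has nonzero pairing pattern forcing $s\tensor\C$ to meet $\Pi\tensor\C$; the meeting locus of two such geodesic subspaces is itself geodesic, hence a point or a geodesic segment, and one identifies it by a short computation with one of the three asserted possibilities.

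The heart of the argument is a case analysis on which roots $s$ of $L$ can have a mirror meeting the disk containing $Q$. Here I would use the results quoted from section~\ref{subsec-point-roots-etc}: the mirrors closest to $\tau$ are exactly the $26$ point- and line-mirrors, and (by lemma~\ref{LemMirrorsNearRho}, which we may assume) the mirrors closest to $\rho$ inside $\BDM$ are exactly the twelve $s_i^\perp$, while the $40$ mirrors through $\BDM$ correspond to the roots of $L_4^{(1)}$. The plan is: (a) compute $d(\tau,s_j^\perp)$ and $d(\rho,s_j^\perp)$ using \eqref{eq-distance-point-to-hyperplane} and see that along $\overline{\tau\rho}$ the nearest mirror of the whole arrangement, other than $s_j^\perp$ itself and the $40$ through $\BDM$, stays strictly farther than the width of $Q$ in the transverse direction; (b) among the $40$ mirrors through $\BDM$, since they all contain $\rho$ and $\rho'$ lies in $\BDM$, they meet $Q$ exactly in $\overline{\rho\rho'}$ — this needs the observation that $\overline{\rho\rho'}\sset\BDM$, which holds because $\rho'$ is the projection of $\rho\in\BDM$ to $s_j^\perp$ and $s_j\in\LDM$, so the projection stays inside $\BDM$; (c) the mirror $s_j^\perp$ meets $Q$ exactly in $\overline{\tau'\rho'}$ by construction, since $\tau',\rho'$ are by definition the feet of $\tau,\rho$ on it and these feet, together with the segment between them, lie in $Q$. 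For part~\eqref{ItemTriangles} the geometry is the same except that $s_j$ (one of $\sA,\dots,\sD$) is now a root of $L_4^{(1)}$, so $s_j^\perp$ contains $\BDM$, hence contains $\rho$; thus $\rho'=\rho$, the quadrilateral degenerates to the triangle $T$ with vertices $\tau,\rho,\tau'$, the $40$ mirrors become $39$ others through $\BDM$ meeting $T$ only at $\rho$, and the rest of the analysis carries over.

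For step (a) — the quantitative part — I expect to reduce to a finite computation in the explicit $P^2\F_3$ model: write $\tau=(4+\sqrt3;1^{13})$, $\rho$ as its projection onto $\LDM\tensor\C$, parametrize $\sigma(t)$ along $\overline{\tau\rho}$, and for each $\PG$-orbit of roots not in the distinguished families bound $|\ip{\sigma(t)}{s}|$ below. Using the $L_3(3)\semidirect2$ symmetry and the $D_{24}$ symmetry one cuts the orbit count down drastically, and one only needs an inequality, not an exact value. The main obstacle, I think, is controlling the roots of $L$ that are \emph{not} visible from the $P^2\F_3$ generating set — i.e. checking that no ``unexpected'' root of norm $3$ has a mirror slicing through the thin quadrilateral. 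This is the place where one genuinely uses $\theta L^* = L$ and the rigidity it gives (every copy of $L_4$ is a summand), together with the distance bounds in \cite[prop.~1.2]{Basak-bimonster-1} and \cite[Lemma A.5]{AB-26gens}; I would try to phrase the needed statement as: any root $s$ with $s^\perp\cap Q\neq\emptyset$ either is proportional to $s_j$, or lies in $L_4^{(1)}$, or has $\ip{\tau}{s}$ small enough to contradict that the $26$ point/line roots are the unique closest ones to $\tau$. Once that trichotomy is established the lemma follows, and Theorem~\ref{ThmBasepointChange} is immediate by sweeping $\sigma$ along $\overline{\tau\rho}$ as indicated.
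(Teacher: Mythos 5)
Your overall plan --- reduce by $D_{24}$-symmetry to a handful of cases, exploit that the polygons are totally real, and reduce to a finite computation in the $P^2\F_3$ coordinates --- is the right spirit and matches the paper's proof in outline. However, there is a genuine gap in step (a), and it is precisely the step you flag as "the main obstacle."

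Your proposed trichotomy reads: any root $s$ with $s^\perp\cap Q\neq\emptyset$ is proportional to $s_j$, or lies in $L_4^{(1)}$, or has $\ip{\tau}{s}$ small enough to contradict the fact that the $26$ point/line mirrors are the unique closest ones to $\tau$. This fails because the polygon $Q$ reaches all the way from $\tau$ down to $\rho$, and near $\rho$ the arrangement is very crowded: the mirrors $s_i^\perp$ ($i\neq j$) and many other mirrors of $\HDM$ pass at tiny distance from $\rho$ (on the order of $0.11$), while being nowhere near $\tau$. A root whose mirror slices the $\rho$-corner of $Q$ need have no special relationship with $\tau$ at all, so the "$\ip{\tau}{s}$ small" alternative cannot be reached and the trichotomy does not close. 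In other words, controlling mirrors by their distance to $\tau$ handles only the $\tau$-end of the polygon; you have no mechanism to enumerate the mirrors threatening the $\rho$-end.

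The paper's resolution is the one thing missing from your sketch: it introduces a second auxiliary lattice point $c$ (a norm $-3$ vector orthogonal to $\langle s_0\rangle\oplus L_4^{(1)}\oplus L_4^{(2)}\oplus L_4^{(3)}$, chosen because, like $p_\infty$, it admits an explicit batch-by-batch enumeration of nearby mirrors), then proves that each polygon is covered by the union of the fourth critical balls centered at $p_\infty$ and at $c$ (Lemma \ref{LemPolygonsCoveredByBalls}). That covering lemma reduces the infinite mirror-enumeration problem to a finite one: only mirrors in batches $0$--$3$ around $p_\infty$ or around $c$ can touch the polygon, and these are listed explicitly and checked against the polygon with the inner-product-triangle criterion from \cite[Lemma~A.1]{AB-26gens}. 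Also note that the paper uses $p_\infty$, not $\tau$, as the first center --- $p_\infty$ has a clean lattice description making the batch enumeration tractable, whereas $\tau$ does not. Without the second center $c$ and the covering lemma, the argument does not reduce to a finite check, and the lemma is not proved.
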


The stabilizer $L_3(3)\semidirect2$ of~$\tau$ contains 
an order~$24$ dihedral group $D_{24}$ 
that preserves $\{\sA^\perp, \dots, \sD^\perp\}$ and
acts faithfully and transitively on the set of
mirrors $s_0^\perp,\dots,s_{11}^\perp$. 
Half of these transformations
exchange $\sA$ with $\sD$ and
$\sB$ with $\sC$. 
Therefore it suffices to prove the $\sA,\sB$ cases
of \eqref{ItemTriangles} and the $s_0$ case of 
\eqref{ItemQuadrilateral}.   We restrict attention
to these three cases.

We will need to know the corners of the polygons 
explicitly.  All three have $\tau=(4+\sqrt3;1,\dots,1)$
as a vertex, with norm
$-6-8\sqrt3$.  
Now, the projection of $\tau$ to $L_4^{(1)}\tensor\C$
is
$$
-(3+2\sqrt3)(l_1+i p_3)-(5+3\sqrt3)(l_2+i p_2)
$$
One checks this by computing this vector's inner products with 
$l_1,p_2,l_2,p_3$
and comparing with
$\ip{\tau}{p_j}=-\theta$ and
$\ip{\tau}{l_j}=-\sqrt3$.
Subtracting it from $\tau$ gives
the other vertex that is shared by all three polygons, namely
$$
    \rho=(6\lambda;
    2\lambda,
    0, 0,
    2\lambda,
    3\lambda,
    1,1,1,1,
    2\lambda,
    3\lambda,
    1,1)
    \quad\hbox{where $\lambda=2+\sqrt3$}
$$
Computation shows  $\rho^2=\ip{\rho}{\tau}=-36-24\sqrt3$.

The remaining vertex or vertices are different in the three
cases.  First we consider the $\sA(=l_1)$ case of 
Lemma~\ref{LemPolygonsMissMirrors}\eqref{ItemTriangles}.  Then
$$
\tau'=\tau-\frac{\ip{\tau}{l_1}}{\ip{l_1}{l_1}}l_1
=\tau-\frac{-\sqrt3}{3}l_1
=\tau+l_1/\sqrt3.
$$
The calculation of $\tau'$ in the $\sB(=p_2)$ case is the same,
except that $\ip{\tau}{p_2}=-\theta$, leading to
$\tau'=\tau+i p_2/\sqrt3$. 
In both cases one computes
$$
(\tau')^2=\ip{\tau'}{\tau}=-7-8\sqrt3
\qquad\hbox{and}\qquad
\ip{\tau'}{\rho}=-36-24\sqrt3.$$
Similarly, in the quadrilateral case we have
$\tau'=\tau+i p_6/\sqrt3$, 
$$
(\tau')^2=\ip{\tau'}{\tau}=-7-8\sqrt3
\qquad\hbox{and}\qquad
\ip{\tau'}{\rho}=-37-24\sqrt3.$$
The fourth vertex of~$Q$ is $\rho'$.  Because $\rho$ differs from
$\tau$ by a vector orthogonal to $s_0(=p_6)$, we have 
$\ip{\rho}{p_6}=\ip{\tau}{p_6}=-\theta$. Therefore 
$\rho'=\rho+i p_6/\sqrt3$.  One computes
$$
(\rho')^2=
\ip{\rho'}{\rho}=
\ip{\rho'}{\tau}=
\ip{\rho'}{\tau'}=
-37-24\sqrt3.
$$
The reason that all these inner products turn out to be real is that
$ip_1,\dots,\discretionary{}{}{}ip_{13},\discretionary{}{}{}l_1,\dots,l_{13}$ have real inner products,
and all these points of $\BB^{13}$ lie in the real hyperbolic $13$-space
defined by their real span.

In each of the three cases, we have seen that the vertices of 
$T$ resp.\ $Q$ are represented by negative-norm
vectors whose real span 
is 3-dimensional and
totally real (ie contains no complex subspaces).
This justifies our claim in Lemma~\ref{LemPolygonsMissMirrors} that
$Q,T\sset\BB^{13}$ are totally real polygons.  
For use in the proof of lemma~\ref{LemPolygonsMissMirrors}, we also note
the negativity of 
the pairwise
inner products of the vectors representing the vertices.

\medskip
Our strategy for proving
lemma~\ref{LemPolygonsMissMirrors}
derives from  \cite[Appendix A]{AB-26gens}.  There,
we showed that various totally real triangles are covered by 
balls of various radii centered at the 
point 
$ p_\infty=(\theta;0,\dots,0) \in\BB^{13}$ where
all $13$ point-mirrors meet.  Then we used a computer
to enumerate all mirrors coming close enough to $p_\infty$ to meet
these balls, and
worked out how they met each triangle of interest.  We will use
the same strategy, but we will also need balls centered around
another point $c\in\BB^{13}$.  To define it, note
that there is a unique way to choose two $A_4$ diagrams in
the $\Atilde_{11}$, that are not joined to each other or to $s_0$.
Namely, we define $L_4^{(2)}$ as the span of $s_2,\dots,s_5$, and
$L_4^{(3)}$ as the span of $s_7,\dots,s_{10}$.  We define $c$
as the point of $\BB^{13}$ that is orthogonal to
$\spanof{s_0}\oplus L_4^{(1)}\oplus L_4^{(2)}\oplus L_4^{(3)}$.
Just like $p_\infty$, $c$ is represented by a norm $-3$ lattice
vector, namely
$$
c=\theta(4;1,0,0,2,2,0,0,0,0,1,2,1,0)
$$

As in \cite[Appendix A]{AB-26gens}, we call an open ball centered at
$p_\infty$ a critical ball if its boundary is tangent to some mirror.
Even though some mirrors pass through $p_\infty$, we do not count
the radius~$0$ ball as critical.  The distance from $p_\infty$ to 
the mirror of a root~$s$ is given by
$$
\sinh^{-1}\sqrt{-\frac{|\ip{p_\infty}{s}|^2}{p_\infty^2 s^2}}
=\sinh^{-1}\sqrt{\frac{|\ip{p_\infty}{s}|^2}{9}}
=\sinh^{-1}\sqrt{(0,1,3,4,7,\dots)/3}
$$
The last equality comes from $\ip{p_\infty}{s}\in\theta\E$.
The numerical values of the first few critical radii are
$$
r_1,r_2,r_3,r_4,\ldots\approx .549, .881, .987, 1.210,\dots
$$
We call $s$ a batch~$n$ root, and $s^\perp$ a batch~$n$ mirror,
around~$p_\infty$, if $s^\perp$ is tangent to the $n$th
critical ball.  We extend this language to ``batch~$0$'' in the
case $s\perp p_\infty$.
All these considerations apply verbatim with $c$ in place of $p_\infty$.

\begin{lemma}
    \label{LemPolygonsCoveredByBalls}
    The triangles in the $\sA,\sB$ cases of 
    lemma~\ref{LemPolygonsMissMirrors}\eqref{ItemTriangles}, resp.\ the quadrilateral
    in the $s_0$ case of lemma~\ref{LemPolygonsMissMirrors}\eqref{ItemQuadrilateral},
    are covered by the union of the
fourth critical balls around $p_\infty$
    and~$c$.
\end{lemma}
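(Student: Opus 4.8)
The plan is to cut each polygon into two convex pieces, one lying in the fourth critical ball $B(p_\infty,r_4)$ about~$p_\infty$ and the other in the fourth critical ball $B(c,r_4)$ about~$c$. Recall that a ``totally real polygon'' here lies in the totally geodesic real hyperbolic plane spanned by its vertices, and that metric balls in $\BB^{13}$ are geodesically convex; so each ball meets that plane in a convex set, and any convex subset of the plane that contains a finite point set contains the whole geodesic polygon spanned by it. Thus, once a splitting into two convex sub-polygons is exhibited, it suffices to verify that the vertices of each sub-polygon lie in the appropriate ball.

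By the $D_{24}$ reduction already made I treat only the $\sA$ and $\sB$ cases of the triangle and the $s_0$ case of the quadrilateral. For the triangle $T$ on $\tau,\rho,\tau'$, let $\mu$ be the point of $\overline{\tau\rho}$ represented by the vector $\tau+\rho$ and $\mu'$ the point of $\overline{\tau'\rho}$ represented by $\tau'+\rho$; these lie on the named segments because $\ip{\tau}{\rho}$ and $\ip{\tau'}{\rho}$ are negative reals, and they represent points of $\BB^{13}$ because $(\tau+\rho)^2$ and $(\tau'+\rho)^2$ are negative. The geodesic $\overline{\mu\mu'}$ cuts $T$ into two convex pieces, the triangle on $\rho,\mu,\mu'$ and the quadrilateral on $\tau,\mu,\mu',\tau'$. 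For the quadrilateral $Q$ on $\tau,\rho,\rho',\tau'$ (in cyclic order), take $\mu,\mu'$ on $\overline{\tau\rho}$ and $\overline{\tau'\rho'}$ represented by $\tau+\rho$ and $\tau'+\rho'$, and cut $Q$ along $\overline{\mu\mu'}$ into the quadrilaterals on $\mu,\rho,\rho',\mu'$ and on $\tau,\mu,\mu',\tau'$.

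It then remains to check that $\tau,\tau',\mu,\mu'$ lie in $B(p_\infty,r_4)$ and that $\rho,\rho',\mu,\mu'$ lie in $B(c,r_4)$. Since $\sinh^2 r_4=7/3$ we have $\cosh^2 r_4=10/3$, and, using $p_\infty^2=c^2=-3$, the condition $x\in B(p_\infty,r_4)$ becomes $|\ip{x}{p_\infty}|^2\le-10\,x^2$, and likewise for~$c$. The data needed are the norms and mutual inner products of $\tau,\rho,\tau',\rho'$ already computed; the values $\ip{\tau}{p_\infty},\ip{\tau'}{p_\infty},\ip{\rho}{p_\infty}$; and just the single value $\ip{\tau}{c}$, since $\tau-\rho$, $\tau'-\tau$ and $\rho'-\rho$ all lie in $\langle s_0\rangle\oplus L_4^{(1)}$, which is orthogonal to~$c$, so $\ip{\tau}{c}=\ip{\rho}{c}=\ip{\tau'}{c}=\ip{\rho'}{c}$. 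All these cross inner products lie in $\theta\R$ (the vertices have real coordinates, while $p_\infty$ and $c$ are $\theta$ times real-coordinate vectors). Using $\ip{v+w}{x}=\ip{v}{x}+\ip{w}{x}$, everything reduces to a short list of inequalities in $\Q(\sqrt3)$, all of which hold.

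I expect the only genuinely delicate point to be the choice of the auxiliary points $\mu,\mu'$, not the subsequent verification. The radius $r_4$ is only just large enough: the midpoints of the long edges $\overline{\tau\rho}$, $\overline{\tau'\rho}$, $\overline{\tau'\rho'}$ do not lie in $B(c,r_4)$, so one must move a definite distance toward $\rho$ (resp.\ $\rho'$), and the points represented by $\tau+\rho$, $\tau'+\rho$, $\tau'+\rho'$ — which are \emph{not} the midpoints, the two summands having unequal norms — do land inside both relevant balls. That $r_4$ cannot be reduced is visible from the fact that $\rho$ lies outside both \emph{third} critical balls. The rest of the argument is bookkeeping and arithmetic in $\Q(\sqrt3)$.
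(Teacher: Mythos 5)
Your proof is correct and follows essentially the same route as the paper's: the same auxiliary points (the paper's $m=(\tau+\rho)/2$ and $m'=(\tau'+\rho)/2$ or $(\tau'+\rho')/2$ are projectively identical to your $\mu,\mu'$), the same convexity reduction to checking that $\tau,\tau',\mu,\mu'$ lie in $B(p_\infty,r_4)$ and $\rho,\rho',\mu,\mu'$ in $B(c,r_4)$, and the same exact arithmetic check, which the paper also phrases as bounding the radicands by $10/3$.
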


\begin{proof}
    In the
    quadrilateral case, we define $m=(\tau+\rho)/2\in\overline{\tau\rho}$ and
    $m'=(\tau'+\rho')/2\in\overline{\tau'\rho'}$.  
    Because balls and (totally real) polygons are convex, it
    is enough to check that
    $\tau,\tau',m,m'$ lie at distance${}<r_4=\sinh^{-1}\sqrt{7/3}\approx1.210$
    from $p_\infty$, and that $m,m',\rho,\rho'$ lie at distance${}<r_4$
    from~$c$.  In the triangle cases we use the same argument,
    except that we define $m'=(\tau'+\rho)/2$, and we replace
    the quadrilateral $m,m',\rho,\rho'$ by the triangle
    $m,m',\rho$.  Here are the data shared by all 3 cases:
    \begin{align*}
        d(p_\infty,\tau)&{}=\cosh^{-1}\sqrt{\textstyle\frac{1}{2}+\frac{2}{3}\sqrt3}
        \approx .740
        \\
        d(p_\infty,m)&{}=\cosh^{-1}\sqrt{\textstyle\frac{1303}{1034}+\frac{1676}{1551}\sqrt3}
        \approx 1.172
        \\
        d(c,m)&{}=\cosh^{-1}\sqrt{\textstyle\frac{794}{517}+\frac{1376}{1551}\sqrt3}
        \approx 1.161
        \\
        d(c,\rho)&{}=\cosh^{-1}\sqrt{\textstyle\frac{5}{4}+\frac{13}{18}\sqrt3}
        \approx 1.032
    \end{align*}
    In the triangle resp. quadrilateral cases we have 
    \begin{equation*}
        d(c,m')=\cosh^{-1}\sqrt{\textstyle\frac{1828}{1195}+\frac{1056}{1195}\sqrt3}
        \approx 1.158
        \hbox{ resp.\ }
        \cosh^{-1}\sqrt{\textstyle\frac{1994}{1319}+\frac{1152}{1319}\sqrt3}
        \approx 1.151.
    \end{equation*}
    In the $\sA$ case resp.\ the other two cases we have
    \begin{equation*}
        d(p_\infty,\tau')=\cosh^{-1}\sqrt{\textstyle\frac{320}{429}+\frac{96}{143}\sqrt3}
        \approx .848
        \hbox{ resp.\ }
        \cosh^{-1}\sqrt{\textstyle\frac{59}{143}+\frac{96}{143}\sqrt3}
        \approx .700.
    \end{equation*}
    In the $\sA$, $\sB$ and quadrilateral cases respectively we have
    \begin{align*}
        d(p_\infty,m')=
        {}&
        \cosh^{-1}\sqrt{\textstyle\frac{4996}{3585}+\frac{256}{239}\sqrt3}
        \approx 1.195
        ,\  
        \cosh^{-1}\sqrt{\textstyle\frac{1483}{1195}+\frac{1296}{1195}\sqrt3}
        \approx 1.170,
        \\
        &\hbox{or }
        \cosh^{-1}\sqrt{\textstyle\frac{3103}{2638}+\frac{1452}{1319}\sqrt3}
        \approx 1.163.
    \end{align*}
    Finally, in the quadrilateral case we have
    $
    d(c,\rho')=\cosh^{-1}\sqrt{\textstyle\frac{443}{359} + \frac{256}{359}\sqrt3}
    \approx 1.024.
    $
    One may
    use the identity $\cosh^2=\sinh^2+1$    to avoid approximation
    when checking that these distances are less than $\sinh^{-1}\sqrt{7/3}$.
    This reduces one to checking
    that each radicand is less than $10/3$.
\end{proof}

\begin{proof}[Proof of lemma~\ref{LemPolygonsMissMirrors}.]
    As mentioned above, it suffices to prove this for the  quadrilateral
    and two
    triangles treated in lemma~\ref{LemPolygonsCoveredByBalls}.
    By that lemma, the only mirrors that can meet these polygons 
    are the $0$th, $1$st, $2$nd and $3$rd shell mirrors around
    $p_\infty$ and~$c$.  In \cite[Lemma~A.12]{AB-26gens} we explained how to enumerate these
    mirrors around~$p_\infty$, and below we will explain the corresponding 
    enumeration around~$c$.  
    Now suppose given a triangle $T\sset\BB^{13}$
    whose vertices are represented by vectors whose norms and inner products are
    negative.  Following \cite[Lemma~A.1]{AB-26gens},
    $T$ meets a mirror $s^\perp$ if and only if
    the origin lies in the triangle $\ip{T}{s}\sset\C$ whose vertices are
    the inner products of these vectors with~$s$.  In this way we 
    found all the mirrors meeting the polygons.
    We carried out all calculations using exact
    arithmetic in the field $\Q(\theta,\sqrt3)$.
    The results are as stated in the lemma.
\end{proof}

It remains to explain the enumeration of mirrors near~$c$.  We work with respect
to the basis $c,s_0;\sA,\sB,\sC,\sD
;s_5,s_4,s_3,s_2;s_7,s_8,s_9,s_{10}$ for $\C^{13,1}$,
whose inner product matrix is
$$
\begin{pmatrix}
    -3&0\\
    0&3
\end{pmatrix}
\oplus
\begin{pmatrix}
    3&\thetabar&0&0\\
    \theta&3&\theta&0\\
    0&\thetabar&3&\thetabar\\
    0&0&\theta&3
\end{pmatrix}
\oplus
\begin{pmatrix}
    3&\thetabar&0&0\\
    \theta&3&\theta&0\\
    0&\thetabar&3&\thetabar\\
    0&0&\theta&3
\end{pmatrix}
\oplus
\begin{pmatrix}
    3&\thetabar&0&0\\
    \theta&3&\theta&0\\
    0&\thetabar&3&\thetabar\\
    0&0&\theta&3
\end{pmatrix}
$$
(The backwards ordering of the $4$-tuple $s_5,\dots,s_2$ makes its inner product
matrix coincide with those of the other $4$-tuples.)
To
avoid confusion, we will 
use square brackets when writing components of vectors with
respect to this basis.


\begin{lemma}
$L$ consists of all vectors $[\frac{a}{\theta},\frac{b}{\theta};\vec{v}_1;\vec{v}_2;\vec{v}_3]$
where $a,b\in\E$ are congruent mod~$\theta$ and 
    $\vec{v}_1,\vec{v}_2,\vec{v}_3\in L_4$.
\end{lemma}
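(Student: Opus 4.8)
The plan is to sandwich $L$ between an explicit finite-index sublattice and its $\theta$-dual, and then carve $L$ out of the sandwich by a single congruence. I would set $M=\E c\oplus\E s_0\oplus L_4^{(1)}\oplus L_4^{(2)}\oplus L_4^{(3)}$, the orthogonal direct sum spanned by the chosen basis. This really is an orthogonal direct sum: the three $A_4$-subdiagrams are pairwise non-adjacent in $P^2\F_3$ and none is joined to $s_0$ (this is how the $12$-gon, hence $L_4^{(2)}$ and $L_4^{(3)}$ inside it, were chosen), while $c$ was defined orthogonal to $\langle s_0\rangle\oplus L_4^{(1)}\oplus L_4^{(2)}\oplus L_4^{(3)}$. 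Hence the Gram matrix of $M$ in this basis is the displayed block matrix, and since $c$ is a lattice vector and the other thirteen basis vectors are roots, $M\subseteq L$.

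The next step is to compute $\theta M^{*}$ in this basis. Combining $\theta L_4^{*}=L_4$ with the rank-one computation $\theta(\E u)^{*}=\tfrac1\theta\E u$ whenever $u^{2}=\pm3$ (applied to $u=c$ and $u=s_0$), one obtains $\theta M^{*}=\tfrac1\theta\E c\oplus\tfrac1\theta\E s_0\oplus L_4^{(1)}\oplus L_4^{(2)}\oplus L_4^{(3)}$; in bracket notation this is exactly the set of $[\tfrac a\theta,\tfrac b\theta;\vec v_1;\vec v_2;\vec v_3]$ with $a,b\in\E$ and $\vec v_i\in L_4$, and $\theta M^{*}/M\iso(\E/\theta\E)^{2}\iso\F_3^{2}$ has order $9$. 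Since $M\subseteq L$ and $\theta L^{*}=L$, we get $L=\theta L^{*}\subseteq\theta M^{*}$, so $L$ already lies inside the bracket-form set and only the precise sublattice is in question. Dualizing $M\subseteq L$ and using $\theta L^{*}=L$ gives $[\theta M^{*}:L]=[M^{*}:L^{*}]=[L:M]$, so $[L:M]^{2}=|\theta M^{*}/M|=9$ and $[L:M]=3$. Therefore $L/M$ is a one-dimensional $\F_3$-subspace of $\theta M^{*}/M\iso\F_3^{2}$ — a single line.

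It remains to identify which line, and for that I would exhibit one vector of $L$ lying off $M$. Take $l_\infty=(4;1,\dots,1)$, of norm $-3$. One checks $\ip{l_\infty}{p_j}=-\theta$ and $\ip{l_\infty}{l_j}=0$ for all $j$; these pairings lie in $\theta\E$ and the $p_j,l_j$ span $L$, so $l_\infty\in\theta L^{*}=L$. Its first two coordinates in the new basis are $\ip{l_\infty}{c}/c^{2}=7\theta/(-3)=7/\theta$ and $\ip{l_\infty}{s_0}/s_0^{2}=-\theta/3=1/\theta$, i.e.\ $a=7\equiv1$ and $b=1\pmod\theta$; thus the class of $l_\infty$ in $\theta M^{*}/M$ is the nonzero diagonal element $(1,1)$. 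The unique line through $(1,1)$ is the diagonal $\{(t,t):t\in\F_3\}$, so $L/M$ is the diagonal. Unwinding, $L$ consists of those $[\tfrac a\theta,\tfrac b\theta;\vec v_1;\vec v_2;\vec v_3]$ in $\theta M^{*}$ whose class is diagonal, i.e.\ with $a\equiv b\pmod\theta$, which is the assertion.

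The mathematics here is shallow; the genuine risk, and the step I would check most carefully, is the bookkeeping — translating each $s_i$ back to the correct point- or line-root, getting the edge orientations right so that the three $L_4$-blocks come out as the displayed matrix rather than its conjugate transpose, and confirming that the three $A_4$'s and $s_0$ are genuinely mutually orthogonal so that the Gram matrix is block diagonal. A sign slip there would corrupt both the computation of $\theta M^{*}$ and the reading-off of the coordinates of $l_\infty$.
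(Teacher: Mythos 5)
Your proof is correct, and it takes a genuinely different route from the paper's. The paper argues structurally: it invokes the fact that $L_4$ is a direct summand of any lattice containing it in which all inner products lie in $\theta\E$, deduces that $L=\mathcal K\oplus L_4^{(1)}\oplus L_4^{(2)}\oplus L_4^{(3)}$ for some rank-two summand $\mathcal K$, identifies $\mathcal K$ as the hyperbolic cell, and then observes that the only index-$3$ enlargements of $\E s_0\oplus\E c$ inside $\mathcal K\otimes\Q(\omega)$ with all inner products in $\theta\E$ are those got by adjoining $(s_0\pm c)/\theta$; the correct sign was built into the definition of $c$. You instead sandwich $L$ between $M$ and $\theta M^*$, compute $|\theta M^*/M|=9$ directly, use $\theta L^*=L$ and the duality identity $[M^*:L^*]=[L:M]$ to force $[L:M]=3$, and then nail down which of the four $\F_3$-lines is $L/M$ by computing the class of the known lattice vector $l_\infty$. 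Both arguments hinge on $\theta L^*=L$ and $\theta L_4^*=L_4$; yours trades the appeal to the $L_4$-summand property and the uniqueness of the hyperbolic cell for a short index count, and it has the small advantage of pinning down the diagonal line by an explicit verification rather than by recalling that the sign of $c$ was pre-arranged. Your arithmetic checks out: $\ip{l_\infty}{c}=7\theta$, $\ip{l_\infty}{s_0}=\ip{l_\infty}{p_6}=-\theta$, giving coordinates $a=7$, $b=1$, both $\equiv1\pmod\theta$ since $3=-\theta^2\in\theta\E$. As a minor simplification, you could avoid the duality-index identity entirely: since $L/M$ is an $\E$-submodule of $\theta M^*/M\iso\F_3^2$, it is an $\F_3$-subspace, and dimensions $0$ and $2$ are ruled out immediately because $\theta M^*\neq M$ while $L\neq M$ (by the $l_\infty$ computation) and $L\neq\theta M^*$ (else $\theta L^*=\theta(\theta M^*)^*=M\neq L$).
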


\begin{proof}
    This is just a computation, but we indicate why it should be true.
The $4$-tuple in each of the last three blocks of basis vectors
generates a copy of $L_4$.
Recall that $L_4$ is a summand, of any lattice
that contains it and has all inner products divisible by~$\theta$. Therefore the
semicolons delimit summands of~$L$.  
    The only possibility for the remaining summand is
    $\twobytwo{0}{\thetabar}{\theta}{0}$.  Therein, $c$ is 
    a primitive lattice vector in the orthogonal complement
    of the norm~$-3$ vector~$s_0$, and therefore must have norm~$3$ (making
    visible a fact we 
    already used).  There are only two proper
    enlargements of the lattice $\twobytwo{-3}{0}{0}{3}$
    to a lattice with all inner products divisible by~$\theta$, got by adjoining
    $(s_0\pm c)/\theta$.
    We chose the sign of~$c$ so that
    $(s_0+c)/\theta\in L$.
\end{proof}

\begin{lemma}
    \label{LemBatchesAroundC}
    The roots in batches $0,\dots,3$ around~$c$ appear in table~\ref{TabBatchesAroundC}.
\end{lemma}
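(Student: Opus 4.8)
The plan is to turn the enumeration into a short arithmetic search, exactly parallel to the treatment around $p_\infty$ in \cite[Lemma~A.12]{AB-26gens}. Using the coordinate description of $L$ established just above, write a prospective root as $s=[a/\theta,\,b/\theta;\,\vec v_1;\,\vec v_2;\,\vec v_3]$, where $a,b\in\E$ differ by an element of $\theta\E$ and $\vec v_1,\vec v_2,\vec v_3\in L_4$. Since $c$ is the first basis vector and $\ip{c}{c}=-3$, one has $|\ip{c}{s}|^2=3|a|^2$; hence $s$ lies in batch $0,1,2,3$ precisely according as $|a|^2=0,1,3,4$ (these give $|\ip{c}{s}|^2=0,3,9,12$, i.e.\ tangency of $s^\perp$ to the $0$th through $3$rd critical balls around~$c$, whose critical radii match those around $p_\infty$ because $c$ too is a norm~$-3$ lattice vector with $L\sset\theta L^*$). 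The norm condition $s^2=3$ becomes $|b|^2+\vec v_1^2+\vec v_2^2+\vec v_3^2=3+|a|^2$, so in the four relevant cases the right side is one of $3,4,6,7$.

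First I would run through the values $|a|^2\in\{0,1,3,4\}$ in turn. Because $L_4$ is the Eisenstein $E_8$ lattice --- whose norms are exactly the non-negative multiples of~$3$, with $240$ vectors of norm~$3$ and $2160$ of norm~$6$ --- and because norms of Eisenstein integers lie in $\{0,1,3,4,7,9,\dots\}$, the equation $|b|^2+\vec v_1^2+\vec v_2^2+\vec v_3^2=3+|a|^2$ admits only a handful of partitions; for each one there are only finitely many $a,b\in\E$ of the prescribed norms with $a-b\in\theta\E$, and finitely many compatible $\vec v_i$. By the coordinate lemma every such combination is automatically a lattice vector, so no vetting step is needed: one simply lists them and divides by the six unit scalars to pass from norm-$3$ vectors to mirrors. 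To shorten the list I would use the $\G$-stabilizer of $c$, which acts on $c^\perp=\spanof{s_0}\oplus L_4^{(1)}\oplus L_4^{(2)}\oplus L_4^{(3)}$ and contains both $\Aut(L_4)$ on each summand and the $12$-gon involution exchanging $L_4^{(2)}$ with $L_4^{(3)}$, so that only orbit representatives and orbit sizes need be recorded. All arithmetic is carried out exactly in $\Q(\w)$, and the result is Table~\ref{TabBatchesAroundC}.

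The only real work is bookkeeping, and that is where I expect the care to be needed: keeping the congruence $a-b\in\theta\E$ synchronized with the admissible $L_4$-vectors, counting each mirror exactly once after quotienting by units, and --- most important --- certifying the batch label of every mirror, i.e.\ that $|\ip{c}{s}|^2$ really takes the stated value, so that Table~\ref{TabBatchesAroundC} lists all and only the roots in batches $0$ through $3$. There is no conceptual obstacle: this is a finite computation, checked with a short program just as for \cite[Lemma~A.12]{AB-26gens}. These are precisely the mirrors that, by Lemma~\ref{LemPolygonsCoveredByBalls}, can possibly meet the polygons of Lemma~\ref{LemPolygonsMissMirrors}, so this enumeration is exactly what is needed to finish that lemma's proof on the side of~$c$.
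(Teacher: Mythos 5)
Your proposal matches the paper's proof essentially line for line: both extract $|a|^2\in\{0,1,3,4\}$ from the batch condition via $\ip{s}{c}=-3a/\theta$, both impose $|b|^2+\vec v_1^2+\vec v_2^2+\vec v_3^2=3+|a|^2$ together with $a\equiv b\pmod\theta$, and both finish by enumerating the finitely many partitions using the norm structure of $L_4$ (with the $240$ and $2160$ counts). The only cosmetic difference is that the paper normalizes $a$ to a fixed representative up front (``we may suppose $a=0,1,\theta,-2$ after scaling by a unit'') whereas you divide out the six unit scalars at the end; your remark about exploiting the $\Gamma$-stabilizer of $c$ is a harmless bookkeeping shortcut not mentioned in the paper.
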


\begin{table}
    \begin{tabular}{cccccr}
        \toprule
        batch&$a$&$b$&$|b|^2$&norms of $\vec{v}_1,\vec{v}_2,\vec{v}_3$&mirrors
        \\
        \midrule
        $0$&$0$&$0$&$0$&$3,0,0$&$120$
        \\
        &$0$&$\pm\omega^j\theta$&$3$&$0,0,0$&$1$
        \\
        \midrule
        $1$&$1$&$\omega^j$&$1$&$3,0,0$&$2160$
        \\
        &$1$&$-2\omega^j$&$4$&$0,0,0$&$3$
        \\
        \midrule
        $2$&$\theta$&$0$&$0$&$6,0,0$&$6480$
        \\
        &$\theta$&$0$&$0$&$3,3,0$&$172800$
        \\ 
        &$\theta$&$\pm\omega^j\theta$&$3$&$3,0,0$&$4320$
        \\
        \midrule
        $3$&$-2$&$\omega^j$&$0$&$6,0,0$&$6480$
        \\
        &$-2$&$\omega^j$&$0$&$3,3,0$&$518400$
        \\
        &$-2$&$-2\omega^j$&$4$&$3,0,0$&$2160$
        \\
        &$-2$&$\omega^j(3+(\omega\hbox{ or }\omegabar))$&$7$&$0,0,0$&$6$
        \\
        \bottomrule
    \end{tabular}
    \bigskip
    \caption{The batch $0,\dots,3$ mirrors around $c$; see lemma~\ref{LemBatchesAroundC}.
    We list all batch~$0$ roots, and one root from each scalar class of
    roots in batches $1$, $2$ and~$3$.  Listed roots have the form
    $[\frac{a}{\theta},\frac{b}{\theta};\vec{v}_1;\vec{v}_2;\vec{v}_3]$
    where $a,b$ appear in the table, and the norms of
    $\vec{v}_1,\vec{v}_3,\vec{v}_3\in L_4$ are as specified, up to permutation.
    When present, $j$ varies over $\{0,1,2\}$.  The last column gives
    the number of mirrors arising from the listed roots.}
    \label{TabBatchesAroundC}
\end{table}

\begin{proof}
    Suppose $s=[\frac{a}{\theta},\frac{b}{\theta};\vec{v}_1;\vec{v}_2;\vec{v}_3]$ 
    is a root in batch $0$, $1$, $2$ resp.\ $3$.  Then 
    $|\ip{s}{c}|^2=3\cdot(\hbox{$0$, $1$, $3$ resp.\ $4$})$.
    Since $\ip{s}{c}=-3\frac{a}{\theta}$, this shows $|a|^2=0$, $1$, $3$, resp.\ $4$.
    We may
    suppose $a=0$, $1$, $\theta$ resp.\ $-2$ after scaling by a unit.
    Since $s^2=3$, we have 
    \begin{equation}
        \label{EqFoo}
        \textstyle
        3|\frac{b}{\theta}|^2+\vec{v}_1^{\,2}+\vec{v}_2^{\,2}+\vec{v}_3^{\,2}
        =
        3+3|\frac{a}{\theta}|^2
        =
        \hbox{$3$, $4$, $6$ resp.\ $7$}
    \end{equation}
    In particular, $|b|^2$ is at most the right side.  
    After imposing the condition 
    $$\hbox{$b\cong a\cong0$, $1$, $0$ resp.\ $1$\quad (mod~$\theta$),}$$
    $b$ must be one of the possibilities listed in the table.  From
    \eqref{EqFoo} follows $\vec{v}_1^{\,2}+\vec{v}_2^{\,2}+\vec{v}_3^{\,2}=3+|a|^2-|b|^2$.
    Since the vectors of $L_4$ have norms $0,3,6,9,\dots$, the norms
    of $\vec{v}_1,\vec{v}_2,\vec{v}_3$ are as stated in table~\ref{TabBatchesAroundC},
    up to permutation.
    Counting the mirrors of each type uses
    the fact that  $L_4$ has $240$ vectors of norm~$3$
    and $2160$ of norm~$6$.
\end{proof}

To carry out the proof of lemma~\ref{LemPolygonsMissMirrors}, we prepared lists of the norm~$3$ and
$6$ vectors in $L_4$, and used these to make lists
of all possibilities for $(\vec{v}_1,\vec{v}_2,\vec{v}_3)$, 
which in turn we used to construct the roots in table~\ref{TabBatchesAroundC}.
We converted these roots to our usual coordinates, 
and then computed their mirrors' intersections with 
the polygons.  
The enumeration of mirrors 
near~$c$ also allows us work out $\rho$'s nearest mirrors:

\begin{lemma}
    \label{LemMirrorsNearRho}
            The components of~$\H$ that come 
            nearest~$\rho$,
            other than those that pass through it,
            are 
            $s_0^\perp,\dots,s_{11}^\perp$.  
\end{lemma}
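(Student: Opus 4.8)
The plan is to deduce the lemma from the enumeration of mirrors around~$c$ in table~\ref{TabBatchesAroundC}, together with one distance computation and the triangle inequality. \emph{Step 1: pin down $d(\rho,s_j^\perp)$.} The dihedral group $D_{24}\sset\PGDM$ fixes~$\rho$ and acts transitively on $\{s_0^\perp,\dots,s_{11}^\perp\}$ (with the usual sign-twisting to turn permutations into isometries), so $\d_0:=d(\rho,s_j^\perp)$ is independent of~$j$. Using $\ip{\rho}{s_0}=\ip{\rho}{p_6}=-\theta$ together with $\rho^2=-36-24\sqrt3$ and $s_0^2=3$, formula~\eqref{eq-distance-point-to-hyperplane} gives $\sinh^2\d_0=\tfrac1{36+24\sqrt3}=\tfrac{2\sqrt3-3}{36}$, so $\d_0\approx0.113$. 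In particular $\d_0>0$: none of the $s_j^\perp$ passes through~$\rho$, so these twelve mirrors are genuine candidates, and the lemma asserts they are precisely the ones realizing the minimum over all mirrors not through~$\rho$. Note also that, since $s^2=3$ and $\rho^2<0$ for every root~$s$, formula~\eqref{eq-distance-point-to-hyperplane} makes $d(\rho,s^\perp)\le\d_0$ equivalent to $|\ip{\rho}{s}|^2\le3$.

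\emph{Step 2: reduce to the roots listed around~$c$.} Suppose $s^\perp$ is a mirror with $d(\rho,s^\perp)\le\d_0$. By the triangle inequality $d(c,s^\perp)\le d(c,\rho)+\d_0$, and from the computation $d(c,\rho)=\cosh^{-1}\sqrt{\tfrac54+\tfrac{13}{18}\sqrt3}\approx1.032$ in the proof of lemma~\ref{LemPolygonsCoveredByBalls}, the right side is $<r_4=\sinh^{-1}\sqrt{7/3}\approx1.210$, the fourth critical radius around~$c$. (The inequality $d(c,\rho)+\d_0<r_4$ is verified without approximation by applying $\cosh$, expanding with the addition formula, and clearing the remaining radicals, just as the comparisons in lemma~\ref{LemPolygonsCoveredByBalls} were handled.) Hence $s^\perp$ lies in batch $0$, $1$, $2$ or~$3$ around~$c$, and so $s$ appears in table~\ref{TabBatchesAroundC}. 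The same estimate applied to the twelve $s_j$ themselves shows they occur in that table: nine of them ($s_0$ and the roots spanning $L_4^{(2)}$ and $L_4^{(3)}$) are orthogonal to~$c$ and lie in batch~$0$, while $s_1,s_6,s_{11}$ occur in a higher but still bounded batch.

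\emph{Step 3: the finite check.} Exactly as in the proof of lemma~\ref{LemPolygonsMissMirrors}, convert each root of table~\ref{TabBatchesAroundC} to the coordinates of section~\ref{t-P2-F3-model-of-L} and compute $|\ip{\rho}{s}|^2$ by exact arithmetic in $\Q(\theta,\sqrt3)$. The roots with $\ip{\rho}{s}=0$ — among them all $240$ roots of $L_4^{(1)}$ — give the mirrors through~$\rho$ and are discarded. Among the remaining listed roots, the minimum of $|\ip{\rho}{s}|^2$ is~$3$, attained exactly by $s_0,\dots,s_{11}$ up to unit scalars. Combined with the equivalence from Step~1, this gives $d(\rho,s^\perp)\ge\d_0$ for every mirror $s^\perp$ not passing through~$\rho$, with equality precisely for $s^\perp\in\{s_0^\perp,\dots,s_{11}^\perp\}$, which is the assertion.

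The only conceptually new point is Step~2: the balls around~$c$, introduced for lemma~\ref{LemPolygonsMissMirrors}, are already wide enough to capture every mirror that could lie within~$\d_0$ of~$\rho$, because $\rho$ is so close to~$c$ and $\d_0$ is so small. The labor is all in Step~3 — the batch $0$--$3$ mirrors around~$c$ number in the hundreds of thousands — but it is mechanical and is carried out by the same program used for lemma~\ref{LemPolygonsMissMirrors}. The one thing to watch is the numerical margin in Step~2 (about $1.145$ against about $1.210$), which is comfortably positive and survives the exact verification.
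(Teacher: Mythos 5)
Your proposal is correct and is essentially the paper's own argument: the paper likewise observes that $d(\rho,s_j^\perp)+d(\rho,c)<r_4$ (with the same three numerical values, verified exactly by an addition formula), concludes via the triangle inequality that any closer-or-equal mirror missing $\rho$ must lie in batches $0$--$3$ around~$c$, and then inspects those batches using Table~\ref{TabBatchesAroundC}. The extra details you supply — translating $d(\rho,s^\perp)\le\d_0$ into $|\ip{\rho}{s}|^2\le3$, and checking that $\d_0>0$ — are implicit in the paper and unobjectionable.
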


    \begin{proof}
        One can check
        \begin{equation*}
            \textstyle
        \sinh^{-1}\sqrt{-\frac{1}{12}
    +\frac{1}{18}\sqrt3}
    +\cosh^{-1}\sqrt{\frac{5}{4}+
    \frac{13}{18}\sqrt3}
        <
            \sinh^{-1}\sqrt{\frac{7}{3}}
        \end{equation*}
    The 
        terms on the left are the distances from $\rho$
        to 
        $s_j^\perp$ and~$c$ respectively.
        They are approximately 
$.113$ and $1.032$.  The right side is $r_4\approx1.210$.
        It follows that the mirrors nearest~$\rho$,
        subject to missing it, lie in 
        batches $0,\dots,3$
    around~$c$.  Examining these batches proves the lemma.
        To 
        avoid approximation when checking the inequality,
        apply $\sinh$ to both sides and then use its
        ``angle sum'' formula.
\end{proof}

\section{The Deligne-Mostow 9-ball}
\label{SecDeligneMostow}

\noindent
In their celebrated papers \cite{Deligne-Mostow} and 
\cite{Mostow}, 
Deligne and Mostow related many ball quotients to various moduli
spaces of tuples in $\cpone$, and their work 
has been re-examined from several points of view
\cite{Thurston}\cite{CHL}.
The largest-dimensional example  is an isomorphism between
$(\BDM-\HDM)/\PGDM$ and the moduli space $\M_{12}$
of unordered $12$-tuples of distinct points in $\cpone$.
Here $\BDM=\BB(\LDM)$, where the lattice $\LDM$
was defined in section \ref{subsec-DM-lattice}, 
$\HDM$ is the union of the mirrors of the
roots of~$\LDM$, and $\GDM$ is the isometry group of~$\LDM$.
The Deligne-Mostow isomorphism is not very explicit.  Our
goal is to make it more explicit, by
saying which point of~$\BDM/\PGDM$ corresponds
to the standard basepoint for $\M_{12}$, and which loops
in $(\BDM-\HDM)/\PGDM$ correspond to the standard generators of 
$\Br_{12}(\cpone)$.
This result is interesting without any monstrous connection at all.

Inside~$L$, the sublattice $\LDM$ is the orthogonal 
complement of the span $L_4$
of specific roots $\sA,\dots,\sD$ of~$L$.  
Its isometry group $\GDM$ 
is a subgroup of~$\Gamma$ because $\LDM$ is a summand of~$L$.
Although we defined the hyperplane arrangement $\HDM$ in terms of the
roots of~$\LDM$, it can also be defined by as the set of hyperplanes
arising from intersections of~$\BDM$ with components of~$\H$.
(See lemma~\ref{LemMirrorsMeetingBDM} below.)
In section~\ref{SecChange}
we defined $\rho$ as the projection of $\tau\in\BB^{13}$
to $\BDM$, and we defined specific roots 
$s_0,\dots,s_{11}$ of $\LDM$.  By lemma~\ref{LemMirrorsNearRho}, their mirrors
are the mirrors of $\HDM$ that come closest to~$\rho$.  We define
$$
\rho_j:=\hbox{ the meridian }M_{\rho,s_j}\in\piorb((\BDM-\HDM)/\PGDM,\rho).
$$
Now we can state the main theorem of this section.

\begin{theorem}
    \label{ThmDeligneMostow}
    There is a complex-analytic orbifold isomorphism between
  $\M_{12}$   and $(\BDM-\HDM)/\PGDM$, which identifies
    their respective basepoints $T$ and $\rho$, and identifies
    the $\rho_j\in\piorb\bigl((\BDM-\HDM)/\PGDM,\rho)$ with
    the elements of $\piorb(\M_{12},T)$ denoted by the same
    symbols in section~\ref{SecBraidGroups}.  

    In particular, 
    $\rho_0,\ldots,\rho_{11}$
    generate $\piorb((\BDM-\HDM)/\PGDM,\rho)$, with
    defining relations stated in the $n=12$ case of
    theorem~\ref{ThmPi1ModuliSpace}.
\end{theorem}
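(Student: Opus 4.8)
The plan is to take the Deligne--Mostow isomorphism itself as input and then use dihedral symmetry to locate the basepoint and name the loops. First I would record the Deligne--Mostow theorem \cite{Deligne-Mostow}\cite{Mostow} in the form needed: for the weight system $(\tfrac16,\dots,\tfrac16)$ on $12$ points, the period map of the cyclic sextic covers $y^6=\prod_j(x-x_j)$ of $\cpone$ descends to a complex-analytic orbifold isomorphism $\Phi\colon\M_{12}\iso\BDM/\PGDM$ carrying the (irreducible) discriminant divisor of tuples with a repeated point onto $\HDM/\PGDM$; since all $12$ weights are equal, $\Phi$ is compatible with the $S_{12}$-action permuting the points, and in particular with its dihedral subgroups. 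Restricting, $\Phi$ is a complex-analytic orbifold isomorphism $\M_{12}\iso(\BDM-\HDM)/\PGDM$, and it identifies the two orbifold fundamental groups. It remains to see that $\Phi$ carries $T$ to $\rho$ and the braid generators $\rho_j$ of section~\ref{SecBraidGroups} to the meridians $\rho_j=M_{\rho,s_j}$.

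For the basepoint: $T=(1,\zeta,\dots,\zeta^{11})$ is, up to $\PGL_2\C$, the unique $12$-point subset of $\cpone$ whose $\PGL_2\C$-stabilizer contains a dihedral group $D_{24}$ (generated by $z\mapsto\zeta z$ and $z\mapsto1/z$), realizing the cyclic-plus-reflection symmetry of a regular $12$-gon. By the compatibility of $\Phi$ with this $D_{24}$, the image of $T$ in $(\BDM-\HDM)/\PGDM$ is fixed by the induced $D_{24}$; i.e.\ the period point of $T$ is fixed by a copy of $D_{24}$ inside $\PGDM$. On the other side, section~\ref{subsec-DM-lattice} picks out the ``standard'' $D_{24}\sset\PGDM$ permuting the nodes $s_0,\dots,s_{11}$ of the $\Atilde_{11}$-diagram, whose unique fixed point is $\rho$. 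Granting that these two copies of $D_{24}$ are conjugate in $\PGDM$, the period point of $T$ is $\PGDM$-equivalent to $\rho$, so $\Phi(T)=\rho$ as points of the quotient, and (after conjugating) $\Phi$ intertwines the two $D_{24}$-actions.

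For the meridians: the $12$ braid generators $\rho_0,\dots,\rho_{11}$ form a single orbit of the moduli-side $D_{24}$ (which cyclically shifts and reflects them), as do the meridians $M_{\rho,s_0},\dots,M_{\rho,s_{11}}$ under the standard $D_{24}$; writing $r$ for a generator of the cyclic part on either side, with $r^{j}\rho_0 r^{-j}=\rho_j$ and $r^{j}s_0=s_j$, the $D_{24}$-equivariance reduces the whole identification to the single statement $\Phi_*(\rho_0)=M_{\rho,s_0}$ (then $\Phi_*(\rho_j)=r^j M_{\rho,s_0}r^{-j}=M_{\rho,r^js_0}=M_{\rho,s_j}$). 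Both sides of $\Phi_*(\rho_0)=M_{\rho,s_0}$ are meridians, hence conjugate; to see they are equal I would follow the period map as the two points at $1$ and $\zeta^{-1}$ collide: the image point runs into a single mirror of $\HDM$, which---being visible from $\rho$ by lemma~\ref{LemMirrorsNearRho}, and by the $D_{24}$-equivariance---must be one of $s_0^\perp,\dots,s_{11}^\perp$, and the labelling fixed in section~\ref{SecChange} is exactly the one making it $s_0^\perp$. Comparing the orientation conventions of section~\ref{subsec-orbifold-fundamental-group}---and noting that the half-twist around two colliding points carries weight $\tfrac16+\tfrac16=\tfrac13$, so its monodromy is an order-$3$ complex reflection in that root, matching the triflection attached to $M_{\rho,s_0}$---pins down the sign. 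The generators-and-relations assertion for $\piorb((\BDM-\HDM)/\PGDM,\rho)$ is then immediate from the $n=12$ case of theorem~\ref{ThmPi1ModuliSpace}.

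I expect the two crux points to be the conjugacy of the two copies of $D_{24}$ in $\PGDM$ in the second step, and the equality $\Phi_*(\rho_0)=M_{\rho,s_0}$ in the third. Both are best handled by making the Deligne--Mostow period map explicit for the round $12$-tuple---computing the period vector of $y^6=x^{12}-1$ with respect to a basis adapted to its cyclic and dihedral automorphisms---rather than by abstract group theory; everything else is bookkeeping, using theorem~\ref{ThmPi1ModuliSpace}, lemma~\ref{LemMirrorsNearRho}, and the explicit lattice data of section~\ref{SecChange}.
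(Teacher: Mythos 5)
Your outline parallels the paper's proof: take the Deligne--Mostow isomorphism as a black box, pin down the basepoint by $D_{24}$-stabilizer uniqueness, then identify standard braid generators with meridians. The basepoint step is essentially the paper's (the paper phrases it slightly more cleanly: it shows $\rho$'s $\PGDM$-stabilizer is \emph{exactly} $D_{24}$, transfers that stabilizer through the orbifold isomorphism, and invokes uniqueness of the stable $12$-tuple with $D_{24}$-symmetry, so there is no need to worry about whether two $D_{24}$'s are conjugate in $\PGDM$).

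The genuine gap is in the meridian step. You argue moduli $\to$ ball: collide the points at $1$ and $\zeta^{-1}$ and assert that the image path in $\BDM$ lands on one of $s_0^\perp,\dots,s_{11}^\perp$ because of lemma~\ref{LemMirrorsNearRho} and $D_{24}$-equivariance. This does not follow. The period map carries no metric compatibility, so there is no reason the image in $\BDM$ of a ``collide two adjacent points'' path should be a short geodesic that terminates on one of the \emph{nearest} mirrors; lemma~\ref{LemMirrorsNearRho} only names which mirrors are nearest, it says nothing about where a given collision path ends. And $D_{24}$-equivariance only shows the twelve collision paths end on a $D_{24}$-orbit of mirrors; it doesn't single out the nearest orbit, and in any case a long path could wind around other mirrors before hitting its target, changing the element of $\piorb$ without changing the underlying element of $\PGDM$. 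A priori all you know is that $\Phi_*(\rho_0)$ is \emph{conjugate} to $M_{\rho,s_0}$, not equal to it, and that is exactly the gap.

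The paper closes this gap by going ball $\to$ moduli: it starts from the specific short geodesic $\overline{\rho\rho'}$ (which, by lemma~\ref{LemMirrorsNearRho} and the calculations behind lemma~\ref{LemPolygonsMissMirrors}, meets only $s_0^\perp$ and only at the endpoint), transfers it to a path of $12$-tuples, and then --- via the antiholomorphic-involution analysis in lemmas~\ref{Lem2possibilitiesForChihat}, \ref{LemExcludeBadChihatCandidate}, \ref{LemBraidRepresentingSegment} --- shows that this motion can be realized with all twelve points staying on the unit circle, two of them colliding at $t=1$. That circle-preservation is the essential content your proposal doesn't supply. Your suggested fallback --- explicitly computing the period vector of $y^6=x^{12}-1$ in a $D_{24}$-adapted basis --- could plausibly replace that analysis, but you would have to actually carry it out, including tracking the branch of the period map along the collision path and matching your lattice basis to the $s_j$'s; at present this is a program, not a proof. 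Also note the weight--$\tfrac13$ half-twist argument you invoke at the end only recovers that the local monodromy is an order-$3$ triflection, which is the conjugacy-class statement, not the needed equality of elements of $\piorb$.
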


    Deligne and Mostow constructed this isomorphism, but made no
    statements about basepoints and fundamental group elements.
    Our job is to prove the extra statements.
    First we recall 
    the notation of section~\ref{SecBraidGroups} in the case
    $\Sigma=\cpone$ and $n=12$.   
    Namely,
    $X$ and $Y$ are the spaces of
    ordered and unordered $12$-tuples in $\cpone$,  $X^\circ$ and 
    $Y^\circ$
    are their subspaces consisting of 
    $12$-tuples of distinct points, and $\M_{12}=Y^\circ/\PGL_2\C$.
    Additionally, we define $\Xst$
    as the subset of~$X=(\cpone)^{12}$ 
    consisting of ordered $12$-tuples in~$\cpone$
    with no
    points of multiplicity${}>5$, and $\Yst=\Xst/S_{12}$.
    It is not important in this paper, but 
    the superscript indicates stability in the sense of geometric
    invariant theory.

    The essential (for us)
    properties of the Deligne-Mostow isomorphism are:
    \begin{enumerate}
        \item
            \label{ItemHomeomorphism}
            There is a complex algebraic variety isomorphism 
            $f$ from
            $\Yst/\PGL_2\C$ to $\BDM/\PGDM$.
            (All we need is 
            a homeomorphism.)
        \item
            \label{ItemOrbifoldIso}
            The restriction of $f$ to $\M_{12}=Y^\circ/\PGL_2\C$
            is a complex analytic
            orbifold isomorphism, onto $(\BDM-\HDM)/\PGDM$.
        \item
            \label{ItemDiscriminant}
            Those points of $\Yst/\PGL_2\C$
            that are represented by $12$-tuples with 
            a single multiple point, that point having multiplicity~$2$,
            are identified under~$f$ with the points of $\BDM$
            that lie in exactly one component of~$\HDM$ (or rather,
            with their images in
            $\BDM/\PGDM$).
        \item
            \label{ItemUniversalTriflection}
            $X^\circ/\PGL_2\C$ resp.\ 
            $\BDM-\HDM$ is the covering space of $\M_{12}\iso(\BDM-\HDM)/\PGDM$ 
            corresponding to the subgroup 
            of its orbifold fundamental group
            that is generated by the squares resp.\ cubes
            of all meridians.  (Meridians in $\piorb((\BDM-\HDM)/\PGDM)$
            correspond to  the conjugacy class in $\piorb(\M_{12})$
            that contains the standard
            braid generators.)
    \end{enumerate}

    \noindent
We prove our earlier claim about the two descriptions
of~$\HDM$, and then proceed to the proof of theorem~\ref{ThmDeligneMostow}.

\begin{lemma}
    \label{LemMirrorsMeetingBDM}
    Every mirror of~$L$ that meets $\BDM$
    is the mirror of a root of~$\LDM$, except for the
    $40$ mirrors which contain~$\BDM$.
\end{lemma}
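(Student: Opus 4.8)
The plan is to exploit the orthogonal splitting $L=L_4^{(1)}\oplus\LDM$. This is a direct sum of \emph{lattices}, not merely of complex vector spaces: $L_4^{(1)}$ is an orthogonal summand of~$L$ because all inner products in~$L$ are divisible by~$\theta$ and $\theta L_4^*=L_4$ (section~\ref{subsec-E8}), and $\LDM$ was defined as its orthogonal complement in~$L$. Consequently every root $s$ of~$L$ can be written uniquely as $s=u+w$ with $u\in L_4^{(1)}$ and $w\in\LDM$, and for any $v\in\LDM\tensor\C$ one has $\ip{v}{s}=\ip{v}{w}$. Hence the mirror $\BB(s^\perp)$ meets $\BDM=\BB(\LDM\tensor\C)$ exactly in the locus $\ip{v}{w}=0$; that is, $\BB(s^\perp)\cap\BDM=\BB\bigl(w^\perp\cap(\LDM\tensor\C)\bigr)$, where $w^\perp$ is taken inside $\LDM\tensor\C$.

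I would then split on~$w$. If $w=0$ then $s\in L_4^{(1)}$ and $\BB(s^\perp)\supseteq\BDM$; these are exactly the $40$ mirrors containing~$\BDM$, one for each scalar class of the $240$ roots of $L_4^{(1)}$ (section~\ref{SecChange}). Now suppose $w\neq0$. The space $\LDM\tensor\C$ has signature $(9,1)$, so if $\ip{w}{w}\leq0$ then $w^\perp$ is positive semidefinite (positive definite when $\ip{w}{w}<0$, and semidefinite with radical $\spanof{w}$ when $w$ is null), hence contains no negative line; then $\BB(s^\perp)$ misses $\BDM$ altogether and there is nothing to prove. So whenever the mirror meets $\BDM$ with $w\neq0$, we must have $\ip{w}{w}>0$. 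On the other hand $\ip{u}{u}$ is a norm in $L_4^{(1)}$, hence a non-negative multiple of~$3$, so $\ip{w}{w}=3-\ip{u}{u}\leq3$; combined with $\ip{w}{w}>0$ this forces $\ip{w}{w}=3$ and $\ip{u}{u}=0$. Positive-definiteness of $L_4^{(1)}$ then gives $u=0$, so $s=w$ is a root of~$\LDM$ and $\BB(s^\perp)$ is exactly the mirror of a root of~$\LDM$, as claimed.

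There is no serious obstacle here: the proof is just bookkeeping with the lattice decomposition. The only ingredients are the elementary fact that, in a Lorentzian space, the orthogonal complement of a vector of nonpositive norm contains no negative line, together with two facts already in the paper --- that $L_4^{(1)}$ is a positive-definite orthogonal summand of~$L$, and that its norms are non-negative multiples of~$3$. As a byproduct, the case analysis re-derives from the present viewpoint that the mirrors containing~$\BDM$ are precisely those of the $240$ roots of $L_4^{(1)}$.
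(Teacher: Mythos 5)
Your proof is correct, and it is essentially the same argument as the paper's, differing only in packaging. The paper observes that if the mirror of a root $r$ meets $\BDM$, then the $\E$-span of $r$ and $L_4^{(1)}$ lies in the orthogonal complement of a negative line and is therefore positive-definite; it then splits $r$ inside that small positive-definite lattice and uses that the minimal norm of $L_4$ is~$3$. You instead split $r=u+w$ once and for all using the global orthogonal decomposition $L=L_4^{(1)}\oplus\LDM$, and then recover the same constraint by a sign analysis on $\ip{w}{w}$ inside the Lorentzian space $\LDM\tensor\C$. The two routes encode the same signature fact (the paper packs it into ``the span is positive-definite,'' you unpack it into ``$w^2\le0$ implies $w^\perp$ has no negative line''), and both conclude with the identical norm computation $3=s^2=u^2+w^2$ with $u^2\in3\Z_{\ge0}$. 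Your version has the mild advantage of making explicit the criterion for a mirror to meet $\BDM$ (namely $w=0$ or $w^2>0$), and of spelling out the justification that the paper leaves implicit; it proves nothing more and nothing less.
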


\begin{proof}
    Suppose $\BDM$ meets the mirror of a root~$r$.
    Then the $\E$-span of $r$ and $L_4$ is positive-definite.
    Also, all its inner products are divisible by~$\theta$.
    Therefore $L_4$ is a summand of it. 
    Since $r^2$ and the
    minimal norm of $L_4$ are both~$3$, we must have
    $r\in L_4$ or $r\perp L_4$.
\end{proof}

    \begin{lemma}
        \label{LemTandRHOcorrespond}
        The isomorphism $f$ identifies (the image of) $T$ in $\M_{12}$,
        with (the image of) $\rho$ in $(\BDM-\HDM)/\PGDM$.
    \end{lemma}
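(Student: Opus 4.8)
The plan is to characterize each of the two basepoints intrinsically --- by the presence of a dihedral group of order~$24$ in its orbifold local group --- and then to transport this characterization across the orbifold isomorphism~$f$.

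On the moduli side, since $\PGL_2\C$ acts freely on $X^\circ(\cpone)$, the orbifold $\M_{12}=(X^\circ(\cpone)/\PGL_2\C)/S_{12}$ has, at the image of a tuple~$x$, local group canonically isomorphic to the group of M\"obius transformations permuting the $12$ points of~$x$. For $T=(1,\zeta,\dots,\zeta^{11})$ this is the dihedral group $D_{24}$ generated by $z\mapsto\zeta z$ and $z\mapsto 1/z$. I would then argue that $T$ is the \emph{only} point of $\M_{12}$ whose local group contains a copy of~$D_{24}$. Such a point is represented by $12$ distinct points of $\cpone$ invariant under some $D_{24}\subseteq\PGL_2\C$; by the classification of finite subgroups of $\PGL_2\C$ all such $D_{24}$'s are conjugate, so we may take the standard one, whose orbits all have size $2$, $12$ or $24$ --- the size-$12$ orbits being regular $12$-gons, hence $\PGL_2\C$-equivalent to~$T$. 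As $12$ is not a sum of orbit sizes other than $12$ itself, the invariant set is a single size-$12$ orbit, so it is $\PGL_2\C$-equivalent to~$T$.

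On the ball side, $\rho$ is by construction fixed by a copy of $D_{24}$ in~$\PGDM$, and $\rho\notin\HDM$. This last point must be checked; it is visible from the computation proving lemma~\ref{LemMirrorsNearRho} (the components of $\H$ through~$\rho$ are exactly the $40$ mirrors containing $\BDM$, none of which is the mirror of a root of~$\LDM$), or directly from the coordinates of~$\rho$ recorded in section~\ref{SecChange}. Hence the image of~$\rho$ is a point of the orbifold $(\BDM-\HDM)/\PGDM$, with local group the full $\PGDM$-stabilizer of~$\rho$, which contains that~$D_{24}$. Since $f$ is an orbifold isomorphism from $\M_{12}$ onto $(\BDM-\HDM)/\PGDM$ (property~\eqref{ItemOrbifoldIso}), it is a bijection matching local groups; so $f^{-1}$ of the image of~$\rho$ is a point of $\M_{12}$ whose local group contains a copy of~$D_{24}$, and by the previous paragraph this point is~$T$. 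Therefore $f(T)$ is the image of~$\rho$, which is the assertion of the lemma.

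I expect the uniqueness step to be where the real content sits, although it only invokes the classical list of finite subgroups of $\PGL_2\C$ together with a short orbit count; the incidental verification that $\rho\notin\HDM$ is the other thing to nail down. An alternative to the uniqueness argument would use functoriality of the Deligne--Mostow construction: an automorphism of the configuration~$T$ acts on the associated cyclic branched cover of~$\cpone$ and hence on its periods, giving an element of $\PGDM$ fixing $f(T)$; identifying the resulting homomorphism $D_{24}\to\PGDM$ --- via property~\eqref{ItemDiscriminant}, by matching colliding pairs among the $12$ points with components of~$\HDM$ --- with the standard $D_{24}\subseteq\PGDM$ would again pin $f(T)$ to the image of~$\rho$. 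I would use whichever of these is cleaner to make fully rigorous; the self-contained uniqueness route looks preferable.
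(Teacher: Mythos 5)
Your proof is correct and follows essentially the same strategy as the paper: pick out the two basepoints by a $D_{24}$ in their local groups, use that $f$ is an orbifold isomorphism (hence matches local groups), and appeal to the uniqueness, up to projective equivalence, of a stable $12$-point configuration admitting a $D_{24}$. The one noteworthy variation is that you get by with showing the local group at~$\rho$ merely \emph{contains} a $D_{24}$, whereas the paper establishes that it is \emph{exactly} $D_{24}$ (invoking lemma~\ref{LemMirrorsNearRho} for the upper bound via the twelve nearest mirrors); your formulation shifts the burden entirely to the uniqueness statement, which you prove in more detail than the paper (conjugacy of $D_{24}$'s in $\PGL_2\C$ plus the orbit-size count $2,12,24$), so the two routes end up with comparable total work. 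Your flag that $\rho\notin\HDM$ needs checking is also apt; the paper handles it implicitly via lemma~\ref{LemMirrorsNearRho}.
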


    \begin{proof}
        The main point is that $\rho$'s $\PGDM$-stabilizer is~$D_{24}$.
        This uses lemma~\ref{LemMirrorsNearRho}, ie that
        the mirrors of $s_0,\dots,s_{11}$ are the components of~$\HDM$
        closest to~$\rho$.  So any element of the stabilizer permutes them,
        and indeed
        is determined by how it permutes them. (Consider its action
        on their points closest to~$\rho$.)   Since these roots
        form a $12$-gon, the stabilizer is no larger than~$D_{24}$.
        And a $D_{24}$ in the stabilizer is 
        visible (see section~\ref{subsec-DM-lattice}).

        By the orbifold isomorphism, the corresponding point of $X_{0,12}/\PGL_2\C$
        also has stabilizer~$D_{24}$, this time in the  $S_{12}$-action
        on $X^\circ/\PGL_2\C$.  Up to projective equivalence there is only one 
        stable (unordered)
        $12$-tuple with stabilizer isomorphic to~$D_{24}$, namely~$T$.
    \end{proof}

    Let $\rho'$ be the projection of $\rho$ to $s_0^\perp$.  
    The segment $\overline{\rho\rho'}$ corresponds via~$f$ to some
    motion in~$\cpone$ of the $12$ points of~$T$.  
 The rest of the proof amounts to formulating this precisely and 
 then identifying the motion.
 To start, observe that the orbifold covering spaces
    $X^\circ/\PGL_2\C\to \M_{12}$ and $\BDM-\HDM\to(\BDM-\HDM)/\PGDM$
    are manifold covers of the same orbifold.  
    Therefore \eqref{ItemOrbifoldIso} implies:
    there is a diffeomorphism $\ftilde$ from 
    a neighborhood $E_T$ of (the image of) $T$ in $X^\circ/\PGL_2\C$,
    to a neighborhood $E_\rho$ of $\rho$ in $\BDM-\HDM$, that lies over~$f$
    and is $D_{24}$-equivariant.

    Together with analytic continuation, this allows us to transfer
    any path in~$\BDM-\HDM$ beginning at~$\rho$,
    to a path in $X^\circ/\PGL_2\C$ beginning at~$T$.   
    In particular:
    there exists 
    a unique path $\gamma:[0,1)\to X^\circ/\PGL_2\C$, 
    whose initial segment corresponds
    to $\overline{\rho\rho'}$ under~$\ftilde$, and which 
    has the same projection as $\overline{\rho\rho'}-\{\rho'\}$
    to $\M_{12}\iso(\BDM-\HDM)/\PGDM$.
    Because $\Xst/\PGL_2\C$ is finite over $\Yst/\PGL_2\C$, 
    $\gamma$ extends continuously to $[0,1]$.  Finally, because
    $\Xst$ is a $\PGL_2\C$-bundle over $\Xst/\PGL_2\C$,
    we may lift $\gamma$ to
    a path $\beta(t)=(\beta_0,\dots,\beta_{11})\in\Xst$ of
    ordered $12$-tuples in~$\cpone$.  

    Our analysis of $\beta$ will rely on
    a study of certain
    anti-holomorphic
    involutions of $\M_{12}$ and $\BDM$.
    Write $\chi:\cpone\to\cpone$ for the inversion map 
    across the unit circle, ie $z\mapsto 1/\zbar$.
    We  also write $\chi$ for the induced maps on
    $\Xst$ and $\Yst$ and their quotients by $\PGL_2\C$.  
    As a self-map  of $\cpone$, the anti-holomorphic involution
    $\chi$ preserves~$T$ pointwise
    and commutes with $T$'s setwise stabilizer 
    $D_{24}\sset\PGL_2\C$.  Another way to say this is that
    $\chi$'s action on $X^\circ/\PGL_2\C$ preserves the image 
    therein of $T$,
    and commutes with the $D_{24}\sset S_{12}$ stabilizing this
    image.  
    
    We now construct an antiholomorphic involution $\chihat$ of
    $\BDM$ that fixes~$\rho$ and 
    corresponds to $\chi$ under $\ftilde$.  The action of~$\chi$ on 
    $\M_{12}=(\BDM-\HDM)/\PGDM$
    sends meridians to inverses of meridians, and therefore preserves
    the subgroups of its orbifold fundamental group that correspond
    to the covers $X^\circ/\PGL_2\C$ and $\BDM-\HDM$.  So $\chi$
    has a lift to an antiholomorphic involution of $\BDM-\HDM$
    that fixes~$\rho$.  In fact, it has~$24$ such lifts, differing
    by composition with elements of $D_{24}$.
    Since $\ftilde:E_T\to E_\rho$ is an isomorphism of
    orbifold covering spaces, it identifies the
    lifts of $\chi$ to $E_T$ with the lifts of $\chi$ to $E_\rho$.
    We define $\chihat$ to be the lift to
    $\BDM-\HDM$ that corresponds to~$\chi$ under~$\ftilde$.
    (We can now discard $E_T$ and $E_\rho$.)
    Then the Riemann extension theorem (applied to
    the composition of $\chihat$ with any chosen anti-holomorphic
    involution of the
    ball) shows that
    $\chihat$ extends to  $\BDM$.  We remark that anti-holomorphic
    automorphisms of the complex ball are complex hyperbolic
    isometries.

    \begin{lemma}
        \label{Lem2possibilitiesForChihat}
        There are only two possibilities for~$\chihat$: it is either
        the anti-holomorphic involution $\chihat_0$  
        of $\BDM$ got from complex conjugation in the $P^2\F_3$
        model of~$L$, or the composition of $\chihat_0$ with 
        the central involution  $Z$ 
        of the $\PGDM$-stabilizer $D_{24}$ of~$\rho$.
    \end{lemma}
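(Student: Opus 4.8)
The plan is to exploit three properties of $\chihat$: it is an antiholomorphic involution of $\BDM$ fixing $\rho$; it preserves $\HDM$ (being a lift of the $\chi$-action on $(\BDM-\HDM)/\PGDM$); and it centralizes the $\PGDM$-stabilizer $D_{24}$ of $\rho$. The last property is inherited from the construction: $\chihat$ corresponds under the $D_{24}$-equivariant diffeomorphism $\ftilde\colon E_T\to E_\rho$ to the involution $\chi$ of $X^\circ/\PGL_2\C$, and $\chi$ commutes with $D_{24}\sset S_{12}$; hence $\chihat$ commutes with $D_{24}$ on $E_\rho$, and therefore on all of $\BDM$, since two holomorphic or antiholomorphic maps of a connected complex manifold agreeing on an open set agree everywhere.

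Next I would introduce $\chihat_0$ and record its properties. In the $P^2\F_3$ coordinates of section~\ref{t-P2-F3-model-of-L}, complex conjugation negates the point-roots and fixes the line-roots, so it preserves $\LDM=\spanof{s_0,\dots,s_{11}}$ (and $L_4^{(1)}$), and hence restricts to an antiholomorphic involution $\chihat_0$ of $\BDM=\BB(\LDM)$; it fixes $\rho$ because $\rho$ has real coordinates (section~\ref{SecChange}); and it carries roots of $\LDM$ to roots, so it preserves $\HDM$. One must also check that $\chihat_0$ centralizes $D_{24}$ (projectively). Here it helps to note that $D_{24}\sset\PGDM$ is defined over $\Q(\sqrt{-3})$ and that conjugation by $\chihat_0$ acts on matrices of elements of $\PGDM$ by the nontrivial automorphism of $\Q(\sqrt{-3})$; so the claim reduces to checking that the generators of $D_{24}$ — a signed permutation of the $12$-gon $s_0,\dots,s_{11}$ together with the point--line duality of section~\ref{t-P2-F3-model-of-L} — are fixed projectively by that automorphism, which is a direct computation with the explicit generators.

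Granting this, set $g=\chihat\circ\chihat_0$: a holomorphic isometry of $\BDM$ that fixes $\rho$, centralizes $D_{24}$, and preserves $\HDM$. By lemma~\ref{LemMirrorsNearRho} the components of $\HDM$ closest to $\rho$ are exactly $s_0^\perp,\dots,s_{11}^\perp$, so $g$ permutes this $12$-element set; since isometries preserve $|\ip{\cdot}{\cdot}|$ between roots, $g$ respects the $12$-cycle of adjacencies $\ip{s_j}{s_{j\pm1}}\neq0$, so it acts on those mirrors through the automorphism group of the $12$-gon, which (as in the proof of lemma~\ref{LemTandRHOcorrespond}) is exactly the image of $D_{24}$. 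Since $g$ centralizes $D_{24}$, its induced permutation lies in the centre of that group, hence is either the identity or the permutation induced by the central involution $Z$ of $D_{24}$. Replacing $g$ by $gZ$ if necessary, we may assume $g$ fixes each mirror $s_j^\perp$, hence fixes $\rho$ together with the point $q_j$ on $s_j^\perp$ closest to $\rho$; then $g$ fixes each geodesic $\overline{\rho q_j}$ pointwise, so $dg_\rho$ fixes each of the $12$ initial directions. Those directions are the images of $s_0,\dots,s_{11}$ in $T_\rho\BDM\iso\rho^{\perp}$, and they span it over $\C$ because $s_0,\dots,s_{11}$ span $\LDM$ over $\E$ (and $\rho\notin s_j^\perp$ by lemma~\ref{LemMirrorsNearRho}); since $g$ is holomorphic, $dg_\rho=\mathrm{id}$ and hence $g=\mathrm{id}$. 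Therefore $\chihat\circ\chihat_0\in\{1,Z\}$, i.e.\ $\chihat\in\{\chihat_0,Z\chihat_0\}$.

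The main obstacle is the verification that $\chihat_0$ centralizes $D_{24}$: the point--line duality of $P^2\F_3$ does not act on the coordinates $(x_0;x_1,\dots,x_{13})$ as a signed permutation — the line-roots are not coordinate vectors — so one cannot simply read off its commutation with complex conjugation and must track it through the $P^2\F_3$ model (or argue abstractly via the Galois action above). Everything else is bookkeeping: the passage to the holomorphic $g$, the centre-of-the-dihedral-group computation, and the rigidity step, which uses only the already-known fact that $s_0,\dots,s_{11}$ span $\LDM$.
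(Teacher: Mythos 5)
Your argument is correct and rests on the same core idea as the paper's: the antiholomorphic involution permutes the 12 mirrors nearest $\rho$, commuting with $D_{24}$ constrains that permutation to the centre $\{1,Z\}$ of $D_{24}$, and an isometry of $\BDM$ fixing $\rho$ is then pinned down by its action on the 12 closest points $q_j$. The one genuine difference is that you pass to the holomorphic composite $g=\chihat\circ\chihat_0$ before running this argument, whereas the paper applies it directly to $\chihat$ and then simply exhibits $\chihat_0$ and $\chihat_0\circ Z$ as realizing the two cases. That small change has a cost: you need $\chihat_0$ to centralize $D_{24}$, a fact the paper's route never invokes, and you flag it as ``the main obstacle'' without actually carrying it out. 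It is, in fact, immediate from ingredients you already have in hand: $\chihat_0$ sends each $s_j$ to $\pm s_j$, hence preserves every mirror $s_j^\perp$; so for $h\in D_{24}$, the conjugate $\chihat_0 h\chihat_0^{-1}$ (which again lies in the $\PGDM$-stabilizer of $\rho$, since $\chihat_0$ is the Galois involution of $\LDM$ and fixes $\rho$) induces the same permutation of the 12 mirrors as $h$; and since, as you note via the proof of lemma~\ref{LemTandRHOcorrespond}, that stabilizer acts faithfully on the 12 mirrors, $\chihat_0 h\chihat_0^{-1}=h$. No coordinate tracking of the point--line duality is needed. With that filled in, the rest of your proof is sound; your rigidity paragraph is a careful spelling-out of the step the paper leaves implicit (``this determines\dots the action on all of $\BDM$''), and is a welcome addition.
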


    \begin{proof}
    Since $\chihat$
    preserves $\rho$ and $\HDM$, it permutes the mirrors nearest~$\rho$,
        which are
    $s_i^\perp$.  Since it commutes with~$D_{24}$, it either
    preserves each of them, or else sends each $s_i^\perp$ to
    $s_{i+6}^\perp$.  In either case, this
    determines the action of $\chihat$ on
    the points of the $s_i^\perp$ that are nearest~$\rho$, and
    hence the action on all of~$\BDM$.  The first of these possibilities
    is realized by  $\chihat_0$, and   the second by
    $\chihat_0\circ Z$.
    \end{proof}

    We now return to our 
    path~$\beta(t)=(\beta_0(t),\dots,\beta_{11}(t))\in\Xst$.  
    The key ingredient in the
    analysis is that $\overline{\rho\rho'}$ is $\chihat_0$-invariant.
    Also important is that the $\beta_i(t)$ are all distinct, for each $t$
    except  $t=1$. Then, two of the $\beta_i(t)$ become equal,
    while the rest remain distinct from them and from each other.
    This follows from the fact that $\rho'$ lies in just one component
    of~$\HDM$.  One might skip the next proof for now, since 
    the lemma after it has a similar but simpler proof.

    \begin{lemma}
        \label{LemExcludeBadChihatCandidate}
        $\chihat=\chihat_0$.
    \end{lemma}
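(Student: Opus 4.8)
The plan is to exclude the possibility $\chihat=\chihat_0\circ Z$, which leaves only $\chihat=\chihat_0$. First some geometry. Recall from the proof of lemma~\ref{Lem2possibilitiesForChihat} that $\chihat_0$ fixes~$\rho$ and preserves each mirror $s_j^\perp$; since orthogonal projection onto a subspace commutes with any isometry fixing both the point and the subspace, and an isometry fixing two points of the ball fixes the geodesic between them, $\chihat_0$ fixes pointwise each geodesic $\overline{\rho\rho_j'}$, where $\rho_j'$ is the projection of~$\rho$ onto $s_j^\perp$ (in particular $\overline{\rho\rho'}=\overline{\rho\rho_0'}$ is $\chihat_0$-invariant, the ``key ingredient'' quoted before the lemma). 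The twelve points $\rho_j'=\rho-\tfrac{\ip{\rho}{s_j}}{s_j^2}s_j$ are pairwise distinct, because the coefficients $\ip{\rho}{s_j}$ are nonzero (lemma~\ref{LemMirrorsNearRho}: $\rho\notin s_j^\perp$) and the $s_j$ are pairwise non-proportional; hence the twelve geodesics $\overline{\rho\rho_j'}$ are pairwise distinct, and being geodesics issuing from the common point~$\rho$ they pairwise meet only at~$\rho$. Now $Z\in D_{24}\sset\PGDM$ fixes~$\rho$ and carries $s_0^\perp$ to $s_6^\perp$, hence carries $\overline{\rho\rho'}$ to $\overline{\rho\rho_6'}$; therefore $\chihat_0\circ Z$ sends $\overline{\rho\rho'}$ to $\chihat_0(\overline{\rho\rho_6'})=\overline{\rho\rho_6'}\neq\overline{\rho\rho'}$. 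Consequently $\chihat_0$ fixes every initial segment of $\overline{\rho\rho'}$ whereas $\chihat_0\circ Z$ fixes no initial segment of it, so it is enough to prove that $\chihat$ fixes some initial segment of $\overline{\rho\rho'}$.

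To do this I would work with the path~$\gamma$. Write $p$ for the composition of $\BDM-\HDM\to(\BDM-\HDM)/\PGDM$ with $f^{-1}$, and set $\bar\gamma:=p\circ(\overline{\rho\rho'}-\{\rho'\})$, a path in~$\M_{12}$ starting at~$T$; by construction $\gamma$ is the lift of $\bar\gamma$ to $X^\circ/\PGL_2\C$ starting at~$T$, and it coincides with $\ftilde^{-1}\circ\overline{\rho\rho'}$ near $t=0$. Since $\chihat$ lifts the action of~$\chi$ on $\M_{12}\iso(\BDM-\HDM)/\PGDM$ we have $p\circ\chihat=\chi\circ p$; and whichever of the two candidates $\chihat$ is, $\chihat\circ\overline{\rho\rho'}$ equals $g\cdot\overline{\rho\rho'}$ for a fixed $g\in\PGDM$ (namely $g=1$, or $g=Z$), so $\chi\circ\bar\gamma=p\circ\chihat\circ\overline{\rho\rho'}=\bar\gamma$ as parametrized paths. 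Because~$T$ is $\chi$-fixed and $X^\circ/\PGL_2\C\to\M_{12}$ is an honest covering (the $S_{12}$-action being free on tuples of distinct points), uniqueness of path lifting gives $\chi\circ\gamma=\gamma$ on $[0,1)$. On the other hand, the defining property of~$\chihat$ is that $\ftilde$ intertwines it with~$\chi$, i.e.\ $\chi\circ\ftilde^{-1}=\ftilde^{-1}\circ\chihat$ near~$\rho$, so for $t$ near~$0$ we get $\gamma=\chi\circ\gamma=\chi\circ\ftilde^{-1}\circ\overline{\rho\rho'}=\ftilde^{-1}\circ\chihat\circ\overline{\rho\rho'}$. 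Comparing this with $\gamma=\ftilde^{-1}\circ\overline{\rho\rho'}$ and using injectivity of~$\ftilde$ shows that $\chihat$ fixes an initial segment of $\overline{\rho\rho'}$, and the first paragraph then forces $\chihat=\chihat_0$.

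The main thing to get right here is the bookkeeping, not any single estimate: the distinction between $\chihat_0$ and $\chihat_0\circ Z$ is invisible in~$\M_{12}$ itself (both make $\bar\gamma$ invariant), so one must push everything down to the germ of $\BDM-\HDM$ at~$\rho$, be scrupulous that $\chihat$ intertwines via~$\ftilde$ with the \emph{particular} involution~$\chi$ on a neighborhood of~$\rho$ large enough to contain initial segments of both $\overline{\rho\rho'}$ and $Z\cdot\overline{\rho\rho'}$, and note that $\chi\circ\gamma=\gamma$ holds literally rather than merely up to a deck transformation, because its two sides agree near~$t=0$. A more hands-on variant, closer to the surrounding discussion, instead lifts~$\gamma$ to a path $\beta=(\beta_0,\dots,\beta_{11})$ of ordered $12$-tuples in~$\cpone$ and compares there: if $\chihat=\chihat_0\circ Z$ then $\chi\circ\beta$ would agree with~$\beta$ up to the index half-rotation $i\mapsto i+6$, forcing the unique pair of points that collide at the endpoint to be antipodal on the $12$-gon; but then $D_{24}$-equivariance would make all twelve components of~$\HDM$ nearest~$\rho$ correspond to antipodal collisions of~$T$, of which there are only six.
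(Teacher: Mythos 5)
Your first paragraph is a nice reduction: you correctly show that $\chihat_0$ fixes each geodesic $\overline{\rho\rho_j'}$ pointwise, that $\chihat_0\circ Z$ carries $\overline{\rho\rho'}$ to the different geodesic $\overline{\rho\rho_6'}$, and hence that it suffices to prove $\chihat$ fixes some initial segment of $\overline{\rho\rho'}$.

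The second paragraph, however, has a genuine gap at the unique-path-lifting step. Your parenthetical justification that $X^\circ/\PGL_2\C\to\M_{12}$ is an honest covering ``because the $S_{12}$-action is free on tuples of distinct points'' is wrong: $S_{12}$ acts freely on $X^\circ$, but \emph{not} on $X^\circ/\PGL_2\C$. Indeed the very point $T$ you are lifting from has $S_{12}$-stabilizer $D_{24}$ in $X^\circ/\PGL_2\C$ (the $D_{24}\subset\PGL_2\C$ that preserves $T$ as a set acts on the ordered tuple by a $D_{24}$-worth of permutations), so the map $X^\circ/\PGL_2\C\to\M_{12}$ is branched precisely at $T$. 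Unique path lifting therefore fails at the initial point: from $\chi\circ\bar\gamma=\bar\gamma$ and $\chi(T)=T$ you may only conclude $\chi\circ\gamma=d\circ\gamma$ on $(0,1)$ for some $d\in D_{24}$, and unwinding your own $\ftilde$-intertwining shows this $d$ is $1$ or $Z$ according to whether $\chihat=\chihat_0$ or $\chihat=\chihat_0\circ Z$. So the assertion $\chi\circ\gamma=\gamma$ is not an input to the proof; it is \emph{equivalent} to the conclusion $\chihat=\chihat_0$, and the argument is circular. (This is exactly why the paper, after establishing $\chihat=\chihat_0$ in this lemma, only then deduces $\chi$-invariance of $\gamma$ in lemma~\ref{LemBraidRepresentingSegment}.) The paper's route instead assumes $\chihat=\chihat_0\circ Z$, derives that $\gamma$ is $(\chi\circ Z)$-invariant, and shows this forces the limiting $12$-tuple $\beta(1)$ to be invariant under the fixed-point-free map $z\mapsto-1/\bar z$, contradicting the fact that it has a single collision point. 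Some new ingredient of this type, using the structure of $\beta$ away from $t=0$, seems unavoidable; the germ at $t=0$ alone does not distinguish the two candidate lifts.

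Your closing ``more hands-on variant'' is closer to the needed argument, but as written it is also incomplete: showing that the $(\chi\circ Z)$-invariance forces the colliding indices to be antipodal, and then that the twelve mirror directions would have to realize twelve \emph{distinct} antipodal collision types, requires the same kind of analysis of $\beta$ near $t=1$ that the paper carries out.
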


    \begin{proof}
        Suppose to the contrary, so $\chihat=\chihat_0\circ Z$.  
        Because $\overline{\rho\rho'}$ is $\chihat_0$-invariant,
$\gamma$ 
    is $(\chi\circ Z)$-invariant,
    where $Z$ is still the central involution of~$D_{24}$, but now
    acting on $\Xst/\PGL_2\C$.  That is,
    there exists $g_t:[0,1]\to\PGL_2\C$ such that 
    $(\chi\circ Z)(\beta(t))=g_t(\beta(t))$.  
    Since $Z$ acts on $X=(\cpone)^{12}$ by cyclically permuting
    the factors by six positions,
    we have
    \begin{equation}
        \label{EqAntiInvolution2}
    \bigl(\chi(\beta_6(t)),\dots,\chi(\beta_{11}(t)),\chi(\beta_0(t)),\dots,
        \chi(\beta_5(t)\bigr)=
        \bigl(g_t(\beta_0(t)),\dots,g_t(\beta_{11}(t))\bigr)
    \end{equation}
    Choose $i,j$ so that $\beta_i,\beta_j,\beta_{i+6},\beta_{j+6}$
    are not involved in the collision as $t\to1$.  From 
    \eqref{EqAntiInvolution2} this $4$-tuple is projectively
    equivalent to 
    $\chi(\beta_{i+6}),\chi(\beta_{j+6}),\chi(\beta_i),
    \discretionary{}{}{}\chi(\beta_j)$.  The latter is projectively
    equivalent to $\betabar_{i+6},\betabar_{j+6},\betabar_i,\betabar_j$,
    because $\chi$ and the usual complex conjugation on $\cpone$ differ
    by a projective transformation.  This gives an  equality of
    cross-ratios:
    $$
    \frac{(\beta_i-\beta_j)(\beta_{i+6}-\beta_{j+6})}{(\beta_i-\beta_{i+6})(\beta_j-\beta_{j+6})} =
    \frac{(\betabar_{i+6}-\betabar_{j+6})(\betabar_i-\betabar_j)}{(\betabar_{i+6}-\betabar_i)(\betabar_{j+6}-\betabar_j)}
    $$
    In particular, $\beta_i,\beta_j,\beta_{i+6},\beta_{j+6}$ have real
    cross-ratio. 

    A $4$-tuple has real cross ratio if and only if it can be carried into
        the unit circle by a projective transformation.  So,
        by applying a time-dependent projective transformation,
        we may suppose that $\beta_i(t),\beta_j(t),\beta_{i+6}(t),\beta_{j+6}(t)$
        lie on the unit circle, for all~$t$.   At $t=0$,
        $\{\beta_i,\beta_{i+6}\}$ separates $\beta_j$ and
        $\beta_{j+6}$ from each other on the circle.  
        These four points remain distinct during the
        motion, so the same holds
        for all~$t$.  Therefore the
        geodesics $\overline{\beta_i(t)\beta_{i+6}(t)}$ and
        $\overline{\beta_j(t)\beta_{j+6}(t)}$, in the Poincar\'e disk
        bounded by the unit circle, meet each other.  By applying
        a time-dependent projective transformation, preserving the unit circle,
        we may suppose without loss that the intersection point is the origin.
        In particular, $\beta_i(t)$ and $\beta_{i+6}(t)$ are antipodal
        for all~$t$, and similarly with $j$ in place of~$i$.

        We revisit \eqref{EqAntiInvolution2}
    in light of this information.  It says that $g_t$ acts by negation
    on four points of the unit circle.  This uniquely determines~$g_t$
    as $z\mapsto-z$, for all~$t$.  
        It follows that $\beta_{k+6}(t)=-1/\betabar_k(t)$ for all $k$.
        In particular, the unordered $12$-tuple underlying~$\beta(t)$
    is preserved by the fixed-point-free
    selfmap $z\mapsto-1/\bar z$ of $\cpone$.
    This is a contradiction: $\beta(1)$ has a single collision point, hence
    admits no such symmetry.
    \end{proof}

    \begin{lemma}
        \label{LemBraidRepresentingSegment}
    The path $\gamma$ in $\M_{12}$, corresponding to $\overline{\rho\rho'}$,
        is represented by a path 
        $(\beta_0(t),\dots,\beta_{11}(t))\in (\cpone)^{12}$, satisfying
        \begin{enumerate}
            \item
                $\beta_i(t)$ lies in the unit circle, for all $i$ and $t$;
            \item
                for each $t\in[0,1)$, the $\beta_i(t)$ are distinct;
            \item
                two of the $\beta_i(1)$ coincide and the rest are 
                distinct from them and each other.
        \end{enumerate}
    \end{lemma}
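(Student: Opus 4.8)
The plan is to read off all three properties from the identification $\chihat=\chihat_0$ of Lemma~\ref{LemExcludeBadChihatCandidate}, together with the discriminant property~\eqref{ItemDiscriminant} of the Deligne--Mostow isomorphism. First, $\chihat_0$ is a complex hyperbolic isometry fixing both endpoints $\rho$ and $\rho'$ of the geodesic segment $\overline{\rho\rho'}$, hence it fixes that segment pointwise. Running the transfer construction exactly as in the proof of Lemma~\ref{LemExcludeBadChihatCandidate}, but now in the (correct) case $\chihat=\chihat_0$, the path $\gamma$ in $X^\circ/\PGL_2\C$ is therefore fixed pointwise by the action of $\chi$ on $X^\circ/\PGL_2\C$. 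Equivalently, for each $t$ there is a transformation $g_t\in\PGL_2\C$ with $\chi(\beta_i(t))=g_t(\beta_i(t))$ for all $i$; since the $\beta_i(t)$ always include at least three distinct points, $g_t$ is unique, and choosing three indices not involved in the collision at $t=1$ shows $t\mapsto g_t$ is continuous on $[0,1]$.

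Next I would set $\psi_t:=g_t^{-1}\circ\chi$. This is an anti-holomorphic self-map of $\cpone$ fixing every $\beta_i(t)$, and $\psi_t^2$ is then a M\"obius transformation fixing at least three distinct points, hence the identity; so $\psi_t$ is an anti-holomorphic \emph{involution} with a fixed point. Every such involution of $\cpone$ is conjugate to complex conjugation, so its fixed locus is a circle $C_t$, and all twelve points $\beta_i(t)$ lie on $C_t$. Since $g_t$, hence $\psi_t$, hence $C_t$, varies continuously with $t$, there is a continuous family $h_t\in\PGL_2\C$ carrying $C_t$ to the unit circle; replacing $\beta(t)$ by $h_t(\beta(t))$ leaves $\gamma$ (which lives in $X^\circ/\PGL_2\C$) unchanged, and afterwards $\beta_i(t)$ lies on the unit circle for all $i$ and all $t$. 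This is assertion~(1).

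Assertion~(2) is immediate, since $\gamma(t)\in X^\circ/\PGL_2\C$ for $t\in[0,1)$, so the $\beta_i(t)$ are distinct there. For assertion~(3) I would observe that $f(\gamma(1))$ is the image of $\rho'$ in $\BDM/\PGDM$, and that by the quadrilateral case of Lemma~\ref{LemPolygonsMissMirrors}\eqref{ItemQuadrilateral} the point $\rho'$ lies in exactly one component of $\HDM$: it lies on $s_0^\perp$ by construction, it lies on the $40$ mirrors containing $\BDM$ for trivial reasons, and it lies on no other mirror of~$L$ because it is a vertex of the quadrilateral $Q$, which misses all other mirrors. Property~\eqref{ItemDiscriminant} then says that $\gamma(1)$ is represented by a $12$-tuple with a single point of multiplicity~$2$; that is, two of the $\beta_i(1)$ coincide and the remaining ten are distinct from those two and from each other.

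The delicate point is the very first step: making precise that $\chi$ fixes $\gamma$ pointwise, rather than merely setwise or up to the $S_{12}$ deck action of $X^\circ/\PGL_2\C\to\M_{12}$. This rests entirely on the bookkeeping relating $\chihat$, $\chihat_0$, $\ftilde$ and analytic continuation that was set up before Lemma~\ref{Lem2possibilitiesForChihat}; once that is in hand, everything else is elementary geometry of anti-holomorphic involutions of $\cpone$ together with one appeal to property~\eqref{ItemDiscriminant}.
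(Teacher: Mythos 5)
Your proof is correct, and it follows the paper's strategy in outline: use $\chihat=\chihat_0$ to deduce the pointwise $\chi$-invariance of $\gamma$, get the family $g_t$, put the points on the unit circle, and then obtain (2) and (3) from property~\eqref{ItemDiscriminant} together with the fact that $\rho'$ lies in exactly one component of $\HDM$.

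The one place where you diverge is the middle step. The paper first applies a time-dependent $\PGL_2\C$-transformation to push three collision-free indices $\beta_i,\beta_j,\beta_k$ onto the unit circle, observes these are then $\chi$-fixed hence $g_t$-fixed, concludes $g_t$ is the identity, and reads off from \eqref{EqAntiInvolution} that every $\beta_i(t)$ is $\chi$-fixed, i.e.\ on the unit circle. You instead introduce the anti-holomorphic map $\psi_t=g_t^{-1}\circ\chi$, show it is an involution with fixed points, so its fixed locus is a circle $C_t$ containing all twelve points, and then normalize $C_t$ to the unit circle by a continuous family $h_t$. Both are valid; the paper's argument is slightly shorter because it avoids discussing the circle $C_t$ and the triviality of the bundle $\PGL_2\C\to\PGL_2\C/\PGL_2\R$ over $[0,1]$, whereas your version makes the geometric content (all twelve points lie on the real locus of an anti-holomorphic involution) more explicit. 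Your appeal to Lemma~\ref{LemPolygonsMissMirrors}\eqref{ItemQuadrilateral} to justify that $\rho'$ lies in exactly one component of $\HDM$ is a reasonable way to fill in a fact the paper states without citation in the paragraph preceding the lemma.
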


    \begin{proof}
        By the previous two lemmas, $\chihat=\chihat_0$.  So
    the 
    $\chihat_0$-invariance of~$\overline{\rho\rho'}$ implies the
    $\chi$-invariance of $\gamma$.
    Ie, $\chi(\beta(t))$
    is projectively equivalent to $\beta(t)$ for all~$t$, which is to 
    say that there 
    exists $g_t:[0,1]\to\PGL_2\C$ such that
    \begin{equation}
        \label{EqAntiInvolution}
        \bigl(\chi(\beta_0(t)),\dots,\chi(\beta_{11}(t))\bigr)=
        \bigl(g_t(\beta_0(t)),\dots,g_t(\beta_{11}(t))\bigr).
    \end{equation}
    for all~$t$.
    We choose $i,j,k$ not involved in the collision; in particular, 
    $\beta_i(t),\beta_j(t),\discretionary{}{}{}\beta_k(t)$ remain
    distinct as $t\to1$.  
        Because $\PGL_2\C$ acts $3$-transitively on~$\cpone$,
        we may apply a time-dependent automorphism
    of $\cpone$, to suppose that 
    $\beta_i(t),\beta_j(t),\beta_k(t)$ 
    lie on the unit circle for all~$t$.
    That is, they are $\chi$-invariant, hence by \eqref{EqAntiInvolution}
    also $g_t$-invariant.  Only the trivial element of~$\PGL_2\C$
    fixes three points, so $g_t$ is the identity for all~$t$.
    Now \eqref{EqAntiInvolution} says that 
    $\beta_i(t)$ is fixed by~$\chi$, for all $i$ and~$t$.
    We have exhibited a representative
    $\beta:[0,1]\to\Xst$ for $\overline{\rho\rho'}$, consisting of
    $12$ points moving on the unit circle, that remain distinct until
    $t=1$, when two neighbors collide but no other collisions take place.
        Here we are using property \eqref{ItemDiscriminant} of the Deligne-Mostow
        isomorphism.
    \end{proof}

\begin{proof}[Proof of theorem~\ref{ThmDeligneMostow}]
    Our goal is to 
    understand the motions of $12$-tuples in~$\cpone$, corresponding
    to the meridians $\rho_0,\dots,\rho_{11}\in\piorb((\BDM-\HDM)/\PGDM,\rho)$.
    By definition, $\rho_0$ 
    follows $\overline{\rho\rho'}$ until very near $\rho'$, then 
    travels one third of the way around $s_0^\perp$, ending on 
    $S_0(\overline{\rho'\rho})$, and then continues along 
    $S_0(\overline{\rho'\rho})$ until arriving at $S_0(\rho)$.

    We already transferred $\overline{\rho\rho'}$ to a path~$\gamma$
    in $\Xst/\PGL_2\C$.  And $\gamma$ is  represented by a path $\beta$ 
    of ordered $12$-tuples in~$\cpone$, that remain
    distinct until $t=1$, when two neighbors collide,
    say 
    $\beta_0(t),\beta_1(t)$.
    By this choice of labeling,
    $\gamma(1)$ lies in the component $C_{01}$ of $(\Xst-X^\circ)/\PGL_2\C$
    fixed by the involution $A=(01)\in S_{12}$.  
    Because there is only one collision among $\beta_0(1),\dots,\beta_{11}(1)$,
    this is the only component of $(\Xst-X^\circ)/\PGL_2\C$ 
    containing $\gamma(1)$, just as $s_0^\perp$ is the only
    component of~$\H$ containing $\rho'$.
    Because $\Xst\to\Yst$
    has order~$2$ branching along~$C_{01}$, rather than order~$3$,
    we can now identify
    the path in~$X^\circ/\PGL_2\C$ corresponding to~$\rho_0$.  It
    begins at (the $\PGL_2\C$-orbit of) $T$,
    travels along $\gamma(t)$ until $t$ is very near~$1$ (say $t=t_0$), so
    that $\gamma(t_0)$ is very near~$C_{01}$.  Then it travels
    halfway around~$C_{01}$, ending at $A(\gamma(t_0))$,
    and then continues along $A(\reverse(\gamma))$ until arriving at
    $A(T)$.  

    This path is easy to lift to $X^\circ$, because we have
    already lifted $\gamma$ to $\beta$.
    It begins at~$T$ and 
    follows $\beta(t)$ until $t=t_0$, so that  $\beta_0,\beta_1$
    have almost
    collided.  Then $\beta_0,\beta_1$ move positively around each other,
    so that they swap places, ending at the $A(\beta(t_0))$.
    Then it follows $A(\reverse(\beta))$ all the way to
    $A(T)$.  This is one of the standard generators given in section~\ref{SecBraidGroups}
    for 
    the braid group $\Br_{12}(\cpone)$.   

    We have proven that
    the isomorphism $f:\M_{12}\to(\BDM-\HDM)/\PGDM$
    sends the standard generators
    for $\Br_{12}(\cpone)$, called 
    $\rho_i$ in section~\ref{SecBraidGroups}, to the meridians in
    $\piorb((\BDM-\HDM)/\PGDM,\rho)$ called $\rho_i$ in this section,
    up to labeling.  Considering which pairs of
    generators braid, and which pairs commute, shows that the labeling
    may be chosen to be $\rho_i\leftrightarrow\rho_i$.
\end{proof}

\section{\texorpdfstring{A neighborhood of $\BDM$, modulo its stabilizer}{A
neighborhood of the Deligne-Mostow ball, modulo its stabilizer}}
\label{SecNeighborhood}

\noindent
Recall from  section~\ref{SecChange}
that $\BDM$ means the $9$-ball orthogonal to the roots
$\sA,\dots,\sD$
that form our chosen
$A_4$ subdiagram of $P^2\F_3$.  
Our goal in this section is to define a suitable neighborhood~$U$
of $\BDM$, invariant under the setwise stabilizer $\PGDMsw$
of $\BDM$, and write down a presentation of
$\piorb\bigl((U-\H)/\PGDMsw\bigr)$.  
Theorem~\ref{ThmDeligneMostow} leads to
a surjection from this group 
to the orbifold
fundamental
group of the moduli space $\M_{12}$ of 
$12$-point subsets of $\C P^1$.  We will identify
the kernel as 
the ordinary $5$-strand braid group, and then 
work out the details of the
group extension.

We defined $\HDM\sset\BDM$ as the  union of the mirrors
of the roots of~$\LDM$.  In lemma~\ref{LemMirrorsMeetingBDM} we showed that these
are exactly the roots of~$L$ whose mirrors meet~$\BDM$,
except for those orthogonal to~$\BDM$.
This immediately implies:

\begin{lemma}
    \label{LemNeighborhoodU}
    There is a $\PGDMsw$-invariant neighborhood $U$ of~$\BDM$,
    such that orthogonal projection $\pi:\BB^{13}\to\BDM$ realizes
    $U-\H$ as a fibration over $\BDM-\HDM$, with fibers as follows.
    The fiber over 
    each $x\in\BDM-\HDM$ is an open 
    ball centered at~$x$, in the $\BB^4$ orthogonal
    to $\BDM$ at~$x$, minus  the $40$ mirrors of
    $L_4$.
    \qed
\end{lemma}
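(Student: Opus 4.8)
The plan is to turn the orthogonal projection $\pi\colon\BB^{13}\to\BDM$ into a fiber bundle and then obtain $U$ as a ``tube'' of variable radius around $\BDM$, with Lemma~\ref{LemMirrorsMeetingBDM} controlling which mirrors can intrude. Working in the model $\BB^{13}=\BB(L\tensor\C)$ with the orthogonal splitting $L\tensor\C=(\LDM\tensor\C)\oplus(L_4^{(1)}\tensor\C)$, the fiber of $\pi$ over a point $x\in\BDM$ is $\BB(W_x)$, where $W_x:=\C x+L_4^{(1)}\tensor\C$ has signature $(4,1)$; this is exactly the $\BB^4$ orthogonal to $\BDM$ at $x$, and $\pi$ realizes $\BB^{13}$ as a fiber bundle over $\BDM$, equivariant for $\PGDMsw$ since that group preserves $\BDM$ and complex hyperbolic isometries commute with nearest-point projection onto a preserved totally geodesic subspace.

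The first real step is to sort the mirrors of $L$, using Lemma~\ref{LemMirrorsMeetingBDM}, into three types and record how each meets the fibers. Type (A): the $40$ mirrors $s^\perp$ with $s$ a root of $L_4^{(1)}$; here $x\perp s$, so $s^\perp\cap W_x=\C x+(s^\perp\cap L_4^{(1)}\tensor\C)$, and $s^\perp$ meets every fiber $\BB(W_x)$ in the mirror of the root $s$ of $L_4$, a hyperplane through the center $x$ --- these are the ``$40$ mirrors of $L_4$'' of the statement. Type (B): the mirrors $s^\perp$ with $s$ a root of $\LDM$; then $s\perp L_4^{(1)}$, so if $x\in s^\perp$ one has $W_x\sset s^\perp$ and the whole fiber lies in the mirror, while if $x\notin s^\perp$ a one-line computation gives $s^\perp\cap W_x=L_4^{(1)}\tensor\C$, which is positive definite, so $s^\perp$ is \emph{disjoint} from the fiber $\BB(W_x)$. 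In particular, since $x\in s^\perp$ forces $x\in\HDM$, no type~(B) mirror meets any fiber sitting over a point of $\BDM-\HDM$; this is the observation that makes the lemma work, and it is really the heart of the matter. Type (C): the remaining mirrors, which by Lemma~\ref{LemMirrorsMeetingBDM} are disjoint from $\BDM$ and hence at positive distance from every point of it.

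With the type analysis in hand I would set $r(x):=\tfrac12\, d\bigl(x,\textstyle\bigcup\{\text{type (C) mirrors}\}\bigr)$ for $x\in\BDM$. This is continuous (distance to a closed set, using local finiteness of the mirror arrangement), strictly positive (no type~(C) mirror passes through a point of $\BDM$), and $\PGDMsw$-invariant (that group permutes the type~(C) mirrors). Then $U:=\{\,y\in\BB^{13}\colon d(y,\pi(y))<r(\pi(y))\,\}$ is open, is $\PGDMsw$-invariant, and contains $\BDM=\{\,y\colon d(y,\pi(y))=0\,\}$, so it is the desired neighborhood. For $x\in\BDM-\HDM$ the fiber $\pi^{-1}(x)\cap U$ is the open metric $r(x)$-ball about $x$ in the totally geodesic $\BB(W_x)$, and by the type analysis its intersection with $\H$ consists of exactly the $40$ type~(A) hyperplanes through its center: type~(B) mirrors miss the fiber altogether, and every type~(C) mirror lies at distance $\ge 2r(x)$ from $x$. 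Hence $\pi^{-1}(x)\cap(U-\H)$ is that ball minus the $40$ mirrors of $L_4$, as claimed. For $x\in\HDM$ the whole fiber lies in $\H$ (type~(B) with $x$ on the mirror), so $\pi(U-\H)=\BDM-\HDM$; and $\pi\colon U-\H\to\BDM-\HDM$ is a locally trivial fibration, being the restriction of the $\BB^4$-bundle $\pi$ over $\BDM-\HDM$, cut down to the $r(x)$-ball subbundle and with the $40$ codimension-one subbundles deleted --- those subbundles are pulled back from the \emph{fixed} root arrangement in $L_4^{(1)}\tensor\C$, so over a contractible chart the picture trivializes as $O\times\bigl(\BB^4\smallsetminus 40\text{ hyperplanes}\bigr)$.

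The one point that needs care is the simultaneous demand that $U$ be $\PGDMsw$-invariant, an honest open neighborhood of \emph{all} of $\BDM$, and still thin enough that its fibers over $\BDM-\HDM$ see only the $40$ distinguished mirrors. The tension evaporates once one notices that the mirrors of $\HDM$ are ``vertical'' for $\pi$: a type~(B) mirror $s^\perp$ projects under $\pi$ into the sub-ball $\BB(s^\perp\cap\LDM\tensor\C)\sset\HDM$, so it never enters a fiber lying over $\BDM-\HDM$, no matter how fat the tube is. Thus the radius of $U$ only has to be kept below the distance to the type~(C) mirrors, and for that local finiteness of the hyperplane arrangement is all that is needed. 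I do not expect the remaining fiber-bundle bookkeeping to present any difficulty.
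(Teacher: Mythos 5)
Correct, and essentially the paper's own approach: the paper treats this lemma as an immediate consequence of Lemma~\ref{LemMirrorsMeetingBDM}, offering no proof beyond the preceding phrase ``This immediately implies.'' Your argument supplies exactly the implicit details --- the three-way sorting of mirrors (those containing $\BDM$, those meeting it in a proper subball, and those missing it entirely), the $\PGDMsw$-invariant variable-radius tube controlled by the distance to the third type, and the fiber-bundle bookkeeping.
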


Now we choose a basepoint
$\sigma\in\overline{\tau\rho}-\{\rho\}$,
close enough to~$\rho$ so that the meridians $\sigma_j$
defined in section~\ref{SecChange} lie in~$U$, for $j=0,\dots,11,\rmA,\dots,\rmD$.
By moving $\sigma$ closer to~$\rho$ we may 
also suppose 
that $\sigma$'s 
$\PG$-stabilizer is a subgroup of~$\rho$'s.
We set
$$
J:=\piorb\bigl((U-\H)/\PGDMsw,\sigma\bigr)
$$
Most of this section concerns relations between 
$\sigma_0,\dots,\sigma_{11},\sigmaA,\dots,\sigmaD$.
Because the roots $s_0,\dots,s_{11},
\sA,\dots,\sD$
form 
an $\Atilde_{11}A_4$ subdiagram of $P^2\F_3$, the corresponding
meridians $\tau_0,\dots,\tau_{11},
\tauA,\dots,\tauD$
(all based at~$\tau$)
satisfy the 
$\Atilde_{11}A_4$ Artin relations.  
This is theorem~4.4 of \cite{Basak-bimonster-2}.
By  theorem~\ref{ThmBasepointChange},
$\sigma_0,\dots,\sigma_{11},
\sigmaA,\dots,\sigmaD$
do too.

Orthogonal projection $\BB^{13}\to\BDM$ is $\PGDMsw$-equivariant,
inducing a map 
\begin{equation}
    \label{EqProjectionToSubballQuotient}
    (U-\H)/\PGDMsw\to(\BDM-\HDM)/\PGDM
\end{equation}
Here $\PGDM$ is the group of automorphisms of $\BDM$ induced
by $\PGDMsw$.  That is, $\PGDM$ is the quotient of $\PGDMsw$ by
the pointwise stabilizer of $\BDM$.  This agrees with our previous
use of $\PGDM$ to mean the projective isometry group of~$\LDM$, because
$\LDM$ is a summand of~$L$.  The projection \eqref{EqProjectionToSubballQuotient}
induces a homomorphism
\begin{equation}
    \label{EqProjection}
    J
    \to\piorb\bigl((\BDM-\HDM)/\PGDM,\rho\bigr)
    \iso
    \piorb(\M_{12},T)
\end{equation}
The isomorphism here uses our refinement (theorem~\ref{ThmDeligneMostow}) of the theorem of
Deligne and Mostow.
It is well-known that $\M_{n}$ has contractible orbifold
universal  cover \cite{Birman}.  Therefore, applying the long exact homotopy
sequence to
the fibration in 
lemma~\ref{LemNeighborhoodU} yields an exact sequence
\begin{equation}
    \label{EqExactSequenceBefore}
    1\to
    \piorb(\hbox{fiber over~$\rho$},\sigma)
    \to J
    \to\piorb(\M_{12},T)
    \to 1.
\end{equation}
Next we
work out the left term.

\begin{lemma}
    \label{LemKernel}
    The kernel of \eqref{EqExactSequenceBefore} is isomorphic to the
    $5$-strand braid group $\Br_5$.
    The meridians 
    $\sigmaA,\dots,\sigmaD$
    form a standard
    set of generators.  That is, their $A_4$ Artin relations are 
    defining relations.
\end{lemma}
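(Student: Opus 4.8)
The plan is to make the fiber of \eqref{EqExactSequenceBefore} over~$\rho$ explicit as a discriminant complement of the complex reflection group attached to the Eisenstein $E_8$ lattice, and then to quote the known presentation of the fundamental group of that complement. First I would unwind lemma~\ref{LemNeighborhoodU}: the fiber over~$\rho$ is a small ball around~$\rho$ inside the $4$-ball $\pi^{-1}(\rho)$ orthogonal to~$\BDM$ at~$\rho$, with the $40$ mirrors of $L_4^{(1)}$ deleted; since every root of $L_4^{(1)}$ is orthogonal to~$\BDM$, all $40$ of these mirrors pass through~$\rho$. Passing to geodesic normal coordinates at~$\rho$ identifies this punctured ball, as a space-with-arrangement, with a small ball around~$0$ in $T_\rho\pi^{-1}(\rho)\iso L_4^{(1)}\tensor\C\iso\C^4$, minus the central arrangement~$\mathcal{A}$ of the $40$ reflecting hyperplanes of the complex reflection group $W:=W(L_4^{(1)})$ generated by the $\w$-reflections in the roots of $L_4^{(1)}$; this $W$ is the group $G_{32}$ in the Shephard--Todd classification. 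Since $\mathcal{A}$ is central, its conical structure gives a $W$-equivariant homotopy equivalence between the fiber and $(L_4^{(1)}\tensor\C)-\mathcal{A}$. Finally, the kernel of \eqref{EqExactSequenceBefore} is $\piorb$ of this fiber modulo the pointwise stabilizer~$K$ of~$\BDM$ in~$\PGDMsw$.

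Next I would identify~$K$ and compute the relevant fundamental group. An isometry of~$L$ fixing~$\BDM$ pointwise acts on $\LDM\tensor\C$ as a sixth root of unity and preserves the summand $L_4^{(1)}$; rescaling by that root of unity shows $K\iso\Aut_\E(L_4^{(1)})$, which equals the reflection group $W=G_{32}$ itself (the Eisenstein $E_8$ lattice admits no $\E$-linear isometries outside $G_{32}$), acting on $(L_4^{(1)}\tensor\C)-\mathcal{A}$ in the standard way. By Steinberg's theorem $G_{32}$ acts freely on $(L_4^{(1)}\tensor\C)-\mathcal{A}$, so the kernel of \eqref{EqExactSequenceBefore} is $\pi_1\bigl(((L_4^{(1)}\tensor\C)-\mathcal{A})/G_{32}\bigr)$, which is the braid group of $G_{32}$. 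Now $G_{32}$ is the Shephard group whose Coxeter-like diagram is $A_4$ with every node of reflection-order~$3$, and the $\w$-reflections $\SA,\dots,\SD$ in $\sA,\dots,\sD$ form a generating set of reflections realizing that diagram (adjacent roots have inner product~$\theta$, so their $\w$-reflections braid, and non-adjacent ones are orthogonal, so theirs commute); by \cite{Bessis}, together with the earlier computations of Orlik and Solomon for discriminants of Shephard groups, the braid group of $G_{32}$ is the Artin group $\Art(A_4)\iso\Br_5$, with standard Artin generators the meridians about the mirrors $\sA^\perp,\dots,\sD^\perp$.

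It then remains to recognize $\sigmaA,\dots,\sigmaD$ as exactly those meridians. Each of $\SA,\dots,\SD$ fixes~$\BDM$ pointwise, hence lies in~$K$, and each path $\mu_{\sigma,s}$ stays inside the totally geodesic fiber $\pi^{-1}(\rho)$ --- the projection of~$\sigma$ to $s^\perp$ already lies in $\pi^{-1}(\rho)$ because the relevant root is orthogonal to~$\BDM$, and $\pi^{-1}(\rho)$ is closed under passing to complex geodesics through its points and under~$S$ --- so $\sigmaA,\dots,\sigmaD$ lie in the kernel of \eqref{EqExactSequenceBefore} and are precisely the meridians about $\sA^\perp,\dots,\sD^\perp$ based at~$\sigma$. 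By theorem~\ref{ThmBasepointChange} together with theorem~4.4 of \cite{Basak-bimonster-2} they satisfy the $A_4$ Artin relations. Combining this with the previous paragraph, the kernel is $\Br_5$ with $\sigmaA,\dots,\sigmaD$ a standard set of Artin generators and the $A_4$ relations a set of defining relations. The step I expect to be most delicate is the identification of~$K$: one must verify that the pointwise stabilizer of~$\BDM$ is exactly $G_{32}$ rather than some larger group of Eisenstein isometries of $L_4$, match its action on the fiber with the reflection action of $G_{32}$ closely enough to see that $\sigmaA,\dots,\sigmaD$ are carried to the \emph{standard} braid generators, and reconcile the orientation conventions (an $\w$-reflection meridian versus a positively oriented Artin generator).
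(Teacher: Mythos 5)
Your overall strategy is the same as the paper's: identify the kernel with the braid group of the finite complex reflection group $\Aut(L_4)\iso G_{32}$ and quote Orlik--Solomon to get $\Br_5$. The paper's proof is essentially your first two paragraphs, with $\Aut(L_4)$ playing the role of your pointwise stabilizer~$K$.

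Where you diverge --- and where the gap lies --- is in identifying $\sigmaA,\dots,\sigmaD$ as a \emph{standard} generating set. You assert that ``the standard Artin generators [are] the meridians about the mirrors $\sA^\perp,\dots,\sD^\perp$,'' and then say it ``remains to recognize $\sigmaA,\dots,\sigmaD$ as exactly those meridians.'' But ``the meridians about those mirrors'' is not a well-defined four-element set: for a fixed basepoint there are infinitely many meridians encircling each mirror, all conjugate, and an arbitrary choice of one per mirror need not even generate. Orlik--Solomon and Bessis produce \emph{some} generating meridians, attached to \emph{their} choices of basepoint and auxiliary data; you never match those choices to the basepoint $\sigma\in\overline{\rho\tau}$ and the geodesic-plus-arc paths $\mu_{\sigma,\sA},\dots,\mu_{\sigma,\sD}$ used to define $\sigmaA,\dots,\sigmaD$. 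This is exactly the point the paper spends effort on: it invokes Theorem~1.2 of \cite{AB-braidlike}, which gives a checkable sufficient condition (in terms of which mirrors lie closest to $\sigma$, and where $\sigma$ projects relative to its reflection-images) for a tuple of meridians to generate the braid group; verifying the hypotheses requires explicit computation. Once generation is known, the paper combines the $A_4$ Artin relations with the Hopfian property of $\Br_5$ to upgrade ``generating set satisfying the Artin relations'' to ``standard generating set.'' You flag this as the ``most delicate'' step but do not carry it out, and no amount of reconciling orientation conventions will substitute for a generation argument. If you want to avoid the computer check, the paper's Remark following the lemma sketches a geometric alternative, but it too assumes generation has already been established.
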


\begin{proof}
    We recall that $\Aut(L_4)$ 
    is the finite complex reflection
    group numbered $32$ in the Shephard-Todd list \cite[Table VII]{ShephardTodd}.  
    The fibration in lemma~\ref{LemNeighborhoodU}
    shows that the kernel of \eqref{EqExactSequenceBefore} is
    \begin{equation}
        \label{EqE8EBraidSpace}
        \piorb\bigl((B-\H)/\!\Aut(L_4),\,\sigma\bigr)
    \end{equation}
    where $B$ is 
    a small ball centered at~$\rho$, 
    in the orthogonal complement to $\BDM$ at~$\rho$.
    This is essentially the definition
    of 
    the braid group associated to $\Aut(L_4)$.
    (The standard definition \cite{Bessis} uses $\C^4$ in place of~$B$,
    and $\pi_1$ in place of $\piorb$, which is no change at all
    because finite complex reflection groups act freely on their
    mirror complements.)
    Orlik and Solomon showed that this
    is isomorphic to the standard $5$-strand braid group \cite[Theorem 2.25]{OS}. 

    It remains to show that 
    $\sigmaA,\dots,\sigmaD$ are a standard
    set of generators.  We use \cite[Thm.~1.2]{AB-braidlike} to prove
    generation.  This requires us to check several things.  First,
    the reflections in 
    $\sA,\dots,\sD$ generate $\Aut(L_4)$,
    which is well-known.  Second: of the $40$ mirrors
    of $\Aut(L_4)$, the ones closest to $\sigma$ are the mirrors 
    orthogonal to these four roots.  This is an easy computer check.
    To state the third condition,
    we write $E$  for the set of eight images of $\sigma$ under the $\w$-
    and $\wbar$-reflections in these four mirrors.  One must check,
    for each of the remaining $36$ mirrors, that the projection of $\sigma$ 
    to that mirror is closer to some element of~$E$ than it is to~$\sigma$.
    This is another easy computer check.

    The $A_4$ Artin relations satisfied by 
    $\sigmaA,\dots,\sigmaD$ 
    are the
    relations defining $\Br_5$ in terms of its standard generators.  
    So we obtain a self-surjection $\Br_5\to\Br_5$ by sending
    some standard set of generators to 
    $\sigmaA,\dots,\sigmaD$.
    The braid group is Hopfian \cite[p.\ 276]{Bell-Margalit}, which means that 
    any self-surjection must be an automorphism.  Therefore 
    $\sigmaA,\dots,\sigmaD$
    are themselves a standard set of generators.
\end{proof}

\begin{remark}
    We sketch a geometric argument that avoids the Hopfian trickery
    and the reliance on \cite{OS}.  First one proves
    generation, as above, and it remains to show that the braid
    relations on
    $\sigmaA,\dots,\sigmaD$
    define the subgroup of~$J$ that 
    they generate.
    Suppose $\EuScript{R}(\sigmaA,\dots,\sigmaD)=1$ is a relation they satisfy.
    By theorem~\ref{ThmBasepointChange} we also have
    $\EuScript{R}(\tauA,\dots,\tauD)=1$.  Then by $L_3(3)\semidirect2$ symmetry it follows
    that $\EuScript{R}(\tau_0,\dots,\tau_3)=1$.  Another application of
    theorem~\ref{ThmBasepointChange} shows $\EuScript{R}(\sigma_0,\dots,\sigma_3)=1$,
    and then projecting to $(\BDM-\HDM)/\PGDM$ gives $\EuScript{R}(\rho_0,\dots,\rho_3)=1$.
    Using normal forms
    for words in braid groups, one can
    show: the homomorphism 
    $\Br_5(\C)\to\Br_{12}(\cpone)\to\piorb(\M_{12})$,
    sending the standard generators of $\Br_5$ to 
    four consecutive standard generators of $\Br_{12}(\cpone)$,
    is injective.  It follows that $\EuScript{R}(\rho_0,\dots,\rho_3)=1$
    is a consequence of the braid relations on $\rho_0,\dots,\rho_3$.
    Reversing the first part of the argument shows that
    the same holds with $\sigmaA,\dots,\sigmaD$ in place of
    $\rho_0,\dots,\rho_3$.
\end{remark}

We have established the exact
sequence
\begin{equation}
    \label{EqExactSequence}
1\to\Br_5
\to
J
\to
\piorb(\M_{12})\to 1
\end{equation}
We know that $J$ is generated by 
$\sigma_0,\dots,\sigma_{11},\sigmaA,\dots,\sigmaD$. 
Three kinds of relations suffice to define~$J$.  First are the
relations defining the normal subgroup $\Br_5$, which are words
in $\sigmaA,\dots,\sigmaD$.  Then there are relations saying
how $\sigma_0,\dots,\sigma_{11}$ act on this normal subgroup
(they centralize it).  Finally there are relations of the form
\begin{equation}
    \label{EqWords}
\hbox{word}(\sigma_0,\dots,\sigma_{11})=
    \hbox{word}(\sigmaA,\dots,\sigmaD)
\end{equation}
where the left side is one of the relators defining
$\piorb(\M_{12})$, except written with $\sigma_j$'s
in place of $\rho_j$'s.  And the right side is a word in
$\sigmaA,\dots,\sigmaD$ that is equal to the left side word.
(Such a word exists,  because the left side lies in the
kernel of the projection to $\piorb(\M_{12})$).  It is easy to
see that any word in $\sigma_0,\dots,\sigma_{11},\sigmaA,\dots,\sigmaD$,
that is trivial in~$J$, can be reduced to the trivial word
by use of these relators.  So a set of defining relations
for~$J$ consists of 
the Artin relators involving any of $\sigmaA,\dots,\sigmaD$
(including their commutativity with $\sigma_0,\dots,\sigma_{11}$),
plus one relation of the form \eqref{EqWords} for each defining
relator of $\piorb(\M_{12})$.

It remains to work out the words \eqref{EqWords} explicitly.
As preparation, we record
some facts about important elements of $\PG$.
We write 
$S_0,\dots,S_{11},\SA,\dots,\SD$
for the $\omega$-reflections
in the roots
$s_0,\dots,s_{11},\sA,\dots,\sD$.

\begin{lemma}
    \label{LemSpecialWordsInSs}
    \leavevmode
    \begin{enumerate}
        \item
            \label{ItemIncreasingDecreasingSs}
            $S_jS_{j+1}\cdots S_{j+10}$ and $S_jS_{j-1}\cdots S_{j-10}$
            are independent of~$j\in\{0,\dots,11\}$. 
            Both of them, and 
            $\Delta(\SA,\dots,\SD)$, act on $\BB^1(\rho,\tau)$ 
            by the positive $\pi/6$ rotation around~$\rho$.
        \item
            \label{ItemScalarsOnLDM}
            The actions of 
            $S_1S_2\cdots S_{10}S_{11}^2 S_{10}\cdots S_2S_1$
            and 
            $\Delta(\SA,\dots,\SD)^2$ 
            on $\BB^{13}$ coincide.  Namely, both
            $e^{\pi i/3}S_1\cdots S_{10}S_{11}^2S_{10}\cdots S_1$
            and $\Delta(\SA,\dots,\SD)^2$ act trivially on $\LDM$ and
            by the scalar $e^{\pi i/3}$ on its orthogonal complement.
        \item
            \label{ItemDeltaOfSs}
            $\Delta(S_1,\dots,S_{11})$ acts on
            $\BB^1(\rho,\tau)$
            by the $\pi$ rotation around~$\rho$.
    \end{enumerate}
\end{lemma}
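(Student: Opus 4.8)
The plan is to identify each displayed word with a concrete isometry of $\BB^{13}$, by combining three ingredients: braid relations (to kill the apparent dependence on $j$ and to record how the relevant elements conjugate the $S$'s), Schur's lemma (to recognize certain products as \emph{scalars} on $\LDM\otimes\C$ or on $L_4^{(1)}\otimes\C$), and one explicit eigenvalue computation in the $P^2\F_3$-coordinates for $L$ (to pin down the scalar, hence the rotation angle). Two structural facts are used throughout: $\SA,\dots,\SD$ fix $\LDM\otimes\C$ pointwise while $S_0,\dots,S_{11}$ fix $L_4^{(1)}\otimes\C$ pointwise; and $\LDM\otimes\C=\gend{s_0,\dots,s_{11}}$, $L_4^{(1)}\otimes\C=\gend{\sA,\dots,\sD}$ carry \emph{irreducible} reflection representations of $\GDM\supseteq\gend{S_0,\dots,S_{11}}$ and of $\Aut(L_4)=\gend{\SA,\dots,\SD}$ (the $\Atilde_{11}$ and $A_4$ diagrams are connected and the forms are nondegenerate). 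Note also $\tau-\rho\in L_4^{(1)}\otimes\C$, so $\BB^1(\rho,\tau)=\BB^1(\rho,\tau-\rho)$; since $\tau$ is $D_{24}$-fixed, so are $\rho$ and $\tau-\rho$ as vectors.

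For the independence of $j$ in \eqref{ItemIncreasingDecreasingSs}: under the monodromy homomorphism $\piorb\bigl((\BDM-\HDM)/\PGDM,\rho\bigr)\to\PGDM$ the meridian $M_{\rho,s_j}$ maps to $S_j$, so via Theorem~\ref{ThmDeligneMostow} the product $S_jS_{j+1}\cdots S_{j+10}$ is the image of the braid $\rho_j\rho_{j+1}\cdots\rho_{j+10}\in\piorb(\M_{12})$, an $(n{-}1)$-fold increasing product in the sense of Theorem~\ref{ThmBraidGroups}. The relations of Theorem~\ref{ThmBraidGroups}\eqref{ItemAdjoinInftyRelations} (all such products coincide, and the common value $I$ satisfies $I\rho_kI^{-1}=\rho_{k+1}$ for all $k$) and \eqref{ItemAdjoinZeroRelations} (the decreasing analogue) both hold in $\Br_{12}(\cpone)$ by \eqref{ItemAdjoinBothRelations}, hence in its quotient $\piorb(\M_{12})$, hence in $\PGDM$. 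Thus $S_jS_{j+1}\cdots S_{j+10}$ and $S_jS_{j-1}\cdots S_{j-10}$ are independent of $j$; write $\bar I,\bar D$ for the common values, and record $\bar IS_k\bar I^{-1}=S_{k+1}$, $\bar DS_k\bar D^{-1}=S_{k-1}$ for all $k$. (One could instead verify $S_0\cdots S_{10}=S_1\cdots S_{11}$ directly in $L$.)

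Now part \eqref{ItemScalarsOnLDM}. Regroup $S_1S_2\cdots S_{10}S_{11}^2S_{10}\cdots S_2S_1=(S_1\cdots S_{11})(S_{11}\cdots S_1)=\bar I\bar D$. Since $\bar I\bar D$ conjugates every $S_k$ to itself, by irreducibility it acts on $\LDM\otimes\C$ by a scalar, while fixing $L_4^{(1)}\otimes\C$ pointwise (this is why its image in $\PGDM$ is trivial). Likewise $\Delta(\SA,\dots,\SD)$ conjugates $\SA\leftrightarrow\SD$, $\SB\leftrightarrow\SC$ (the fundamental element reverses the $A_4$-chain), so $\Delta(\SA,\dots,\SD)^2$ centralizes $\SA,\dots,\SD$, hence acts on $L_4^{(1)}\otimes\C$ by a scalar and trivially on $\LDM$. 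Each scalar is then determined by a single evaluation; a determinant count (a product of $m$ triflections has determinant $\w^m$ on the relevant invariant subspace, and $\Aut(L_4)$ has cyclic centre of order $6$) narrows the possibilities and, with the orientation, gives $e^{\pi i/3}$ for $\Delta(\SA,\dots,\SD)^2$ and $e^{-\pi i/3}$ for $\bar I\bar D$ — which is exactly \eqref{ItemScalarsOnLDM}. The rotation statements follow: on $\BB^1(\rho,\tau-\rho)$ the element $\Delta(\SA,\dots,\SD)^2$ fixes $\rho$ and scales $\tau-\rho$ by $e^{\pi i/3}$, and $\bar I\bar D$ scales $\rho$ by $e^{-\pi i/3}$ and fixes $\tau-\rho$, both rotating by $+\pi/3$; and since $\tau-\rho$ (being fixed by the reversal, which lies in $D_{24}$) is an eigenvector of $\Delta(\SA,\dots,\SD)$ with eigenvalue a square root of $e^{\pi i/3}$, the element $\Delta(\SA,\dots,\SD)$ rotates $\BB^1(\rho,\tau)$ by $+\pi/6$.

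Finally the bare products in \eqref{ItemIncreasingDecreasingSs} and \eqref{ItemDeltaOfSs}. Because $\bar I$ conjugates $S_k\mapsto S_{k+1}$, it induces on $\{s_0^\perp,\dots,s_{11}^\perp\}$ the same $12$-cycle as the order-$12$ rotation $r\in D_{24}$; hence $r^{-1}\bar I$ fixes each $s_k^\perp$, so (the $12$-gon is connected) acts on $\LDM\otimes\C$ by a scalar, and $\bar I$ acts on $\BDM$ exactly as $r$ does — in particular fixing $\rho$. As $\bar I$ fixes $L_4^{(1)}\otimes\C$ pointwise it therefore acts on $\BB^1(\rho,\tau)$ as a holomorphic rotation about $\rho$, whose angle is read off from its scalar action on $\C\rho$; the value $+\pi/6$ is forced by $\bar I\bar D$ rotating by $+\pi/3$ and $\bar D$ being handled identically with $r^{-1}$ in place of $r$, and is confirmed by one evaluation. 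The same scheme treats $\Delta(S_1,\dots,S_{11})$: it conjugates $S_j\mapsto S_{12-j}$ for $j=1,\dots,11$, so up to a scalar (from irreducibility) it agrees on $\LDM\otimes\C$ with the $12$-gon reflection fixing $s_0^\perp$ and $s_6^\perp$, which lies in $D_{24}$; thus it fixes $\rho$, fixes $L_4^{(1)}\otimes\C$ pointwise, and rotates $\BB^1(\rho,\tau)$ about $\rho$ — by $\pi$, since its square acts trivially on $\BB^1(\rho,\tau)$ (the $11$-strand half-twist $\bar I^{12}$ is a scalar on $\LDM\otimes\C$). The main obstacle is the last step in each case: verifying the precise conjugation action of $\bar I,\bar D,\Delta(S_1,\dots,S_{11})$ on the whole set $S_0,\dots,S_{11}$ — the effect of $\Delta(S_1,\dots,S_{11})$ on $S_0$ in particular needs a short argument or a direct check — keeping the $(n{-}1)$- versus $n$-factor conventions of Theorems~\ref{ThmBraidGroups} and~\ref{ThmPi1ModuliSpace} straight, and carrying out the one numerical eigenvalue computation (in $\Q(\theta,\sqrt3)$) that nails each scalar and its orientation, with the determinant count as a cross-check; I expect this computational pinning-down to be the real friction, the rest being organization.
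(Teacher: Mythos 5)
The paper's own proof of this lemma is a single word: ``Computation.''  Your argument takes a genuinely different, more structural route, and it is worth comparing.

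You replace a direct $14\times14$ matrix computation with a combination of (a) the braid relations from Theorems~\ref{ThmBraidGroups} and \ref{ThmPi1ModuliSpace}, pushed through the Deligne--Mostow identification (Theorem~\ref{ThmDeligneMostow}) into $\PGDM$, to obtain the $j$-independence and the conjugation formulas; (b) Schur's lemma on the irreducible reflection representations of $\gend{S_0,\dots,S_{11}}$ on $\LDM\otimes\C$ and of $\Aut(L_4)$ on $L_4^{(1)}\otimes\C$, to conclude that the relevant words act as scalars on one summand and trivially on the other; and (c) determinant and center-of-$\Aut(L_4)$ counts, plus one explicit eigenvalue evaluation, to pin down the scalars and hence the rotation angles. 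This buys conceptual clarity --- it explains \emph{why} these elements must be rotations of $\BB^1(\rho,\tau)$ about $\rho$, rather than just observing it --- at the cost of introducing a logical dependence on Theorem~\ref{ThmDeligneMostow} (acceptable, since it is proved before this lemma) and of not quite eliminating computation. Note also that you establish $j$-independence only projectively; since the two products of eleven triflections both fix $L_4^{(1)}\otimes\C$ pointwise, any scalar by which they could differ is forced to be $1$, so the lift to actual isometries is automatic, but this step should be said.

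The one place where you over-claim is the word ``forced.'' Your Schur/determinant/center reasoning pins $\Delta(\SA,\dots,\SD)^2$ on $L_4^{(1)}\otimes\C$ to $e^{\pm\pi i/3}$ and then to $e^{\pi i/3}$ (determinant rules out the other root of unity in $Z(\Aut(L_4))$); but for $\bar I\bar D$ the constraint $\lambda^{10}=\omega$ together with $\lambda$ being an Eisenstein unit still leaves two candidates, and nothing short of an evaluation chooses between them. Likewise, for the bare products: the conjugacy of $\bar I$ and $\bar D$ by a holomorphic element of $D_{24}$ preserving $\BB^1(\rho,\tau)$ shows their rotation angles on $\BB^1(\rho,\tau)$ are equal, and together with $\bar I\bar D$ rotating by $\pi/3$ this gives $\theta\in\{\pi/6,\,\pi/6+\pi\}$, not $\theta=\pi/6$ outright; similarly the eigenvalue of $\Delta(\SA,\dots,\SD)$ on $\tau-\rho$ is only determined up to a sign. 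You do flag this (``confirmed by one evaluation''), but the prose reads as if the structural argument alone nails the orientation, which it does not. So the honest statement is: your method reduces the lemma to a single numerical eigenvalue/orientation check in $\Q(\theta,\sqrt3)$, rather than avoiding computation --- a real improvement in transparency, but still a computation, which is essentially what the paper's terse proof appeals to.
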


\begin{proof}
    Computation.
\end{proof}

\begin{lemma} 
    \label{LemStabilizerOfSigma}
    The $\PG$-stabilizer of $\sigma$ fixes
    $\BB^1(\rho,\tau)$ pointwise, has generators
    \begin{center}
        \begin{tabular}{cl}
            $S_1 S_2\cdots S_{11}\cdot\Delta(\SA,\dots,\SD)^{-1}$%
            &of order~$12$ and
            \\
            $(S_1 S_2\cdots S_{11})^6\cdot\Delta(S_1,\dots,S_{11})^{-1}$%
            &of order~$2$,
        \end{tabular}
    \end{center}
    and is dihedral of order~$24$.
    Conjugation by the second word inverts the first.
\end{lemma}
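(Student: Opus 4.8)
The plan is to first identify the $\PG$-stabilizer of $\sigma$ abstractly as a copy of $D_{24}$, and then locate the two displayed words inside it. First I would exploit the hypothesis---built into the choice of $\sigma$ in this section---that $\op{Stab}_{\PG}(\sigma)\subseteq\op{Stab}_{\PG}(\rho)$: any element of $\op{Stab}_{\PG}(\sigma)$ then fixes both $\rho$ and $\sigma$, hence fixes the geodesic $\overline{\rho\sigma}$ pointwise, and, being a holomorphic isometry, it restricts to an automorphism of the disk $\BB^1(\rho,\tau)=\BB^1(\rho,\sigma)$ with two fixed points, so it acts as the identity on $\BB^1(\rho,\tau)$. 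This proves the first assertion of the lemma; in particular every such element fixes $\tau$, so $\op{Stab}_{\PG}(\sigma)\subseteq\op{Stab}_{\PG}(\tau)=L_3(3)\semidirect2$. Conversely, any element of $L_3(3)\semidirect2$ fixing $\rho$ fixes the whole geodesic $\overline{\rho\tau}$ and hence fixes $\sigma$, so in fact $\op{Stab}_{\PG}(\sigma)=\op{Stab}_{L_3(3)\semidirect2}(\rho)$.

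Next I would pin this group down. The containment $D_{24}\subseteq\op{Stab}_{\PG}(\sigma)$ is immediate, since $D_{24}$ preserves $\BDM$ and fixes $\tau$, hence fixes its orthogonal projection $\rho$ to $\BDM$, hence fixes $\sigma\in\overline{\rho\tau}$. For the reverse containment, let $g\in L_3(3)\semidirect2$ fix $\rho$. Then $g$ permutes the components of $\H$ and preserves their distances from $\rho$, so by Lemma~\ref{LemMirrorsNearRho} it permutes $\{s_0^\perp,\dots,s_{11}^\perp\}$, the components of $\H$ closest to $\rho$ among those not through it. Since $g$ is an integral isometry and the $s_j$ are roots, $g$ sends each $s_j$ to an $\E^\times$-multiple of some $s_k$, so it preserves the $12$-gon; by the defining property of $D_{24}$ as the $(L_3(3)\semidirect2)$-stabilizer of this $12$-gon, $g\in D_{24}$. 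Hence $\op{Stab}_{\PG}(\sigma)=D_{24}$, dihedral of order $24$, fixing $\BB^1(\rho,\tau)$ pointwise.

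It then remains to recognize the two words. By Lemma~\ref{LemSpecialWordsInSs}(1), both $S_1S_2\cdots S_{11}$ and $\Delta(\SA,\dots,\SD)$ act on $\BB^1(\rho,\tau)$ as the positive $\pi/6$ rotation about $\rho$, so $w_1:=S_1\cdots S_{11}\cdot\Delta(\SA,\dots,\SD)^{-1}$ acts trivially on $\BB^1(\rho,\tau)$ and therefore lies in $\op{Stab}_{\PG}(\sigma)=D_{24}$; likewise $(S_1\cdots S_{11})^6$ and $\Delta(S_1,\dots,S_{11})$ both act on $\BB^1(\rho,\tau)$ as the $\pi$ rotation (Lemma~\ref{LemSpecialWordsInSs}(1),(3)), so $w_2:=(S_1\cdots S_{11})^6\cdot\Delta(S_1,\dots,S_{11})^{-1}$ lies in $D_{24}$ as well. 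To finish I would check, by explicit matrix computation in the $P^2\F_3$ model, that $w_1$ has order $12$, that $w_2$ has order $2$, and that $w_2 w_1 w_2^{-1}=w_1^{-1}$; the remaining claims (that $w_1,w_2$ generate, together with the final sentence) then follow formally, because $D_{24}$ has a unique cyclic subgroup of index $2$ and its central involution cannot conjugate an order-$12$ element to its inverse. For the order of $w_1$ one may instead use Lemma~\ref{LemSpecialWordsInSs}(2) to evaluate $w_1^{12}$ as a scalar that is trivial in $\PG$, while noting that $S_1\cdots S_{11}$ rotates the $12$-gon by one step and $\Delta(\SA,\dots,\SD)$ fixes $\LDM$ pointwise, so that $w_1$ rotates the $12$-gon by one step.

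The hard part will be this last bookkeeping: every qualitative statement is supplied by Lemmas~\ref{LemMirrorsNearRho}, \ref{LemTandRHOcorrespond} and \ref{LemSpecialWordsInSs}, but one must verify carefully that no cancellation drops the order of $w_1$ below $12$---equivalently, that the scalar provided by Lemma~\ref{LemSpecialWordsInSs}(2) is a primitive sixth root of unity on $\LDM^\perp$---and that the dihedral relation $w_2 w_1 w_2^{-1}=w_1^{-1}$ holds on the nose. Both are finite computations, but they are where the proof actually has to touch the explicit coordinates.
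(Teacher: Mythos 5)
Your proposal is correct and follows the paper's approach: observe that $\op{Stab}_{\PG}(\sigma)$ fixes both $\rho$ and $\tau$, hence acts trivially on $\BB^1(\rho,\tau)$ and equals $\op{Stab}_{L_3(3)\semidirect2}(\rho)\iso D_{24}$, then place the two words inside it using Lemma~\ref{LemSpecialWordsInSs} and finish the orders and conjugation relation by explicit computation. The only difference is that you spell out, via Lemma~\ref{LemMirrorsNearRho} and the action on the $12$-gon of nearest mirrors, why that stabilizer is exactly $D_{24}$, where the paper simply asserts this as known.
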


\begin{proof}
    We chose $\sigma$ close enough to $\rho$ so that $\sigma$'s stabilizer
    lies in $\rho$'s.  Therefore $\sigma$'s stabilizer acts trivially
    on~$\BB^1(\rho,\sigma)=\BB^1(\rho,\tau)$.
    The stabilizer of $\tau$ is $L_3(3)\semidirect2$, and its subgroup fixing~$\rho$
    (or equivalently~$\sigma$)
    is dihedral of order~$24$.  That the listed elements stabilize~$\sigma$
    follows from lemma~\ref{LemSpecialWordsInSs}.   
    Computation establishes their orders and that the second inverts the first.
    We remark that
    the exponent~$^{-1}$ could be removed in the second word, because
    $\Delta(S_1,\dots,S_{11})$ has order~$2$.  However, the current form matches
    the lift of this word with $\sigma_j$'s in place of $S_j$'s, in 
    lemma~\ref{LemLocalGroupAtSigma}.
\end{proof}

\begin{lemma}
    \label{LemSpecialPaths}
    Let $\gamma$ resp.\ $\delta$ be the positively-directed
    circular arc in $\BB^1(\rho,\tau)$,
    beginning at~$\sigma$, and subtending angle $\pi/6$
    resp.\ $\pi/2$ around its center~$\rho$.
    Then 
    \begin{enumerate}
        \item
            \label{ItemIncreasingPath}
            the $\sigma_j\sigma_{j+1}\cdots\sigma_{j+10}$ are
            equal to each other and to $(\gamma,S_jS_{j+1}\cdots S_{j+10})$;
        \item
            \label{ItemDecreasingPath}
            the $\sigma_j\sigma_{j-1}\cdots\sigma_{j-10}$
            are equal to each other and to $(\gamma,S_{j}S_{j-1}\cdots S_{j-10})$;
        \item
            \label{ItemFundamentalPathAtoD}
            $\Delta(\sigmaA,\dots,\sigmaD)$ is equal to
            $(\gamma,\Delta(\SA,\dots,\SD))$.
        \item
            \label{ItemFundamentalPath1to11}
            $\Delta(\sigma_1,\dots,\sigma_{11})$ is equal to 
            $(\delta,\Delta(S_1,\dots,S_{11}))$.
    \end{enumerate}
\end{lemma}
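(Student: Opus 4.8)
The group-element factor in each of the four identities is supplied by Lemma~\ref{LemSpecialWordsInSs}: by part~\eqref{ItemIncreasingDecreasingSs} the words $S_jS_{j+1}\cdots S_{j+10}$ and $S_jS_{j-1}\cdots S_{j-10}$ are independent of~$j$ and act on the complex geodesic $\BB^1(\rho,\tau)$ as the positive $\pi/6$ rotation about~$\rho$, as does $\Delta(\SA,\dots,\SD)$, while by part~\eqref{ItemDeltaOfSs} $\Delta(S_1,\dots,S_{11})$ acts there as the rotation stated; in every case this rotation carries~$\sigma$ to the far endpoint of~$\gamma$ (resp.\ of~$\delta$). The right-hand sides make sense as elements of~$J$ because $\BB^1(\rho,\tau)$ meets~$\H$ only at its centre~$\rho$: its plane is spanned by~$\rho$ and the $L_4^{(1)}$-component of~$\tau$ exhibited in section~\ref{SecChange}, and that component is orthogonal to no root of~$L_4^{(1)}$, so a punctured disc about~$\rho$ in $\BB^1(\rho,\tau)$ lies in $U-\H$; in particular $\gamma,\delta\subset U-\H$. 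Granting the four identifications, the $j$-independence assertions follow for free, since the right-hand sides do not mention~$j$. So in each case it remains to homotope, rel endpoints and inside $U-\H$, the concatenated meridian path onto the named arc.

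Part~\eqref{ItemFundamentalPathAtoD} I would do inside the fibre over~$\rho$ of the fibration of Lemma~\ref{LemNeighborhoodU}, which is $B-\H$ with $B$ a ball about~$\rho$ in the $\BB^4$ orthogonal to~$\BDM$ at~$\rho$, and the removed mirrors are the $40$ mirrors of $\Aut(L_4)$, the Shephard--Todd group no.~$32$. By Lemma~\ref{LemKernel} the meridians $\sigmaA,\dots,\sigmaD$ are a \emph{standard} generating set of the braid group $\Br_5=\piorb\bigl((B-\H)/\!\Aut(L_4),\sigma\bigr)$. Scalar multiplication on $L_4^{(1)}\otimes\C$ makes $B-\H$ a $\C^*$-equivariant mirror complement in which $\BB^1(\rho,\tau)$ is a $\C^*$-stable disc through~$\rho$ and~$\sigma$, and the fundamental element~$\Delta$ of such a braid group, paired with its image in the reflection group, is represented by the loop rotating the basepoint through the argument by which that image scales the reflection representation --- here $e^{i\pi/6}$, by Lemma~\ref{LemSpecialWordsInSs}\eqref{ItemScalarsOnLDM} (a square root of the $e^{i\pi/3}$ there). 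Along $\BB^1(\rho,\tau)$ that loop is exactly the $\pi/6$-arc~$\gamma$, giving~\eqref{ItemFundamentalPathAtoD}. (This is the standard statement that the full, or half, twist of a well-generated complex reflection group is a rotation of the ambient scaling; alternatively one verifies it by the picture-drawing in the $\BB^4$ already used for meridians.)

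For parts~\eqref{ItemIncreasingPath}, \eqref{ItemDecreasingPath} and~\eqref{ItemFundamentalPath1to11} I would project along the same fibration. The induced homomorphism $J\to\piorb\bigl((\BDM-\HDM)/\PGDM,\rho\bigr)\iso\piorb(\M_{12})$ kills $\sigmaA,\dots,\sigmaD$ and sends each $\sigma_j$ to the meridian~$\rho_j$ of Theorem~\ref{ThmDeligneMostow}; hence it carries $\sigma_j\sigma_{j+1}\cdots\sigma_{j+10}$ to the increasing braid word $\rho_j\cdots\rho_{j+10}$ and $\Delta(\sigma_1,\dots,\sigma_{11})$ to the half-twist $\Delta(\rho_1,\dots,\rho_{11})$. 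By the braid-group calculus of Theorems~\ref{ThmBraidGroups} and~\ref{ThmPi1ModuliSpace} both of these downstairs elements lie in the local group of~$\piorb(\M_{12})$ at~$T$ --- the increasing word being the one-step rotation of the configuration --- so, $T$ and~$\rho$ having matched stabilizers by Lemma~\ref{LemTandRHOcorrespond}, the projections of $\sigma_j\cdots\sigma_{j+10}$ and of $\Delta(\sigma_1,\dots,\sigma_{11})$ are local-group elements at~$\rho$. A local-group element is determined by its image in~$\PGDM$, which here is the image of $S_j\cdots S_{j+10}$ resp.\ of $\Delta(S_1,\dots,S_{11})$; since $\gamma,\delta$ lie in the fibre over~$\rho$, this is also the image of $(\gamma,S_j\cdots S_{j+10})$ resp.\ $(\delta,\Delta(S_1,\dots,S_{11}))$. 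Hence $\sigma_j\cdots\sigma_{j+10}$ and $(\gamma,S_j\cdots S_{j+10})$ --- and likewise the two sides of~\eqref{ItemFundamentalPath1to11} --- differ by an element of the kernel $\Br_5$, i.e.\ by a loop at~$\sigma$ in the fibre. To kill that loop I localize: the paths $\mu_{\sigma,s_0},\dots,\mu_{\sigma,s_{11}},\mu_{\sigma,\sA},\dots,\mu_{\sigma,\sD}$ and the arcs $\gamma,\delta$ all lie in a ball $B_\rho$ about~$\rho$ in which, by Lemma~\ref{LemMirrorsNearRho} and the batch enumeration around~$c$ of section~\ref{SecChange}, $\H$ is just the $40$ mirrors through~$\rho$ together with the near-miss mirrors $s_0^\perp,\dots,s_{11}^\perp$ and nothing else; inside $B_\rho-\H$ one identifies the $\sigma_j$ and $\sigmaA,\dots,\sigmaD$ with standard generators of the braid group of this local arrangement and reads off that the discrepancy is trivial. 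For~\eqref{ItemFundamentalPath1to11} it also helps that, once~\eqref{ItemIncreasingPath} holds, $(\sigma_1\cdots\sigma_{11})^6=(\delta,(S_1\cdots S_{11})^6)$ by splicing six copies of the $\pi/6$-arc.

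The real obstacle is this last point: controlling the $\Br_5$-component of the horizontal products. The projection to $\piorb(\M_{12})$ and the braid-group facts pin down the ``horizontal shadow'' of each path automatically, but the extension $1\to\Br_5\to J\to\piorb(\M_{12})\to1$ is non-split, so the lift is not determined by that shadow together with the group-element factor; one has to actually build the homotopy (or show the discrepancy is central of the correct finite order and hence trivial) by working in the local model around~$\rho$. Part~\eqref{ItemFundamentalPathAtoD}, taking place entirely in the fibre where the geometry is that of one finite complex reflection group, should be comparatively routine by contrast.
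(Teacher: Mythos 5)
Your high-level setup — reduce each identity to a comparison inside a fibration, control the base via the projection to $\piorb(\M_{12})$, and then worry about the fiber component — matches the paper's strategy for parts \eqref{ItemIncreasingPath}, \eqref{ItemDecreasingPath} and \eqref{ItemFundamentalPath1to11}. But you stop exactly where the real work begins, and you say so yourself: ``one has to actually build the homotopy (or show the discrepancy is central of the correct finite order and hence trivial) by working in the local model around~$\rho$.'' Saying ``inside $B_\rho-\H$ one identifies the $\sigma_j$ and $\sigmaA,\dots,\sigmaD$ with standard generators of the braid group of this local arrangement and reads off that the discrepancy is trivial'' is not an argument; it is a restatement of the thing to be proved. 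This is the heart of the lemma and the paper spends the bulk of its proof on it. The mechanism is concrete: one restricts to the $10$-ball $V=U\cap\BB^{10}$ spanned by $\BDM$ and~$\tau$, where the fibration of lemma~\ref{LemNeighborhoodU} degenerates to a \emph{punctured-disk} bundle, so that once the base-projection is shown trivial the discrepancy lies in $\pi_1$ of a punctured disk, i.e.\ in~$\Z$, and is a single winding number. That winding number is then computed by explicitly projecting the piecewise-geodesic path $\beta$ onto $\BB^1(\rho,\tau)$ via the formula $\alpha(t)=\text{const}/\ip{\beta(t)}{\rho}$ and verifying (Figure~\ref{fig-orange-path}) that the resulting piecewise-linear path in~$\C$ stays in the right halfplane and subtends the angle~$\pi/6$. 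You do not invoke the slice~$V$, you do not reduce the kernel from~$\Br_5$ to~$\Z$, and you do not compute the winding number, so the gap remains open.

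For part~\eqref{ItemFundamentalPathAtoD} your route is genuinely different from the paper's, and it is worth noting the tradeoff. You appeal to a general principle that, for a well-generated finite complex reflection group, the braid-group fundamental element paired with its reflection-group image is represented by the loop that rotates the basepoint through the scalar by which that image acts. The paper instead proves this instance by hand: it compares the images of $\Delta(\sigmaA,\dots,\sigmaD)$ and $(\gamma,\Delta(\SA,\dots,\SD))$ in the abelianization~$\Z$ of~$\Br_5$ (counting $40$ mirrors each hit with multiplicity~$3$ by the full circle versus $\Delta$ being a product of $10$ generators), and then checks that both conjugate $\sigmaA,\dots,\sigmaD$ the same way, using the one-parameter group~$F_t$ to sweep out the required disk. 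Your appeal to a general fact could in principle work, but you neither give a reference nor verify it in the form needed here (which is an orbifold, basepointed statement about a specific copy of Shephard--Todd no.~$32$ sitting as a fiber), and the verification would amount to roughly the same amount of work as the paper's direct argument. As stated, this part of your proposal is also not complete.

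One more small point: your claim that $\BB^1(\rho,\tau)$ meets $\H$ only at~$\rho$ needs the observation that no mirror passes through~$\tau$ (so no mirror contains both $\rho$ and $\tau$), plus the freedom to shrink the ball around~$\rho$ (so the $\HDM$-mirrors, which miss~$\rho$, stay away from the arcs $\gamma,\delta$). Your phrasing ``that component is orthogonal to no root of~$L_4^{(1)}$'' by itself only rules out mirrors containing the whole geodesic, not mirrors crossing it elsewhere.
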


\begin{proof}[Proof of lemma~\ref{LemSpecialPaths}\eqref{ItemIncreasingPath}--\eqref{ItemDecreasingPath}]
    We will prove 
    $\sigma_1\cdots\sigma_{11}=(\gamma,S_1\cdots S_{11})$.  The rest of
    \eqref{ItemIncreasingPath}
    follows by symmetry.  Namely, the $\PG$-stabilizer of~$\sigma$
    preserves~$\gamma$, and by lemma~\ref{LemStabilizerOfSigma} contains an element that
    cyclically permutes the $s_i$ (hence the $\sigma_i$ and~$S_i$).
    This permutation leaves $(\gamma,S_1\cdots S_{11})$ invariant because
    $S_j S_{j+1}\cdots S_{j+10}$ is independent of~$j$ (lemma~\ref{LemSpecialWordsInSs}).
    By lemma~\ref{LemStabilizerOfSigma}, another element of $\PG$ stabilizes $\sigma$
    and reverses the order of the $s_i$, so
    \eqref{ItemDecreasingPath} follows from
    \eqref{ItemIncreasingPath}.

    We write $\beta$ for the path underlying
    $\sigma_1\cdots\sigma_{11}$, namely
    \begin{center}
    \begin{tabular}{lll}
        $\beta:=\mu_{\sigma,s_1}$
        &followed by
        &$S_1(\mu_{\sigma,s_2})$
        \\
        &followed by
        &$S_1S_2(\mu_{\sigma,s_3})$
        \\
        &followed by&$\ldots$
        \\
        &followed by
        &$S_1S_2\cdots S_{10}(\mu_{\sigma,s_{11}})$
    \end{tabular}
    \end{center}
    It is obvious that $\sigma_1\cdots\sigma_{11}$ has the same underlying
    element of $\PG$ as $(\gamma,S_1\cdots S_{11})$.  In particular,
    the endpoints of
    $\beta$ and $\gamma$ coincide.  So it remains to show
    that the loop $\beta\gamma^{-1}$ (ie, $\beta$ followed by  $\reverse(\gamma)$) 
    is nullhomotopic in~$U-\H$.  In fact we will show that it is nullhomotopic
    in $V-\H$, where $V$ is the intersection of~$U$ with 
    the $\BB^{10}$ spanned by $\BDM$ 
    and $\tau$.  It follows from lemma~\ref{LemNeighborhoodU}  that
    $V-\H$ is a punctured-disk
    bundle over
    $\BDM-\HDM$.  

    Our first step is to show that the projection of $\beta\gamma^{-1}$
    to $\BDM-\HDM$ is nullhomotopic.   To do this we find braids representing
    the projections of $\sigma_1\cdots\sigma_{11}$ and $(\gamma,S_1\cdots S_{11})$ to
    $\piorb((\BDM-\HDM)/\PGDM,\rho)=\piorb(\M_{12},T)$.  By theorem~\ref{ThmDeligneMostow}, the former
    is represented by the braid $\rho_1\cdots\rho_{11}$.  The latter 
    projects to
    $$
    \bigl((\hbox{constant path at~$\rho$}),S_1\cdots S_{11}\bigr).
    $$
    This is represented by a braid which keeps all~$12$ points
    evenly spaced on the unit circle (because its underlying path
    in the moduli space is constant), and permutes them in the same way as
    $\rho_1\cdots\rho_{11}$ (because the element of $\PGDM$
     underlying $\rho_1\cdots\rho_{11}$ is $S_1\cdots S_{11}$).
    That is, $(\gamma,S_1\cdots S_{11})$ is represented by
    the braid in which all~$12$ points of~$T$ 
    move $\pi/6$ clockwise  around the unit circle with
    uniform speed. 
    The first braid, followed by the inverse of the second,
    is trivial in $\Br_{12}(\cpone)$.  It follows that the projection
    of $\beta\gamma^{-1}$ to $\BDM-\HDM$ represents the trivial element
    of $\piorb(\M_{12})$ and is therefore nullhomotopic.  

    Now the exact sequence on $\pi_1$, for the punctured-disk fibration
    $V-\H\to\BDM-\HDM$, shows that
    $\beta\gamma^{-1}$
    is 
    homotopic into the fiber over~$\rho$.  
    So it will be enough to show that $\beta\gamma^{-1}$
    has trivial winding number around~$\BDM$ in $\BB^{10}$.
    For this, we may replace $\beta$ by its projection $\alpha$
    to $\BB^1(\rho,\tau)$.  In summary: it will be enough to prove that
    the specific loop $\alpha\gamma^{-1}$ is nullhomotopic in 
    the specific punctured disk $V\cap\BB^1(\rho,\tau)$.  This becomes
    obvious upon drawing it, which we now prepare to do.

\begin{figure}
\begin{tikzpicture}[scale=.2] 
    \begin{scope}[xshift=0cm]
        \tikzset{middlearrow/.style={
        decoration={markings,
            mark= at position 0.7 with {\arrow{#1}} ,
        },
        postaction={decorate}
    }}
\draw[lightgray, ->] (0,0) -- (50,0);
\draw[lightgray, ->] (0,0) -- (43.3012702, - 25);
\draw[fill] (0, 0) circle [radius=0.1];
        \node [below] at (0,0) {$0$};
\draw[thick,orange]
(44.7846096908265, 0) --
(45.3619599600162, 0) --
(45.6506350946110, -0.500000000000004) --
(46.7279853638006, -0.788675134594815) --
(47.0166604983954, -1.86602540378444) --
(48.3826859021798, -2.65470053837925) --
(48.3826859021798, -4.23205080756888) --
(49.7487113059643, -5.59807621135332) --
(49.2487113059643, -7.46410161513776) --
(50.3260615751539, -9.33012701892219) --
(49.2487113059643, -11.1961524227066) --
(49.8260615751539, -13.3508529610859);
        \draw[thick,orange,middlearrow={latex}] (49.8260615751539, -13.3508529610859) --
(48.2487113059643, -14.9282032302755);
    \draw[thick,orange]
    (48.2487113059643, -14.9282032302755) --
(48.2487113059643, -17.0829037686548) --
(46.3826859021798, -18.1602540378444) --
(45.8826859021798, -20.0262794416288) --
(44.0166604983954, -20.5262794416288) --
(43.2279853638006, -21.8923048454133) --
(41.6506350946110, -21.8923048454133) --
(40.8619599600162, -22.6809799800081) --
(39.7846096908265, -22.3923048454133) --
(39.2846096908265, -22.6809799800081) --
(38.7846096908265, -22.3923048454133);
\draw[fill, red] (44.7846096908265, 0) circle [radius=0.1];
\draw[fill, blue]  (45.3619599600162, 0) circle [radius=0.1];
\draw[fill, red] (45.6506350946110, -0.500000000000004)  circle [radius=0.1];
\draw[fill, blue]  (46.7279853638006, -0.788675134594815)  circle [radius=0.1];
\draw[fill, red] (47.0166604983954, -1.86602540378444)  circle [radius=0.1];
\draw[fill, blue]  (48.3826859021798, -2.65470053837925)  circle [radius=0.1];
\draw[fill, red] (48.3826859021798, -4.23205080756888)  circle [radius=0.1];
\draw[fill, blue]  (49.7487113059643, -5.59807621135332)  circle [radius=0.1];
\draw[fill, red] (49.2487113059643, -7.46410161513776)  circle [radius=0.1];
\draw[fill, blue]  (50.3260615751539, -9.33012701892219)  circle [radius=0.1];
\draw[fill, red] (49.2487113059643, -11.1961524227066)  circle [radius=0.1];
\draw[fill, blue]  (49.8260615751539, -13.3508529610859)  circle [radius=0.1];
\draw[fill, red] (48.2487113059643, -14.9282032302755)  circle [radius=0.1];
\draw[fill, blue]  (48.2487113059643, -17.0829037686548)  circle [radius=0.1];
\draw[fill, red] (46.3826859021798, -18.1602540378444)  circle [radius=0.1];
\draw[fill, blue]  (45.8826859021798, -20.0262794416288)  circle [radius=0.1];
\draw[fill, red] (44.0166604983954, -20.5262794416288)  circle [radius=0.1];
\draw[fill, blue]  (43.2279853638006, -21.8923048454133)  circle [radius=0.1];
\draw[fill, red] (41.6506350946110, -21.8923048454133)  circle [radius=0.1];
\draw[fill, blue]  (40.8619599600162, -22.6809799800081)  circle [radius=0.1];
\draw[fill, red] (39.7846096908265, -22.3923048454133)  circle [radius=0.1];
\draw[fill, blue]  (39.2846096908265, -22.6809799800081)  circle [radius=0.1];
\draw[fill, red] (38.7846096908265, -22.3923048454133)  circle [radius=0.1];
%
\node [below left] at (44.7846096908265, 0) {begin};
    \node [left] at (38.7846096908265, -22.3923048454133) {end};
\node at (9,-2.5) {$\pi/6$};
\end{scope}
\end{tikzpicture}
\caption{The piecewise linear path 
$t\mapsto\ip{\beta(t)}{\rho}$ in~$\C$, up to scaling.
Its reciprocal, scaled, is the path~$\alpha$ at the heart of the proof of
lemma~\ref{LemSpecialPaths}\eqref{ItemIncreasingPath}.}
\label{fig-orange-path}
\end{figure}


    First, each 
    $\mu_{\sigma,s_i}$ was defined as a perturbation of
    the concatenation of two geodesic segments.  The first segment joins
    $\sigma$ to the projection of $\sigma$ into $s_i^\perp$, which the second 
    segment joins to $S_i(\sigma)$.  The perturbation is to avoid
    hitting $s_i^\perp$ at the concatenation point, and can be made
    arbitrarily small.  The concatenation point does not lie in~$\BDM$, so the
    perturbation does
    not affect the  winding number of $\beta\gamma^{-1}$ around $\BDM$.
    So, for the rest of the proof, we will ignore the perturbation
    and regard $\mu_{\sigma,s_i}$
    as the concatenation of the two segments.   (Or you can imagine
    that the perturbation is so small that it becomes invisible in 
    figure~\ref{fig-orange-path}.)  This makes $\beta$
    into a concatenation of $22$ specific geodesic segments in $\BB^{10}$. 

    Second, if two vectors of negative norm in~$\C^{14}$ have
    negative inner product, then the segment joining them in~$\C^{14}$ 
    represents the geodesic joining the corresponding points in~$\BB^{13}$.
    Therefore $\mu_{\sigma,s_i}$ is represented by the line segment in~$\C^{14}$
    from  $\sigma$ to its linear projection
    onto $s_i^\perp$, followed by the line segment from there to $S_i(\sigma)$.
    In this way, we may regard $\beta$ as path in $\C^{14}$.  

    We orthogonally project  $\beta$ into
    $\C\langle\rho,\tau\rangle$ and decompose the result 
    as a linear combination of $\rho$ and $\tau-\rho$.  These are
    orthogonal to each other, with norms 
     $-36-24\sqrt3$ and
    $30+16\sqrt3$ respectively.  Therefore 
    $$
    \hbox{the projection of $\beta(t)$ to
    $\C\langle\rho,\tau\rangle$}=
    \frac{\ip{\beta(t)}{\rho}}{-36-24\sqrt3}\,\rho
    +\frac{\ip{\beta(t)}{\tau-\rho}}{30+16\sqrt3}(\tau-\rho)
    $$
    The coefficient of the second component is constant in time,
    because $\beta(t)$ differs from $\beta(0)=\sigma$ by a linear combination
    of $s_0,\dots,s_{11}$.  These are orthogonal to $\tau-\rho$
    because  $\rho$ was defined as the projection of $\tau$ to their
    span.  The constant \emph{does} depend on our choice of 
    basepoint~$\sigma=\beta(0)$, and is nonzero 
    because $\sigma\in\overline{\rho\tau}-\{\rho\}$.  It approaches~$0$ 
    as the choice of $\sigma$ approaches~$\rho$.
    On the other hand, the coefficient of the first component depends on time,
    but does not depend on the choice of
    $\sigma\in\overline{\rho\tau}-\{\rho\}$.  This is because any two
    candidates for $\sigma$ differ by a multiple
    of $\tau-\rho$.    
    
     We identify $P\C\langle\rho,\tau\rangle$ with $\cpone=\C\cup\{\infty\}$
     by sending $u\rho+v(\tau-\rho)$ to $v/u$.  
     This identifies $\rho$ with $0$, and
     $\BB^1(\rho,\tau)$ with an open disk
     centered there. (If one cares, the radius is $\sqrt{-\rho^2/(\tau-\rho)^2}$.)
     Most importantly, 
     we get the simple formula
     $$\alpha(t)=\frac{\hbox{constant depending on~$\sigma$, which approaches~$0$
     as $\sigma\to\rho$}}{\ip{\beta(t)}{\rho}, \hbox{ which is independent of~$\sigma$}},$$
     where
     $\beta(t)$ is a concatenation of $22$ segments in $\C^{14}$, starting
     at~$\beta(0)=\sigma$.

     The path appearing in
     figure~\ref{fig-orange-path} is {\it not}~$\alpha$.  Rather, the figure shows the piecewise
     linear path 
     $t\mapsto\ip{\beta(t)}{\rho}$, up to scale.  
    It is clearly homotopic, rel endpoints, to a clockwise circular arc 
    of angle $\pi/6$ around~$0$.  To check that the picture does not deceive,
    it is enough to verify that  the
    endpoints of the segments lie in the right halfplane in~$\C$.
    (This is why we work with the piecewise linear path,
    rather than $\alpha$ itself.)
    Therefore $\alpha$ is homotopic, rel endpoints, to the 
    \textit{counter}clockwise
    circular arc of angle $\pi/6$ in~$\BB^1(\rho,\tau)$, starting at~$\sigma$
    and centered at~$\rho$.  This is the same path as $\gamma$, proving that
    $\alpha\gamma^{-1}$ is nullhomotopic.
\end{proof}


\begin{proof}[Proof of lemma~\ref{LemSpecialPaths}\eqref{ItemFundamentalPathAtoD}]
    First we claim that $(\gamma,\Delta(\SA,\dots,\SD))$
    has the same image in the abelianization $\Z$ of $\Br_5$
    as $\Delta(\sigmaA,\dots,\sigmaD)$.
    To see this, observe that its twelfth power is the circle
     in 
     $\BB^1(\rho,\tau)$ centered at~$\rho$.
     This loop encircles the $40$ mirrors once each.  Since a 
    loop around one of these mirrors is (conjugate to) the $3$rd 
    power of one of the standard braid generators, it follows that
    the image of
    $(\gamma,\Delta(\SA,\dots,\SD))$ in~$\Z$ is
    $\frac{1}{12}(40\cdot3)=10$ 
    times that of a standard generator.  On the other
    hand, $\Delta(\sigmaA,\dots,\sigmaD)$ is defined as
    a product of $10$ standard generators.  This proves our claim.

    Next we claim that conjugation by $(\gamma,\Delta(\SA,\dots,\SD))$
    exchanges $\sigmaA\leftrightarrow\sigmaD$ and
    $\sigmaB\leftrightarrow\sigmaC$.  That is,
    it permutes the generators
    of $\Br_5$ in the same way that $\Delta(\sigmaA,\dots,\sigmaD)$
    does.  It follows that they differ by a central element of~$\Br_5$.
    It is well-known that $Z(\Br_5)$ is cyclic, generated by
    $\Delta(\sigmaA,\dots,\sigmaD)^2$.  Since the center
    maps faithfully to the abelianization, the previous paragraph
    shows that $\Delta(\sigmaA,\dots,\sigmaD)$ and
    $(\gamma,\Delta(\SA,\dots,\SD))$ are actually equal,
    not just equal up to center.

    It remains to prove the claim.  We 
    will show 
    \begin{equation}
        \label{EqPathConjugation}
    (\gamma,\Delta(\SA,\dots,\SD))\,\sigmaD\,(\gamma,\Delta(\SA,\dots,\SD))^{-1}=\sigmaA;
    \end{equation}
    the  same argument proves that $(\gamma,\Delta(\SA,\dots,\SD))$ also sends
    $\sigmaA$ to $\sigmaD$, and exchanges
    $\sigmaB\leftrightarrow\sigmaC$.  The element of $\PG$ underlying the left
    side of \eqref{EqPathConjugation} is 
    $$\Delta(\SA,\dots,\SD)\,\SD\,\Delta(\SA,\dots,\SD)^{-1}.$$
    Since $\Delta(\SA,\dots,\SD)$ exchanges $\SA\leftrightarrow\SD$, this
    simplifies to $\SA$.  This agrees with the element of $\PG$
    underlying the right side of \eqref{EqPathConjugation}.

    To simplify analysis of the underlying path, we 
    write $F_t$ for the $1$-parameter subgroup of $\U(13,1)$ 
    that acts trivially on
    $\LDM\tensor\C$ and by the scalar 
    by $e^{\pi i t/3}$ on its orthogonal complement.  
    On every complex line through~$\rho$, that is orthogonal to
    $\BDM$, it acts by $2\pi t/3$ rotation.  In particular, 
    $\gamma(t)=F_t(\sigma)$.  
    Also, although the $F_t$ do not all preserve~$\H$, they do preserve
    $U\cap\H$, which is the only part of~$\H$ that will be important in this
    proof.

    We also recall that the path underlying
    $\sigmaD$ is $\mu_{\sigma,\sD}$, moving from $\sigma$ to very near
    $\sD^\perp$, then $2\pi/3$ of the way around $\sD^\perp$, and 
    then onward to $\SD(\sigma)$.  
    What remains is to consider the path underlying the left side of
    \eqref{EqPathConjugation}.
    This is 
    $\gamma$, followed by the $\Delta(\SA,\dots,\SD)$-image
    of $\mu_{\sigma,\sD}$,
    followed by the $\Delta(\SA,\dots,\SD)\SD$-image of 
    $\reverse(\gamma)$.  The first part is
    the path $t\mapsto F_t(\sigma)$ as $t$ varies over $[0,1]$.  
    Because $\Delta(\SA,\dots,\SD)$
    exchanges $\sA^\perp$ with $\sD^\perp$, the second part
    is $\mu_{F_1(\sigma),\sA}$, from $F_1(\sigma)$
    to $\SA(F_1(\sigma))=F_1(\SA(\sigma))$.  
    The third part  is $t\mapsto F_{1-t}(\SA(\sigma))$.  We must show that this
    $3$-part path, followed by $\reverse(\mu_{\sigma,\sA})$,
    bounds a disk in $U-\H$.  
    This is almost obvious: consider the surface swept out
    by the $F_t$-images of  $\mu_{\sigma,\sA}$ as $t$ varies
    from $0$ to~$1$ (or equivalently, the surface swept out by
    the paths $\mu_{F_t(\sigma),\sA}$).  
\end{proof}

Before we prove the last part of lemma~\ref{LemSpecialPaths}, we remark that
we do not need it to describe~$J$ in theorem~\ref{ThmPi1ofUmodStabilizer} below.
It is needed 
for the second generator in our 
 description of
the local group at~$\sigma$ in lemma~\ref{LemLocalGroupAtSigma}, and hence
to establish the second relation in~$G$ in theorem~\ref{ThmNewRelations}.
However, that relation
is not needed in the proof of the main theorem.  So the reader may
be inclined to skip it.

\begin{proof}[Proof of lemma~\ref{LemSpecialPaths}\eqref{ItemFundamentalPath1to11}]
    In the ordinary $12$-strand
    braid group, $\Delta(\sigma_1,\dots,\sigma_{11})$
    can be described by saying that the $12$ strands
    are embedded in a strip that rotates about its midline through an angle
    $\pi$.  This represents a
    constant path in~$\M_{12}$, so $\Delta(\sigma_1,\dots,\sigma_{11})$
    is homotopic, rel endpoints, into the punctured disk which is the
    fiber of $V-\H$ over~$\rho$.  By 
    lemma~\ref{LemSpecialWordsInSs}\eqref{ItemDeltaOfSs}, its final
    endpoint is halfway around this fiber from $\sigma$.  To finish
    the proof, it is enough to prove that $\Delta(\sigma_1,\dots,\sigma_{11})^2$
    encircles the puncture $\rho$ once positively.  This follows
    from its equality with $(S_1\cdots S_{11})^{12}$ in $\Br_{12}$,
    and our description of $I$ in
    part \eqref{ItemIncreasingPath} of this lemma.
\end{proof}

\begin{theorem}
    \label{ThmPi1ofUmodStabilizer}
    The orbifold fundamental group
    $J=\piorb\bigl((U-\H)/\PGDMsw,\,\sigma\bigr)$ has generators
    $\sigma_0,\dots,\sigma_{11},\sigmaA,\dots,\sigmaD$ and
    defining relations
    \begin{enumerate}
        \item
            \label{EqAtilde11A4Relations}
            the Artin relations of the $\Atilde_{11} A_4$ diagram; 
        \item
            \label{EqEqualityOfIs}
            all the $I_j:=\sigma_j\sigma_{j+1}\cdots\sigma_{j+10}$ coincide; write $I$ for their common value;
        \item
            \label{EqEqualityOfDs}
            all the $D_j:=\sigma_j\sigma_{j-1}\cdots,\sigma_{j-10}$ coincide; write $D$ for their common value;
        \item
            \label{ItemIDisWord}
            $ID=\Delta(\sigmaA,\dots,\sigmaD)^2$;
        \item
            \label{ItemD6isI6}
            $D^6=I^6$.
    \end{enumerate}
    Furthermore, for all $k=0,\dots,11$ we have
        $I\sigma_k I^{-1}{}=\sigma_{k+1}$ and
        $D\sigma_k D^{-1}{}=\sigma_{k-1}$ and
        $\Delta(\sigma_1,\dots,\sigma_{11})\sigma_k\Delta(\sigma_1,\dotsm\sigma_{11})^{-1}
        {}=
        \sigma_{12-k}$
\end{theorem}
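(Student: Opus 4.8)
The plan is to follow the recipe set up in the discussion just before the theorem. We already know that $J$ is generated by $\sigma_0,\dots,\sigma_{11},\sigmaA,\dots,\sigmaD$, and that a defining set of relations is obtained by taking all Artin relators of the $\Atilde_{11}A_4$ diagram that involve one of $\sigmaA,\dots,\sigmaD$ (in particular the commutativity of each $\sigma_j$ with each of $\sigmaA,\dots,\sigmaD$), together with one relation of the form \eqref{EqWords} for each defining relator of $\piorb(\M_{12})$ listed in the $n=12$ case of Theorem~\ref{ThmPi1ModuliSpace}. The braid relators among the $\rho_j$ lift with empty right-hand side to the $\Atilde_{11}$ braid relations among the $\sigma_j$, which hold in $J$ as part of the assertion, recalled just before the theorem, that $\sigma_0,\dots,\sigma_{11},\sigmaA,\dots,\sigmaD$ satisfy the $\Atilde_{11}A_4$ Artin relations (via Theorem~\ref{ThmBasepointChange}); combined with the Artin relators involving $\sigmaA,\dots,\sigmaD$ this is precisely \eqref{EqAtilde11A4Relations}. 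That those last relators define the normal subgroup $\Br_5=\langle\sigmaA,\dots,\sigmaD\rangle$ is Lemma~\ref{LemKernel}, and that the $\sigma_j$ centralize $\Br_5$ is immediate since the two blocks of the diagram are unjoined. So the task reduces to lifting the remaining four relators of $\piorb(\M_{12})$ and identifying the lifts with \eqref{EqEqualityOfIs}--\eqref{ItemD6isI6}.

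The relators ``all $I_j$ coincide'' and ``all $D_j$ coincide'' lift with empty right-hand side: by Lemma~\ref{LemSpecialPaths}\eqref{ItemIncreasingPath},\eqref{ItemDecreasingPath} the products $\sigma_j\sigma_{j+1}\cdots\sigma_{j+10}$ already all coincide in $J$, each being equal to $(\gamma,S_jS_{j+1}\cdots S_{j+10})$, and likewise the $\sigma_j\sigma_{j-1}\cdots\sigma_{j-10}$; this yields \eqref{EqEqualityOfIs} and \eqref{EqEqualityOfDs}, with common values denoted $I$ and $D$. The substantive step is the lift of the relator $ID=1$. Using Lemma~\ref{LemSpecialPaths}\eqref{ItemIncreasingPath} with $j=1$ and \eqref{ItemDecreasingPath} with $j=11$, represent $I$ as $(\gamma,S_1\cdots S_{11})$ and $D$ as $(\gamma,S_{11}S_{10}\cdots S_1)$. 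Multiplying in the orbifold fundamental group, $ID$ is represented by $\gamma$ followed by the $(S_1\cdots S_{11})$-image of $\gamma$, and it carries the element $S_1\cdots S_{10}S_{11}^2S_{10}\cdots S_1$ of $\PG$. By Lemma~\ref{LemSpecialWordsInSs}\eqref{ItemIncreasingDecreasingSs}, $S_1\cdots S_{11}$ acts on $\BB^1(\rho,\tau)$ as the positive $\pi/6$ rotation about $\rho$, so the concatenated path is exactly the positive $\pi/3$ circular arc from $\sigma$; and by Lemma~\ref{LemSpecialWordsInSs}\eqref{ItemScalarsOnLDM} the carried element equals $\Delta(\SA,\dots,\SD)^2$ in $\PG$. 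On the other hand, Lemma~\ref{LemSpecialPaths}\eqref{ItemFundamentalPathAtoD} gives $\Delta(\sigmaA,\dots,\sigmaD)=(\gamma,\Delta(\SA,\dots,\SD))$, and since $\Delta(\SA,\dots,\SD)$ also acts by the positive $\pi/6$ rotation on $\BB^1(\rho,\tau)$ (same lemma item), $\Delta(\sigmaA,\dots,\sigmaD)^2$ is represented by the positive $\pi/3$ arc from $\sigma$ carrying $\Delta(\SA,\dots,\SD)^2$. The two pairs (path, group element) coincide, so $ID=\Delta(\sigmaA,\dots,\sigmaD)^2$ in $J$, which is \eqref{ItemIDisWord}.

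For the relator $I^{12}=D^{12}=1$, which for the even number $n=12$ may be replaced by $I^6=D^6$, the lift again has empty right-hand side. Raising the descriptions above to the sixth power, $I^6$ and $D^6$ are each represented by the positive $\pi$ arc from $\sigma$ (six consecutive positive $\pi/6$ rotations), carrying $(S_1\cdots S_{11})^6$ and $(S_{11}\cdots S_1)^6$ respectively; these two elements of $\PG$ coincide, either by direct computation in $\Gamma$ or because they act identically on $\BB^1(\rho,\tau)$ by Lemma~\ref{LemSpecialWordsInSs}\eqref{ItemIncreasingDecreasingSs}, have equal images in $\PGDM$ since $I^6=D^6$ holds in $\piorb(\M_{12})$, and both lie in $\PGDMsw$. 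Hence $I^6=D^6$ in $J$, which is \eqref{ItemD6isI6}. This supplies all the words \eqref{EqWords}, so \eqref{EqAtilde11A4Relations}--\eqref{ItemD6isI6} are a defining set of relations for $J$.

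It remains to check the conjugation formulas. Once \eqref{EqEqualityOfIs} and the $\Atilde_{11}$ braid relations hold, \eqref{EqIIncreases} gives $I\sigma_kI^{-1}=\sigma_{k+1}$ for every $k$ (for each $k$ choose $j$ with $k\notin\{j-1,j-2\}$), and \eqref{EqDDecreases} with \eqref{EqEqualityOfDs} gives $D\sigma_kD^{-1}=\sigma_{k-1}$. For the last formula, $\langle\sigma_1,\dots,\sigma_{11}\rangle$ is a homomorphic image of the Artin group of the chain $A_{11}$, in which the fundamental element $\Delta(\sigma_1,\dots,\sigma_{11})$ conjugates $\sigma_k$ to $\sigma_{12-k}$ for $k=1,\dots,11$; the remaining case $k=0$ asserts that $\Delta(\sigma_1,\dots,\sigma_{11})$ commutes with $\sigma_0$, which follows from the palindromic symmetry $\sigma_j\mapsto\sigma_{12-j}$ of the $\Atilde_{11}$ braid relations (it fixes $\sigma_0$), or, geometrically, from the fact that the half-twist of the twelve punctures reverses their cyclic order and preserves the slot carrying $\sigma_0$. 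The step I expect to be the real obstacle is the identification of the lift of $ID=1$ with exactly $\Delta(\sigmaA,\dots,\sigmaD)^2$ rather than with some unrecognizable word in $\sigmaA,\dots,\sigmaD$: this is precisely what the path computation of Lemma~\ref{LemSpecialPaths} combined with the $\PG$-level identity of Lemma~\ref{LemSpecialWordsInSs}\eqref{ItemScalarsOnLDM} is engineered to deliver, with the analogous bookkeeping behind $I^6=D^6$ being the second delicate point.
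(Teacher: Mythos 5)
Your proposal follows the paper's proof essentially step for step: same recipe from the discussion preceding Lemma~\ref{LemSpecialWordsInSs}, same invocations of Lemma~\ref{LemKernel}, Lemma~\ref{LemSpecialPaths} and Lemma~\ref{LemSpecialWordsInSs}, and the same path-plus-group-element bookkeeping for \eqref{ItemIDisWord} and \eqref{ItemD6isI6}. The one soft spot is the $k=0$ case of the $\Delta(\sigma_1,\dots,\sigma_{11})$ conjugation formula: appealing to ``palindromic symmetry'' without more is circular, since whether conjugation by $\Delta$ realizes that symmetry on $\sigma_0$ is exactly what needs proof; the paper closes this by noting that $\sigma_k\mapsto\sigma_{12-k}$ and conjugation by $\Delta$ are both automorphisms of $J$ that agree on $\sigma_1,\dots,\sigma_{11}$ and on $\sigmaA,\dots,\sigmaD$, and that relation \eqref{EqEqualityOfIs} expresses $\sigma_0$ in terms of $\sigma_1,\dots,\sigma_{11}$, forcing agreement on $\sigma_0$ as well.
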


Here and henceforth we use $I_j,D_j,I,D$ for these words in the $\sigma_k$'s
rather than the $\rho_k$'s.  To avoid confusion, we will not use these
symbols in the proof until after establishing 
\eqref{EqAtilde11A4Relations}--\eqref{ItemD6isI6}.

\begin{proof}
    We will
    establish the stated relations; that they are defining relations
    follows.  This
    is because they include defining relations for the normal
    subgroup $\Br_5$
    generated by $\sigmaA,\dots,\sigmaD$, and relations saying
    how $\sigma_0,\dots,\sigma_{11}$ conjugate $\sigmaA,\dots,\sigmaD$,
    and also one relation  of the form \eqref{EqWords}, for each 
    defining relator of $\piorb(\M_{12})$ from 
    lemma~\ref{ThmPi1ModuliSpace}.  
    (See the discussion before lemma~\ref{LemSpecialWordsInSs}.)
    Since $12$ is even, we may use the alternative form
    $(\rho_1\cdots\rho_{11})^6=(\rho_{11}\cdots\rho_1)^6$ 
    of lemma~\ref{ThmPi1ModuliSpace}\eqref{ItemM0nInEqualsDnEquals1}.
    The $\Atilde_{11}A_4$ Artin
    relations hold by \cite[Thm.\ 4.4]{Basak-bimonster-2}.
    Lemma~\ref{LemSpecialPaths} establishes 
    \eqref{EqEqualityOfIs} and \eqref{EqEqualityOfDs}.

    \eqref{ItemIDisWord}
    Lemma~\ref{LemSpecialWordsInSs} 
    shows that the elements
    of $\PG$ underlying 
    $$\sigma_1\cdots\sigma_{10}\sigma_{11}^2\sigma_{10}\cdots,\sigma_1
    \hbox{\ and\ }
    \Delta(\sigmaA,\dots,\sigmaD)^2,$$ 
    namely 
    $S_1\cdots S_{10}S_{11}^2S_{10}\cdots S_1$ and $\Delta(\SA,\dots,\SD)^2$,
    coincide.  
    Lemma~\ref{LemSpecialPaths} shows that both underlying paths 
    are homotopic, rel endpoints, to  
    the positive circular arc in $\BB^1(\rho,\tau)$
    that starts at~$\sigma$ and 
    subtends angle $\pi/3$ around its center~$\rho$.

    \eqref{ItemD6isI6}
    Lemma~\ref{LemSpecialPaths} shows that 
    both $(\sigma_1\cdots\sigma_{11})^{6}$ and
    $(\sigma_{11}\cdots\sigma_1)^{6}$ have the same underlying
    path, namely  the positive semicircular arc 
    in $\BB^1(\rho,\tau)$, starting at~$\sigma$ and centered at~$\rho$.
    And one can check that their underlying elements 
    $(S_1\cdots S_{11})^6$ and $(S_{11}\cdots S_1)^6$ of~$\PG$
    are equal.  
    
    Because $\sigma_0,\dots,\sigma_{11}$ satisfy \eqref{EqEqualityOfIs} and
    the $\Atilde_{11}$ Artin relations,
    theorem~\ref{ThmBraidGroups}\eqref{ItemAdjoinInftyRelations} shows that
    $I\sigma_{k}I^{-1}=\sigma_{k-1}$
    for all~$k$.  And similarly with $D$ in place of~$I$.   
    For $k\neq0$ in the final relation, the $A_{11}$ Artin relations
    satisfied by~$\sigma_1,\dots,\sigma_{11}$ are enough: this
    is one of the standard properties of the fundamental element
    of $\Br_{12}$.
    To check that it also holds for $k=0$, 
    one can just observe that
    $\sigma_k\mapsto\sigma_{12-k}$ 
    and conjugation by $\Delta(\sigma_1,\dots,\sigma_{11})$
    are automorphisms of~$J$
    that 
    agree on a set of generators (since \eqref{EqEqualityOfIs}
    allows
    $\sigma_0$ to be expressed in terms of $\sigma_1,\dots,\sigma_{11}$).
    %
%
\end{proof}

\section{Proof of the main theorem}
\label{SecMain}

\noindent
We will find new relations in~$G$
by using the previous section to write down generators for the local group
at~$\sigma$, defined in section~\ref{subsec-orbifold-fundamental-group}, 
as words in the meridians 
$\sigma_0,\dots,\sigma_{11},\sigmaA,\dots,\sigmaD$.  Because the
$\PG$-stabilizer of $\sigma$ lies inside the $L_3(3)\semidirect2$ stabilizing~$\tau$,
conjugation by these words must permute the point- and line-meridians,
ie the $26$ Artin generators of~$G$.
The new relations express this conjugation action.
We remark that we may speak of ``the'' local group at~$\sigma$,
even though
we have considered $\sigma$ as a point of two different orbifolds,
$(U-\H)/\PGDMsw$ and $(\BB^{13}-\H)/\PG$.  Their local groups at
$\sigma$ coincide because 
the full 
$\PG$-stabilizer of~$\sigma$
preserves~$\BDM$, hence $U$.  
In the next lemma, $I$ means the ``increasing'' product
$\sigma_j\sigma_{j+1}\cdots\sigma_{j+10}$, which is independent
of~$j$ by lemma~\ref{LemSpecialPaths}. 

\begin{lemma}
    \label{LemLocalGroupAtSigma}
    The local group at $\sigma$ is  dihedral
    of order~$24$, generated by 
    \begin{center}
        \begin{tabular}{cl}
            $I\cdot \Delta(\sigmaA,\dots,\sigmaD)^{-1}$%
            &of order~$12$ and
            \\
            $I^6\cdot\Delta(\sigma_1,\dots,\sigma_{11})^{-1}$%
            &of order~$2$.
        \end{tabular}
    \end{center}
    The second inverts the first.
\end{lemma}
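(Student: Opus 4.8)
The plan is to identify the local group at $\sigma$ with the $\PG$-stabilizer of $\sigma$, and then to recognize the two displayed words as exactly the elements of the local group lying over the two generators of that stabilizer produced in lemma~\ref{LemStabilizerOfSigma}. First I would recall, from the discussion of local groups in section~\ref{subsec-orbifold-fundamental-group} together with the remark opening this section, that the local group at $\sigma$ is the set of $(\gamma,g)\in J$ with $\gamma$ null-homotopic, that $(\gamma,g)\mapsto g$ restricts to an isomorphism from this local group onto the $\PG$-stabilizer of~$\sigma$, and that this local group is the same whether computed in $(U-\H)/\PGDMsw$ or in $(\BB^{13}-\H)/\PG$, since the full $\PG$-stabilizer of~$\sigma$ preserves~$\BDM$ and hence~$U$. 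By lemma~\ref{LemStabilizerOfSigma} the stabilizer is dihedral of order~$24$, generated by $a:=S_1S_2\cdots S_{11}\cdot\Delta(\SA,\dots,\SD)^{-1}$ of order~$12$ and $b:=(S_1S_2\cdots S_{11})^6\cdot\Delta(S_1,\dots,S_{11})^{-1}$ of order~$2$, with $b$ inverting~$a$, and every element of it fixes $\BB^1(\rho,\tau)$ pointwise. Thus it will be enough to show that $I\cdot\Delta(\sigmaA,\dots,\sigmaD)^{-1}$ and $I^6\cdot\Delta(\sigma_1,\dots,\sigma_{11})^{-1}$ lie in the local group and project to $a$ and~$b$ respectively; the three assertions of the lemma then follow by transporting the corresponding facts about $a,b$ through the isomorphism.

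For the first word I would use lemma~\ref{LemSpecialPaths}\eqref{ItemIncreasingPath} and~\eqref{ItemFundamentalPathAtoD}, which represent $I=(\gamma,S_1\cdots S_{11})$ and $\Delta(\sigmaA,\dots,\sigmaD)=(\gamma,\Delta(\SA,\dots,\SD))$ by pairs with the \emph{same} underlying path~$\gamma$, the positively-directed $\pi/6$-arc in $\BB^1(\rho,\tau)$ based at~$\sigma$. Applying the group law of~$J$ and the inversion formula of section~\ref{subsec-orbifold-fundamental-group}, $I\cdot\Delta(\sigmaA,\dots,\sigmaD)^{-1}$ is represented by the pair whose underlying $\PG$-element is~$a$ and whose underlying path is $\gamma$ followed by $a\circ\reverse(\gamma)$. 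Because $a$ fixes $\BB^1(\rho,\tau)$ pointwise and $\gamma$ lies there, $a\circ\reverse(\gamma)=\reverse(\gamma)$, so this path is $\gamma$ followed by its own reverse and hence null-homotopic. Therefore $I\cdot\Delta(\sigmaA,\dots,\sigmaD)^{-1}$ belongs to the local group and is the element lying over~$a$; in particular it has order~$12$.

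For the second word the argument is identical once the two underlying paths are matched. Iterating lemma~\ref{LemSpecialPaths}\eqref{ItemIncreasingPath} and using that $S_1\cdots S_{11}$ rotates $\BB^1(\rho,\tau)$ by $+\pi/6$ about~$\rho$ (lemma~\ref{LemSpecialWordsInSs}\eqref{ItemIncreasingDecreasingSs}), one sees that $I^6$ is represented by a pair with underlying $\PG$-element $(S_1\cdots S_{11})^6$ and underlying path the positively-directed arc $\eta$ from~$\sigma$ ending at the half-turn image of~$\sigma$ about~$\rho$; and by lemma~\ref{LemSpecialPaths}\eqref{ItemFundamentalPath1to11} together with lemma~\ref{LemSpecialWordsInSs}\eqref{ItemDeltaOfSs}, $\Delta(\sigma_1,\dots,\sigma_{11})$ is represented by $(\eta,\Delta(S_1,\dots,S_{11}))$ with the same~$\eta$. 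Repeating the computation of the previous paragraph with $b$ and~$\eta$ in place of $a$ and~$\gamma$ shows that $I^6\cdot\Delta(\sigma_1,\dots,\sigma_{11})^{-1}$ lies in the local group and is the element over~$b$, of order~$2$. Since $\{a,b\}$ generates the order-$24$ dihedral stabilizer with $b$ inverting~$a$, the isomorphism between the local group and the stabilizer forces these two words to generate the local group with the stated orders and with the second inverting the first.

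The only step I expect to require genuine care is the matching of underlying paths — that $I$ and $\Delta(\sigmaA,\dots,\sigmaD)$ share the path~$\gamma$, and that $I^6$ and $\Delta(\sigma_1,\dots,\sigma_{11})$ share the path~$\eta$ — and this rests entirely on lemma~\ref{LemSpecialPaths}. Granting that, the remainder is routine bookkeeping with the $\piorb$ group law and the single geometric input that $\PG$-stabilizer elements of~$\sigma$ act trivially on $\BB^1(\rho,\tau)$.
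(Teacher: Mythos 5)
Your argument is correct and follows essentially the same route as the paper: represent $I$ and $\Delta(\sigmaA,\dots,\sigmaD)$ (resp.\ $I^6$ and $\Delta(\sigma_1,\dots,\sigma_{11})$) by pairs sharing an underlying arc, so the quotient has a null-homotopic path and hence lies in the local group, then match its $\PG$-component with the corresponding generator from lemma~\ref{LemStabilizerOfSigma}. One small observation: you correctly take $\eta$ to subtend angle $\pi$ (the arc to the half-turn image of $\sigma$), which is what lemma~\ref{LemSpecialWordsInSs}\eqref{ItemDeltaOfSs} forces; the stated $\pi/2$ for the arc $\delta$ in lemma~\ref{LemSpecialPaths} appears to be a typo for $\pi$, and your computation implicitly corrects it.
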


\begin{proof}
    Parts \eqref{ItemIncreasingPath} and
    \eqref{ItemFundamentalPathAtoD} of
    Lemma~\ref{LemSpecialPaths} shows that the path $\gamma$ (defined
    there) underlies $I=\sigma_1\cdots\sigma_{11}$ 
    and also $\Delta(\sigmaA,\dots,\sigmaD)$.  Furthermore,
    $\gamma$ lies in  $\BB^1(\rho,\tau)$, on which
    $S_1\cdots S_{11}$ acts by rotating
    $\gamma(0)$ to $\gamma(1)$.  Therefore the path
    underlying
    $I\cdot\Delta(\sigmaA,\dots,\sigmaD)^{-1}$
    is homotopic to  $\gamma$ 
    followed by the reverse of~$\gamma$.
    So 
    $I\cdot\Delta(\sigmaA,\dots,\sigmaD)^{-1}$
    represents the same element of~$J$ as
    $$
    \bigl(\hbox{the constant path at~$\sigma$},\,
    S_1S_2\cdots S_{11}\cdot\Delta(\SA,\dots,\SD)^{-1}\bigr)
    $$
    This
    gets identified with the first element of $\PG$
    listed in lemma~\ref{LemStabilizerOfSigma},
    under the correspondence between the $\PG$-stabilizer
    and the local group of $\sigma$.
    A similar argument using parts 
    \eqref{ItemIncreasingPath} and \eqref{ItemFundamentalPath1to11}
    of lemma~\ref{LemSpecialPaths} identifies
    $I^6\cdot\Delta(\sigma_1,\dots,\sigma_{11})^{-1}$
    with the second element listed there.
\end{proof}

We know from 
\cite{AB-26gens}
that the point- and line-meridians generate~$G$, and from
\cite[Thm.\ 4.4]{Basak-bimonster-2} that they satisfy
the Artin relations of $P^2\F_3$.  Therefore
$G$ is a quotient of $\Art(P^2\F_3)$.
But for current purposes it
is cleaner to express it as a quotient of
$\Art(P^2\F_3)\semidirect\Aut(P^2\F_3)$, where
the Artin generators map to 
the point-
and line-meridians, and $\Aut(P^2\F_3)=L_3(3)\semidirect2$ is identified 
with the local group at~$\tau$.

\begin{theorem}[New relations in~$G$]
    \label{ThmNewRelations}
    The orbifold fundamental group
    $$G=\piorb\bigl((\BB^{13}-\H)/\PG,\,\tau\bigr)$$
    is the quotient
    of $\Art(P^2\F_3)\semidirect\Aut(P^2\F_3)$ by the following
    relations and possibly some additional (presently
    unknown) relations.

    Consider any $\Atilde_{11}A_4$ subdiagram of $P^2\F_3$, and 
    any labeling of the nodes of $\Atilde_{11}$ by 
    $\tau_0,\dots,\tau_{11}$, cyclically around it
    in either direction, and either labeling of the
    nodes of $A_4$ by $\tauA,\dots,\tauD$ along it.
    Then
    \begin{enumerate}
        \item
            \label{ItemIRelationInG}
            $\tau_1\tau_2\cdots \tau_{11}\cdot\Delta(\tauA,\dots,\tauD)^{-1}$ equals
            the element of $\Aut(P^2\F_3)$ that permutes the
            nodes of the $\Atilde_{11}$ by $\tau_j\mapsto \tau_{j+1}$, and those of
            the $A_4$ by $\tauA\leftrightarrow\tauD$ and $\tauB\leftrightarrow\tauC$.
        \item
            \label{ItemIto6thRelationInG}
            $(\tau_1\tau_2\cdots \tau_{11})^{6}\cdot\Delta(\tau_1,\dots,\tau_{11})^{-1}$
            equals the element of $\Aut(P^2\F_3)$ that permutes the nodes
            of the $\Atilde_{11}$ by $\tau_j\mapsto \tau_{6-j}$, and fixes each
            of $\tauA,\dots,\tauD$.
    \end{enumerate}
\end{theorem}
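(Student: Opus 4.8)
The plan is to transport relations~\eqref{ItemIRelationInG} and~\eqref{ItemIto6thRelationInG} to the basepoint $\sigma$, where theorem~\ref{ThmPi1ofUmodStabilizer} together with lemmas~\ref{LemStabilizerOfSigma} and~\ref{LemLocalGroupAtSigma} already identifies their left-hand sides with explicit elements of the $\PG$-stabilizer of $\sigma$, and then to recognize those elements as the named automorphisms of $P^2\F_3$ by computing how they permute the point- and line-roots. Throughout I use the elementary fact, immediate from the definition of a meridian and the group law in $\piorb$, that conjugation by a local-group element $(c_\tau,g)$ (with $c_\tau$ the constant path at $\tau$, and $g$ fixing $\tau$) sends $M_{\tau,s}$ to $M_{\tau,g(s)}$; since a meridian depends on its root only through that root's mirror, the local group $\Aut(P^2\F_3)=L_3(3)\semidirect2$ at $\tau$ acts on the point- and line-meridians exactly as it acts on the point- and line-roots.

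By this remark, conjugating~\eqref{ItemIRelationInG} or~\eqref{ItemIto6thRelationInG} for one labelled subdiagram by an element of $\Aut(P^2\F_3)\sset G$ produces the corresponding relation for the image labelled subdiagram. Since $\Aut(P^2\F_3)$ is transitive on the $A_4$ subdiagrams, and the stabilizer $D_{24}$ of the standard $A_4$ is transitive on the $12$-gon flags, every labelled $\Atilde_{11}A_4$ subdiagram is $\Aut(P^2\F_3)$-equivalent to the standard one $s_0,\dots,s_{11};\sA,\dots,\sD$ of section~\ref{SecChange}, or to it with the orientation of the $A_4$ reversed; in the latter case one checks, using the reversal-invariance $\Delta(\sigmaA,\dots,\sigmaD)=\Delta(\sigmaD,\dots,\sigmaA)$ of the fundamental element and the evident symmetry of the named $A_4$-permutation, that the resulting pair of relations is identical to the pair for the standard labelling. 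So it suffices to prove~\eqref{ItemIRelationInG} and~\eqref{ItemIto6thRelationInG} for the standard labelled subdiagram. For it, theorem~\ref{ThmBasepointChange} sends the left side of~\eqref{ItemIRelationInG} to $\sigma_1\cdots\sigma_{11}\cdot\Delta(\sigmaA,\dots,\sigmaD)^{-1}$ and that of~\eqref{ItemIto6thRelationInG} to $(\sigma_1\cdots\sigma_{11})^6\cdot\Delta(\sigma_1,\dots,\sigma_{11})^{-1}$; lemma~\ref{LemLocalGroupAtSigma} identifies these with the local-group elements $(c_\sigma,g_1)$ and $(c_\sigma,g_2)$, where $g_1=S_1\cdots S_{11}\cdot\Delta(\SA,\dots,\SD)^{-1}$ and $g_2=(S_1\cdots S_{11})^6\cdot\Delta(S_1,\dots,S_{11})^{-1}$ are the generators of the $\PG$-stabilizer of $\sigma$ from lemma~\ref{LemStabilizerOfSigma}. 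That stabilizer fixes $\BB^1(\rho,\tau)\supset\overline{\tau\sigma}$ pointwise, so $g_1$ and $g_2$ fix $\tau$ and fix the geodesic $\overline{\tau\sigma}$; hence the change-of-basepoint isomorphism carries $(c_\sigma,g_j)$ back to $(c_\tau,g_j)$, and in $G$ the left sides of~\eqref{ItemIRelationInG} and~\eqref{ItemIto6thRelationInG} equal the elements $g_1$ and $g_2$ of the local group $\Aut(P^2\F_3)$.

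It remains to identify $g_1$ and $g_2$ as automorphisms. The conjugation rules I would use are: $S_1\cdots S_{11}$ carries the mirror $s_k^\perp$ to $s_{k+1}^\perp$ and $\Delta(S_1,\dots,S_{11})$ carries $s_k^\perp$ to $s_{12-k}^\perp$ for all $k=0,\dots,11$, these following from the final display of theorem~\ref{ThmPi1ofUmodStabilizer} by passing to underlying isometries; $\Delta(\SA,\dots,\SD)$ interchanges $\sA^\perp\leftrightarrow\sD^\perp$ and $\sB^\perp\leftrightarrow\sC^\perp$, by the standard behaviour of the fundamental element of an Artin group of type $A_4$, which $\SA,\dots,\SD$ inherit from the $A_4$ braid relations; and $\LDM\perp L_4^{(1)}$, so each of $S_0,\dots,S_{11}$ fixes $\sA^\perp,\dots,\sD^\perp$ and each of $\SA,\dots,\SD$ fixes $s_0^\perp,\dots,s_{11}^\perp$. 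Composing: $g_1$ sends $s_k^\perp\mapsto s_{k+1}^\perp$ and interchanges $\sA^\perp\leftrightarrow\sD^\perp$, $\sB^\perp\leftrightarrow\sC^\perp$; while $g_2$ fixes each of $\sA^\perp,\dots,\sD^\perp$ and sends $s_k^\perp\mapsto s_{6-k}^\perp$. Both therefore preserve $\{\sA^\perp,\dots,\sD^\perp\}$, hence both lie in its $\Aut(P^2\F_3)$-stabilizer $D_{24}$, which acts faithfully on $\{s_0^\perp,\dots,s_{11}^\perp\}$; so each is the unique element of $\Aut(P^2\F_3)$ acting as just computed, that is, the automorphism named in~\eqref{ItemIRelationInG} resp.~\eqref{ItemIto6thRelationInG}.

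The genuinely delicate point is not any single step but the consistency of conventions: one must verify that the ``positive'' rotations and the meridian orientations built into theorem~\ref{ThmPi1ofUmodStabilizer} really produce the shifts $s_k\mapsto s_{k+1}$ and $s_k\mapsto s_{6-k}$ and the $A_4$-flip with exactly the directions asserted, and that the computed action on the $16$ subdiagram nodes pins down the element of $\Aut(P^2\F_3)$ — which is precisely where the faithful $D_{24}$-action on the $12$-gon mirrors is used.
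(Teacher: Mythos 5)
Your proof is correct and follows essentially the same route as the paper's: transfer the words to the basepoint~$\sigma$ via theorem~\ref{ThmBasepointChange}, use lemma~\ref{LemLocalGroupAtSigma} to recognize them as local-group elements, transfer back to~$\tau$ (using that $\sigma$'s $\PG$-stabilizer fixes $\overline{\tau\sigma}$ pointwise), and then determine which automorphism of $P^2\F_3$ each represents. The one place you diverge slightly is the last step: the paper computes the conjugation action directly on the $16$ meridians from the final display of theorem~\ref{ThmPi1ofUmodStabilizer} and invokes triviality of the pointwise stabilizer of an $\Atilde_{11}A_4$ subdiagram, whereas you pass to the underlying isometries $g_1, g_2\in\PG$, compute their action on roots, and pin them down inside $D_{24}$ via faithfulness on the $12$-gon. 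Both routes draw on the same source (the conjugation identities at the end of theorem~\ref{ThmPi1ofUmodStabilizer}), so the content is the same; yours makes the meridian-conjugation-by-local-group fact $(c_\tau,g)\,M_{\tau,s}\,(c_\tau,g)^{-1}=M_{\tau,g(s)}$ explicit, which the paper leaves implicit. Your handling of the two $\Aut(P^2\F_3)$-orbits of labelled subdiagrams, via the reversal-invariance of the fundamental element, matches the paper's.
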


\begin{proof}
    Recall from section~\ref{SecChange}
    the $16$ particular point-
    and line-meridians $\sigma_0,\dots,\sigma_{11},\discretionary{}{}{}\sigmaA,\dots,\sigmaD$
    based at~$\sigma$, and the corresponding meridians
    $\tau_0,\dots,\tau_{11},\tauA,\dots,\tauD$
    based at~$\tau$.  These are $16$
    of the Artin generators, forming a labeled $\Atilde_{11}A_4$ diagram as
    specified in the statement.  

    The segment $\overline{\sigma\tau}$ identifies the orbifold
    fundamental groups of $(\BB^{13}-\H)/\PG$ based at $\sigma$
    and $\tau$.  It is fixed pointwise by the $\PG$-stabilizer
    $D_{24}$ of~$\sigma$, and therefore identifies the local group
    at~$\sigma$ with 
    with a subgroup of~$L_3(3)\semidirect2$.  By theorem~\ref{ThmBasepointChange}, this
    segment also identifies each 
    of $\sigma_0,\dots,\sigma_{11},\sigmaA,\dots,\sigmaD$
    with the corresponding 
    $\tau_0,\dots,\tau_{11},\tauA,\dots,\tauD$.  Therefore
    the words in lemma~\ref{LemLocalGroupAtSigma}, with $\sigma_j$'s replaced by $\tau_j$'s,
    permute the $26$ Artin generators.  
    The pointwise $(L_3(3)\semidirect2)$-stabilizer
    of an $\Atilde_{11}A_4$ diagram is trivial.  Therefore these words'
    actions on the $26$ generators are completely determined by
    their actions on $\tau_0,\dots,\tau_{11},\tauA,\dots,\tauD$.  

    We will work out this action in the case
    of the second word; the argument
    for the first is similar.  Every word in $\tau_1,\dots,\tau_{11}$
    centralizes $\tau_A,\dots,\tau_D$, so it is enough to show that
    $(\tau_1\cdots\tau_{11})^6\Delta(\tau_1,\dots,\tau_{11})^{-1}$
    conjugates $\tau_j$ to $\tau_{6-j}$ for each $j=0,\dots,11$.
    By theorem~\ref{ThmBasepointChange}, it is enough to prove this with
    all
    $\tau_k$'s replaced by $\sigma_k$'s, which is immediate from the
    identities at the end of theorem~\ref{ThmPi1ofUmodStabilizer}.

    We have established the lemma for one particular choice
    of $\tau_0,\dots\tau_{11},\tauA,\dotsm\tauD$.
    This does not quite prove the theorem, because $L_3(3)\semidirect2$
    acts with two orbits on the set of 
    such choices.  The other orbit is
    represented by the labeling 
    $\tau_0,\dots,\tau_{11},\discretionary{}{}{}\tauD,\discretionary{}{}{}\dots,\discretionary{}{}{}\tauA$. 
    But this case follows from the first, by
    the relation
    $\Delta(\tauA,\dots,\tauD)=\Delta(\tauD,\dots,\tauA)$
    in~$\Br_5$.
\end{proof}

\begin{proof}[Proof of theorem~\ref{ThmMain}]
    Consider
    $\delta:=\tau_0\tau_1\cdots\tau_{10}(\tau_1\tau_2\cdots\tau_{11})^{-1}$,
    regarded as an element of $\Art(\Atilde_{11})\sset\Art(P^2\F_3)$.   We call it the
    ``deflation word'' for reasons that will become clear.
    An immediate consequence of 
    theorem~\ref{ThmNewRelations}\eqref{ItemIRelationInG} is that 
    $\delta$ dies in~$G$.  
    (In fact, we established this early in
    the proof of lemma~\ref{LemSpecialPaths}\eqref{ItemIncreasingPath},
    so the later parts of section~\ref{SecNeighborhood} are
    not needed for this theorem.)
    We write $\deltabar$ for the image of $\delta$ in $\Cox(\Atilde_{11})\sset\Cox(P^2\F_3)$.

    We claim that the subgroup of
    $\Cox(\Atilde_{11})\iso\Z^{11}\semidirect S_{12}$ normally
    generated
    by~$\deltabar$ is the translation subgroup $\Z^{11}$.
    This is standard: annihilating $\deltabar$ expresses $\tau_0$
    as a word in $\tau_1,\dots,\tau_{11}$, so the quotient can
    be no larger than $S_{12}$.  And the quotient is no smaller,
    because the transpositions $(1\ 2),(2\ 3),\dots,(11\ 12),(12\ 1)\in S_{12}$
    satisfy the relations of $\Cox(\Atilde_{11})$ and also the deflation
    relation.
    
    Now we can quote a result of Conway and Simons \cite{Conway-Simons},
    which depends essentially on work of Ivanov \cite{Ivanov} 
    and Norton \cite{Norton}.
    They consider the ``deflation'' of $\Cox(P^2\F_3)$, meaning the
    quotient of this group by the subgroup normally generated by
    the translation subgroups of the
    $\Cox(\Atilde_{11})$'s coming from all the $\Atilde_{11}$ subdiagrams of
    $P^2\F_3$.  They prove that this quotient is 
    the bimonster.  Because $G$ is a quotient of $\Art(P^2\F_3)$ in which
    $\delta$ dies,
    $G/S$ is a quotient of $\Cox(P^2\F_3)$ in which $\deltabar$
    dies.  By the previous paragraph, $G/S$ is a quotient of the 
    deflation of $\Cox(P^2\F_3)$, hence of  the bimonster $(M\times M)\semidirect2$.  
    Since $M$ is simple, the only possibilities for $G/S$ are
    the bimonster, $\Z/2$ and the trivial group.  The last case
    is excluded by lemma~\ref{LemGNontrivial} below.
\end{proof}

\begin{numberedremark}
    \label{RkHeckmanArgument}
    We sketch an argument of Heckman 
    \cite{Heckman}.  It lets one 
    skip most of sections \ref{SecDeligneMostow} and~\ref{SecNeighborhood}, and still 
    prove theorem~\ref{ThmMain}, although it does not establish
    any new relations in~$G$ itself.  Killing the squares of the meridians collapses
    the exact sequence \eqref{EqExactSequence}, namely
    $$
    1\to\Br_5\to J\to\piorb(\M_{12})\to1
    $$
    so that the left
    term becomes~$S_5$ and the right term~$S_{12}$.
    Because $S_5$ has no center, the extension is
    determined by the homomorphism $S_{12}\to\Out(S_5)=1$.
    Therefore $J$  becomes $S_5\times S_{12}$.
    The images of 
$\sigma_0,\dots,\sigma_{11}$ commute with $S_5$ and project
    to generators of~$S_{12}$.  Therefore 
    the subgroup of $S_5\times S_{12}$ they generate meets
    $S_5$ trivially and projects isomorphically to~$S_{12}$.  
    That is, the
    deflation relation holds for the images of
    $\sigma_0,\dots,\sigma_{11}$ in $G/S$.
    Then theorem~\ref{ThmBasepointChange} gives the same result
    for $\tau_0,\dots,\tau_{11}$, and one can quote Conway-Simons
    as in the proof above.  (Unfortunately,  Heckman's approach
    to theorem~\ref{ThmBasepointChange} had
    a gap.)
\end{numberedremark}

\begin{lemma}
    \label{LemGNontrivial}
    The quotient of $G=\piorb((\BB^{13}-\H)/\PG)$, by the subgroup~$S$
    generated by the squares of the meridians, is nontrival.
\end{lemma}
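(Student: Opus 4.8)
The plan is to exhibit a \emph{nontrivial} homomorphism $w\colon G\to\Z/2$ under which every meridian maps to the nontrivial element. Since $w$ then kills the square of every meridian, it kills the normal subgroup~$S$ generated by those squares, so it factors through~$G/S$; hence $G/S$ surjects onto~$\Z/2$ and in particular is nontrivial. (This is logically prior to, and independent of, the fact established elsewhere that $G/S$ is a quotient of the bimonster, so there is no circularity.)

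To build $w$ I would use an automorphic form. The one non-formal ingredient is the existence of a nonzero holomorphic $\PG$-automorphic form $\Phi$ on $\BB^{13}$ of some \emph{even} weight~$k$ whose divisor is supported on~$\H$ and which vanishes to \emph{odd} order along the mirrors in at least one $\PG$-orbit; such reflective forms are available (e.g.\ by a Borcherds-type product construction on~$\BB^{13}$), and recording one is the substance of the argument. Given $\Phi$, note first that its multiplier character $\chi$ takes values in~$\mu_3$: a triflection~$S$ fixes pointwise the tautological line over a generic point of its mirror while acting on the normal direction by a primitive cube root of unity, so it rescales $\Phi$ by $\omega^{\pm m}$ with $m$ the multiplicity of~$\Phi$ along that mirror; since triflections generate~$\PG$, $\chi$ lands in~$\mu_3$. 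Because $3$ is odd, $\chi=\psi^2$ for a character~$\psi$ of~$\PG$, and because $k$ is even, $\mathcal{L}^{k}=(\mathcal{L}^{k/2})^{\otimes2}$, where $\mathcal{L}$ is the automorphic line bundle. Thus $\mathcal{O}(\operatorname{div}\Phi)=\mathcal{L}^{k}\otimes L_\chi$ is the square of $M:=\mathcal{L}^{k/2}\otimes L_\psi$ on $\BB^{13}/\PG$, with $\Phi$ a section of $M^{\otimes2}$.

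The cyclic double cover $Y$ of $\BB^{13}/\PG$ determined by $M$ and~$\Phi$ is ramified along $\operatorname{div}\Phi$; after normalization it is ramified along each mirror to order equal to that multiplicity modulo~$2$, hence to order~$1$ along the mirrors where the multiplicity is odd. It is connected, because $\Phi$ vanishes to odd order along some mirror and therefore is not the square of a section of~$M$. Deleting~$\H$ yields a connected étale double cover of $(\BB^{13}-\H)/\PG$, i.e.\ an index-two subgroup of $G=\piorb\bigl((\BB^{13}-\H)/\PG\bigr)$, and the associated homomorphism $w\colon G\to\Z/2$ is the one we want: it is nontrivial, and it sends each meridian around a mirror in that orbit to the nontrivial element, since the cover unwinds small loops there. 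Orbifold points of $\BB^{13}/\PG$ of complex codimension $\geq2$, coming from non-reflection elliptic elements, are irrelevant: they do not affect $\piorb$ or the covering construction. The main obstacle is therefore concentrated entirely in the first step --- producing the reflective form $\Phi$ with even weight and odd vanishing order along mirrors; once it is in hand the rest is formal, which is why the argument is short.
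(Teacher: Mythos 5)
Your argument is essentially the paper's, reorganized slightly. The paper also exhibits a connected degree-two cover of $(\BB^{13}-\H)/\PG$ coming from a square root of an automorphic form, and deduces a surjection $G\to\Z/2$. The one point where you and the paper diverge is exactly the point you flag as ``the substance of the argument'': you posit, but do not produce, a form~$\Phi$ of even weight with divisor supported on~$\H$ and odd vanishing order along some orbit of mirrors, saying ``such reflective forms are available (e.g.\ by a Borcherds-type product construction)'' without a citation. That is the gap. The paper fills it by citing \cite[Thm.\ 7.1]{Allcock-Inventiones}: there is a holomorphic $\PG$-automorphic form $\Psi_0$ of weight~$4$, transforming by a $\Z/3$-valued character, whose divisor is $\H$ with multiplicity exactly~$1$ along every component. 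This satisfies your hypotheses on the nose (even weight $k=4$, odd vanishing order~$1$, $\mu_3$-valued multiplier), so your factorization $\chi=\psi^2$ and $\mathcal{L}^{k}=(\mathcal{L}^{k/2})^{\otimes2}$ does the rest. The paper instead passes directly to $\Psi_0^3$, which has trivial multiplier, even weight $12$, and odd vanishing order $3$, so it avoids the ``$3$ is odd, hence $\chi$ is a square'' step and lets it build the cover by hand: $Z=\{(\ell,\phi)\colon \phi^2=\Psi_0^3|_\ell\}$ as a $\Gamma$-equivariant set over $\BB^{13}-\H$, with the scalars in $\Gamma$ acting trivially because $\phi$ is homogeneous of degree $-6$, so that $Z/\PG$ is the orbifold double cover. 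That bare-hands construction sidesteps the questions about orbifold line bundles and orbifold points that you have to wave away.

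Two smaller remarks. First, the requirement that ``every meridian maps to the nontrivial element'' is superfluous: any homomorphism $G\to\Z/2$ automatically kills every square, hence kills~$S$, so a \emph{nontrivial} such homomorphism is all that is needed (which is also why your remark about which $\PG$-orbit the mirror lies in is harmless). Second, your multiplier computation implicitly uses that $\PG$ is generated by triflections; the paper records this fact (attributed to \cite{Basak-bimonster-1} and \cite{Allcock-Y555}), and you should too if you go this route.
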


\begin{proof}
    It is enough to exhibit a connected
    orbifold double cover of~$(\BB^{13}-\H)/\PG$.
    There is a
    degree~$4$ holomorphic automorphic form $\Psi_0$ for~$\PG$, whose zero locus is~$\H$
    with multiplicity~$1$ along each component \cite[Thm.\ 7.1]{Allcock-Inventiones}.
    This means: $\Psi_0$ is a holomorphic function on the preimage  $\Omega$ of 
    $\BB^{13}$ in $\C^{14}-\{0\}$, 
    homogeneous of degree~$-4$, which has a simple zero along
    each component of the preimage $\Htilde$
    of~$\H$ in $\Omega$, and no other zeros.  
    The same reference shows that
    $\Psi_0$ transforms by a character $\Gamma\to\Z/3$.

    Let $Z$ be the space of pairs $(\ell,\phi)$, where $\ell$ is the
    preimage 
    in $\C^{14}-\{0\}$ of a point of $\BB^{13}-\H$, and $\phi$ is a
    holomorphic function on $\ell$ whose square coincides with $\Psi_0^3|_{\ell}$.
    This is a degree~$2$ covering space of $\BB^{13}-\H$ in an obvious way.
    It is connected because $\Psi_0^3$ has zeros of odd order along $\Htilde$.
    The $\Gamma$-invariance of $\Psi_0^3$ yields a
    $\Gamma$-action on~$Z$, with each $g\in\Gamma$ sending
    $(\ell,\phi)$ to $(g(\ell),\phi\circ g^{-1})$.  Because $\phi$ is homogeneous
    of degree~$-6$, the scalars in $\Gamma$ act trivially, yielding
    a $\PG$-action on~$Z$ and a $\PG$-equivariant map $Z\to\BB^{13}-\H$.
    Our promised double cover of $(\BB^{13}-\H)/\PG$ is $Z/\PG$.
\end{proof}

\end{document}